\theoremstyle{plain}
\newtheorem{theorem}{Theorem}
\newtheorem{corollary}[theorem]{Corollary}
\newtheorem{definition}[theorem]{Definition}
\newtheorem{lemma}[theorem]{Lemma}
\newtheorem{proposition}[theorem]{Proposition}
\theoremstyle{definition}
\newtheorem{example}[theorem]{Example}
\theoremstyle{remark}
\newtheorem{remark}[theorem]{Remark}
\numberwithin{equation}{section}
\numberwithin{theorem}{section}
\DeclareMathOperator{\R}{\mathbb R}
\DeclareMathOperator{\cM}{\mathcal{M}}
\DeclareMathOperator{\N}{\mathbb N}
\DeclareMathOperator{\supp}{supp}
\DeclareMathOperator{\col}{col}
\DeclareMathOperator{\diam}{diam}
\DeclareMathOperator{\diag}{Diag}
\DeclareMathOperator{\loc}{loc}
\DeclareMathOperator{\dimL}{\underline{\dim}_{A}}
\DeclareMathOperator{\dimA}{\overline{\dim}_{A}}
\DeclareMathOperator{\dimqL}{\underline{\dim}_{qA}}
\DeclareMathOperator{\dimqA}{\overline{\dim}_{qA}}
\DeclareMathOperator{\dimAscottheta}{\overline{\dim}_A^{\,=\theta}}
\DeclareMathOperator{\dimLscottheta}{\underline{\dim}_A^{\,=\theta}}
\DeclareMathOperator{\dimLscotthetaj}{\underline{\dim}_A^{\,=\theta_j}}
\DeclareMathOperator{\dimAcantheta}{\overline{\dim}_{\,A}^{\,\leq\theta}}
\DeclareMathOperator{\dimLcantheta}{\underline{\dim}_{\,A}^{\,\leq\theta}}
\DeclareMathOperator{\dimAscotpsi}{\overline{\dim}_A^{\,=\psi}}
\DeclareMathOperator{\dimLscotpsi}{\underline{\dim}_A^{\,=\psi}}
\DeclareMathOperator{\dimAscotbeta}{\overline{\dim}_A^{\,=\beta}}
\DeclareMathOperator{\dimLscotbeta}{\underline{\dim}_A^{\,=\beta}}
\begin{document}
\title[Lower Assouad dimension]{Lower Assouad Dimension of Measures and
Regularity\ifdraft{\\\today}{} }
\author{Kathryn E. Hare}
\address{Kathryn E. Hare\\
Department of Pure Mathematics\\
University of Waterloo\\
Waterloo, Canada\\
N2L 3G1}
\thanks{The first author's research was supported by NSERC 2016-03719. The
second author was supported in part by NSERC 2016-03719, NSERC 2014-03154
and the University of Waterloo.}
\author{Sascha Troscheit}
\address{Sascha Troscheit\\
Department of Pure Mathematics\\
University of Waterloo\\
Waterloo, Canada\\
N2L 3G1}
\date{\today }
\subjclass[2010]{Primary 28C15; Secondary 28A80, 37C45}
\keywords{Assouad dimension, self-similar measures, self-affine measures,
uniformly perfect, local dimension, regularity dimension}

\begin{abstract}
In analogy with the lower Assouad dimensions of a set, we study the lower
Assouad dimensions of a measure. As with the upper Assouad dimensions, the
lower Assouad dimensions of a measure provide information about the extreme
local behaviour of the measure. We study the connection with other
dimensions and with regularity properties. In particular, the quasi-lower
Assouad dimension is dominated by the infimum of the measure's lower local
dimensions. Although strict inequality is possible in general, equality
holds for the class of self-similar measures of finite type. This class
includes all self-similar, equicontractive measures satisfying the open set
condition, as well as certain \textquotedblleft
overlapping\textquotedblright\ self-similar measures, such as Bernoulli
convolutions with contraction factors that are inverses of Pisot numbers.

We give lower bounds for the lower Assouad dimension for measures arising
from a Moran construction, prove that self-affine measures are uniformly
perfect and have positive lower Assouad dimension, prove that the Assouad
spectrum of a measure converges to its quasi-Assouad dimension and show that
coincidence of the upper and lower Assouad dimension of a measure does not
imply that the measure is $s$-regular.
\end{abstract}

\maketitle

\section{Introduction}

The upper and lower Assouad dimensions of a metric space provide
quantitative information about the extreme local geometry of the set. The
analogous notion of the Assouad dimensions of a measure also quantifies, in
some sense, the extreme local behaviour of the measure. These dimensions
were extensively studied by K\"{a}enm\"{a}ki et al.,\ in \cite{KL} and \cite%
{KLV}, and Fraser and Howroyd, in \cite{FH}, where they were called the
upper and lower regularity dimensions. It was shown that the upper Assouad
dimension of a measure is finite if and only if the measure is doubling,
while the lower Assouad dimension is positive if and only if the measure is
uniformly perfect. K\"{a}enm\"{a}ki et al.\ focused their investigations on
doubling measures supported on uniformly perfect complete metric spaces,
whereas Fraser and Howroyd computed the upper Assouad dimension for a large
class of examples, as well as establishing links to other notions of
regularity. As many interesting measures are not doubling, such as is
typically the case for self-similar measures that fail the open set
condition, the weaker notion of the quasi-Assouad dimension of a measure is
more appropriate and was studied in \cite{HHT}. There it was shown, for
example, that self-similar measures that are sufficiently regular (but not
necessarily satisfying the open set condition), not only have finite
quasi-upper Assouad dimension, but in fact this dimension coincides with the
maximal local dimension of the measure.

In this paper, we investigate the lower Assouad dimension for measures and
introduce the quasi-lower Assouad dimension. The (quasi-) lower Assouad
dimension of a measure is easily seen to be dominated by the (quasi-) lower
Assouad dimension of the support of the measure. It is also dominated by the
infimum of the lower local dimensions (and hence the Hausdorff dimension) of
the measure. Although these dimensions are equal for self-similar measures
satisfying the strong separation condition, in general all the
aforementioned inequalities can be strict. We give various examples to show
this. We also give an example to show that equality of the upper and lower
Assouad dimensions does not imply $s$-regularity of the measure. In analogy
with what was shown for sets in \cite{CWC} and \cite{FHHTY}, we prove that
the quasi-lower and quasi-upper Assouad dimensions of measures can be
recovered from the Assouad dimension spectrum of a measure under the
assumption that the measure is quasi-doubling, i.e., has finite quasi-upper
Assouad dimension. These results can all be found in Sections \ref{Def} and %
\ref{sp}. In the appendix, we simplify the proof given in \cite{CWC} that
the quasi-lower Assouad dimension of a doubling metric space is the limit of
the dimension spectrum and remove their assumption that the metric space is
uniformly perfect.

In Section \ref{up} we establish a lower bound on the lower Assouad
dimension for uniformly perfect measures and show that certain Moran
constructions, such as self-similar and self-affine measures, have positive
lower Assouad dimension. For these sets, we give a lower bound for the
dimension in terms of the parameters of the Moran construction. We also
calculate the (quasi-) lower Assouad dimension of Bedford-McMullen carpets.

In Section \ref{FT} we prove the equality of the quasi-lower Assouad
dimension with the infimum of the set of lower local dimensions for
self-similar measures of finite type. This class of measures includes
equicontractive, self-similar measures satisfying the open set condition, as
well as certain measures that only satisfy the weak separation condition,
such as Bernoulli convolutions with contraction factor the inverse of a
Pisot number. Our proof is constructive; we exhibit a sequence of points
such that the lower local dimension of the measure at these points tends to
the quasi-lower Assouad dimension of the measure.

A measure is said to be $L^{p}$-improving if it acts by convolution as a
bounded map from $L^{2}$ to $L^{p}$ for some $p>2$. It is known that $L^{p}$%
-improving measures have positive Hausdorff dimension, thus it is natural to
ask if they must also have positive lower Assouad dimension. In Section \ref%
{LI}, examples are given to show that even the quasi-lower Assouad dimension
of an $L^{p}$-improving measure can be zero, although its local dimensions
must be bounded away from zero. In fact, we show that there exist measures
whose Fourier transform is $p$-summable for some $p<\infty ,$ with zero
quasi-lower Assouad dimension.

\section{Definitions and basic properties of the lower Assouad type
dimensions\label{Def}}

\subsection{Assouad dimensions of sets}

Given a compact metric space $X$, we write $N_{r}(E)$ for the least number
of sets of diameter at most $r$ that are required to cover $E\subseteq X$.
Given $\delta >0,$ let 
\begin{eqnarray*}
\overline{h}(\delta ) &=&\inf \left\{ \alpha :(\exists c,c_{2}>0)(\forall
0<r\leq R^{1+\delta }\leq c_{1})\ \sup_{x\in E}N_{r}(B(x,R)\cap E)\leq
c_{2}\left( \frac{R}{r}\right) ^{\alpha }\right\} , \\
\underline{h}(\delta ) &=&\sup \left\{ \alpha :(\exists
c_{1},c_{2}>0)(\forall 0<r\leq R^{1+\delta }\leq c_{1})\ \inf_{x\in
E}N_{r}(B(x,R)\cap E)\geq c_{2}\left( \frac{R}{r}\right) ^{\alpha }\right\} .
\end{eqnarray*}%
The upper Assouad and lower Assouad dimensions of $E$ are given by 
\begin{equation*}
\dimA\,E=\overline{h}(0),\;\dimL E=\underline{h}(0),
\end{equation*}%
while the quasi-upper Assouad and quasi-lower Assouad dimensions are given
by 
\begin{equation*}
\dimqA\,E=\lim_{\delta \rightarrow 0}\overline{h}(\delta )\text{, }\dimqL %
E=\lim_{\delta \rightarrow 0}\underline{h}(\delta ).
\end{equation*}

\subsection{Assouad dimensions of measures}

By a measure we will mean a Borel probability measure on $X$ with compact
support. The analogue of the upper Assouad and lower Assouad dimensions for
measures was studied in \cite{FH}, \cite{KL} and \cite{KLV} (where they were
called upper and lower regularity dimensions). The analogue of the
quasi-upper Assouad dimension for measures was introduced in \cite{HHT}.
This paper is primarily concerned with the (quasi)-lower Assouad dimension
for measures.

Given a measure $\mu $ and $\delta \geq 0$, set 
\begin{equation*}
\overline{H}(\delta ) =\inf \left\{ s:(\exists c_{1},c_{2}>0)(\forall
0<r\leq R^{1+\delta }\leq c_{1})\sup_{x\in \supp\mu }\frac{\mu (B(x,R))}{\mu
(B(x,r))}\leq c_{2}\left( \frac{R}{r}\right) ^{s}\right\}
\end{equation*}
and 
\begin{equation*}
\underline{H}(\delta ) =\sup \left\{ s:(\exists c_{1},c_{2}>0)(\forall
0<r\leq R^{1+\delta }\leq c_{1})\inf_{x\in \supp\mu }\frac{\mu (B(x,R))}{\mu
(B(x,r))}\geq c_{2}\left( \frac{R}{r}\right) ^{s}\right\} .
\end{equation*}

\begin{definition}
The \textbf{upper Assouad }and \textbf{lower Assouad dimensions} of $\mu $
are given by%
\begin{equation*}
\dimA\,\mu =\overline{H}(0),\;\dimL\,\mu =\underline{H}(0).
\end{equation*}%
The \textbf{quasi-upper Assouad }and \textbf{quasi-lower Assouad dimension}
of $\mu $ are given by%
\begin{equation*}
\dimqA\mu =\lim_{\delta \rightarrow 0}\overline{H}(\delta ),\; \dimqL\mu
=\lim_{\delta \rightarrow 0}\underline{H}(\delta ).
\end{equation*}
\end{definition}

\begin{remark}
We note that these dimensions are known under various names and many
different notations are in common use. The upper Assouad dimension is often
referred to as \emph{the} Assouad dimension, the lower Assouad dimension
sometimes simply as lower dimension, and the measure theoretic versions as
the upper and lower regularity dimensions. We have opted to use a bar to
denote upper or lower Assouad dimension instead of $\dim _{A}$ and $\dim
_{L} $, (as $\dim _{L}$ is sometimes used to refer to the Lyapunov dimension
of a measure).
\end{remark}

\subsection{Relationships between these dimensions}

It is clear from the definitions that 
\begin{equation*}
0\leq \dimL E\leq \dimqL E\leq \dimqA E\leq \dimA E\leq \infty
\end{equation*}%
and 
\begin{equation*}
0\leq \dimL\mu \leq \dimqL\mu \leq \dimqA\mu \leq \dimA\mu \leq \infty.
\end{equation*}%
It was shown in \cite{FH} and \cite{HHT} that 
\begin{equation*}
\dimA\mu \geq \dimA\supp\mu \quad\text{and}\quad\;\dimqA\mu \geq \dimqA\supp%
\mu .\text{ }
\end{equation*}%
It is known that $\dimA\mu <\infty $ if and only if $\mu $ is doubling,
meaning there is a constant $C>0$ such that 
\begin{equation}
\mu (B(x,R))\geq C\mu (B(x,2R))\text{ for all }x,R.  \label{doubling}
\end{equation}%
See \cite{FH} for a proof.

Recall that the lower local dimension of $\mu $ at $x$ is defined as%
\begin{equation*}
\underline{\dim }_{\loc}\mu (x)=\liminf_{r\rightarrow 0}\frac{\log \mu
(B(x,r))}{\log r},
\end{equation*}%
with the upper local dimension, $\overline{\dim }_{\loc}\mu (x),$ defined
similarly but with $\lim \sup $ replacing $\liminf $. Fraser and Howroyd in 
\cite{FH} also showed that%
\begin{equation*}
\dimqA\mu \geq \sup_{x\in \supp\mu }\{\overline{\dim }_{\loc}\mu (x)\}.
\end{equation*}

Similar relations hold for the (quasi-)lower Assouad dimensions.

\begin{proposition}
(i) If $\mu $ is a doubling measure, then 
\begin{equation*}
\dimL\mu \;\leq\;\dimL\supp\mu \quad\text{ and }\quad\dimqL\mu \;\leq\;\dimqL%
\supp\mu .
\end{equation*}

(ii) For any measure $\mu ,$ 
\begin{equation*}
\dimL\mu \;\leq \;\dimqL\mu \;\leq \;\inf_{x\in \supp\mu }\{\underline{\dim }%
_{\loc}\mu (x)\}\;\leq \;\dim _{H}\mu .
\end{equation*}

(iii) If $\mu $ is a self-similar measure associated with an IFS that
satisfies the strong separation condition, then 
\begin{equation*}
\dimL\mu =\inf_{x}\{\dim _{\loc}\mu (x)\}.
\end{equation*}
\end{proposition}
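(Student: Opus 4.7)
The plan is to handle the three parts in sequence, each via a distinct technique.

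For (i), my approach is a covering argument that combines the lower Assouad hypothesis on $\mu$ with doubling. Fix $s<\dimL\mu$, so there exist $c_1,c_2>0$ with $\mu(B(y,r))\le c_2^{-1}(r/R)^s\mu(B(y,R))$ for every $y\in\supp\mu$ and every $0<r\le R\le c_1$. Given $x\in\supp\mu$, I cover $B(x,R)\cap\supp\mu$ by $N:=N_r(B(x,R)\cap\supp\mu)$ sets of diameter at most $r$, discard any that miss $\supp\mu$, and enclose each remaining one in a ball $B(y_i,r)$ with $y_i\in\supp\mu$. Since $B(y_i,R)\subseteq B(x,2R)$, doubling yields $\mu(B(y_i,R))\le C_D\,\mu(B(x,R))$, and summing gives
\[
\mu(B(x,R))\le\sum_{i=1}^{N}\mu(B(y_i,r))\le Nc_2^{-1}C_D(r/R)^s\mu(B(x,R)).
\]
Dividing by the positive quantity $\mu(B(x,R))$ produces $N\gtrsim(R/r)^s$, whence $\dimL\supp\mu\ge s$. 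Letting $s\uparrow\dimL\mu$ proves the first inequality, and restricting to $r\le R^{1+\delta}$ adapts the argument verbatim to the quasi analogue.

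For (ii), the first inequality $\dimL\mu\le\dimqL\mu$ is immediate from the monotonicity of $\underline H(\delta)$ in $\delta$. For $\dimqL\mu\le\underline{\dim}_{\loc}\mu(x_0)$, I fix $x_0\in\supp\mu$, write $s_0:=\underline{\dim}_{\loc}\mu(x_0)$, and take $s>s_0$ with $\delta>0$. I choose $\epsilon>0$ small enough that $\epsilon(2+\delta)<\delta(s-s_0)$, extract $r_n\downarrow 0$ realising the liminf so that $\mu(B(x_0,r_n))\ge r_n^{s_0+\epsilon}$, and set $R_n:=r_n^{1/(1+\delta)}$, so $R_n^{1+\delta}=r_n$. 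Combining this with the uniform bound $\mu(B(x_0,R))\le R^{s_0-\epsilon}$ valid for small $R$, a short logarithmic computation gives
\[
\frac{\mu(B(x_0,R_n))/\mu(B(x_0,r_n))}{(R_n/r_n)^s}\le R_n^{\delta(s-s_0)-\epsilon(2+\delta)}\to 0\quad\text{as }n\to\infty,
\]
which contradicts any purported lower bound $\mu(B(x_0,R_n))/\mu(B(x_0,r_n))\ge c_2(R_n/r_n)^s$. Hence $\underline H(\delta)\le s_0$ for every $\delta>0$, so $\dimqL\mu\le s_0$; taking the infimum over $x_0$ gives the second inequality. The final inequality follows from the standard identification of $\dim_H\mu$ with the $\mu$-essential infimum of $\underline{\dim}_{\loc}\mu$, which dominates the pointwise infimum over $\supp\mu$.

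For (iii), I combine (ii) with the classical formula that under SSC, $\dim_{\loc}\mu(x)$ exists everywhere on $\supp\mu$ and $\inf_x\dim_{\loc}\mu(x)=s_0:=\min_i\log p_i/\log r_i$, the infimum being attained at the fixed point of the minimising map. It remains to show $\dimL\mu\ge s_0$. Under SSC the cylinders are uniformly separated, so each $x=\pi(\omega)\in\supp\mu$ admits, for every small $r>0$, a canonical cylinder index $n(r)$ with $r_{\omega|n(r)}\asymp r$ and $\mu(B(x,r))\asymp p_{\omega|n(r)}$. For $0<r\le R$ small,
\[
\frac{\mu(B(x,R))}{\mu(B(x,r))}\asymp\prod_{k=n(R)+1}^{n(r)}p_{\omega_k}^{-1}\quad\text{and}\quad\left(\frac{R}{r}\right)^{s_0}\asymp\prod_{k=n(R)+1}^{n(r)}r_{\omega_k}^{-s_0}.
\]
The defining inequality $r_i^{s_0}\ge p_i$ for every $i$ then gives $\mu(B(x,R))/\mu(B(x,r))\gtrsim(R/r)^{s_0}$ uniformly in $x$, so $\dimL\mu\ge s_0$.

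The delicate step is the logarithmic calibration in (ii): $\epsilon$ must be chosen as a function of both $\delta$ and $s-s_0$ so that the exponent $\delta(s-s_0)-\epsilon(2+\delta)$ is strictly positive and produces genuine decay along the sequence $R_n\downarrow 0$. Once that calibration is in place, parts (i) and (iii) reduce to routine covering and cylinder counting, relying only on doubling and SSC separation respectively.
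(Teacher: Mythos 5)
Your proof is correct, and it is more self-contained than the paper's, which handles each part by citation: the $\dimL$ inequality in (i) is credited to \cite{KL}, part (ii) to the argument in \cite[Proposition 2.4]{HHT}, and part (iii) to \cite[Theorem 2.4]{FH}. Your arguments fill in those details.

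For part (i), the paper and you work in opposite logical directions. The paper takes $t=\dimqL\supp\mu$, extracts $x_i,R_i,r_i$ with $N_{r_i}(B(x_i,R_i)\cap\supp\mu)\le(R_i/r_i)^{t+\varepsilon}$, uses doubling once to pass to $B(x_i,2R_i)$, bounds $\mu(B(x_i,2R_i))$ by the covering number times $\max_{y}\mu(B(y,r_i))$, and deduces that the measure ratio at the maximizing $y_i$ is small, giving $\dimqL\mu\le t$. You instead assume $s<\dimL\mu$, cover $B(x,R)\cap\supp\mu$ by $N$ balls $B(y_i,r)$ with $y_i\in B(x,R)\cap\supp\mu$, sum the measures rather than taking a max, and use doubling to compare $\mu(B(y_i,R))$ with $\mu(B(x,R))$, concluding $N\gtrsim(R/r)^s$. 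These are contrapositives of each other; each is equally routine. Note that the inclusion you need is $B(y_i,R)\subseteq B(x,2R)$, which is only valid because you may choose $y_i$ to be a point of the cover set lying in $B(x,R)\cap\supp\mu$ (not merely in $\supp\mu$) — you state this implicitly and it is fine, but the choice of $y_i$ should be made explicit, since with an arbitrary $y_i\in\supp\mu$ in the covering set you would need $B(x,3R)$ and an extra doubling application. Your direct computation for (ii) is the content of \cite[Proposition 2.4]{HHT} and is correctly calibrated; the exponent $\delta(s-s_0)-\epsilon(2+\delta)$ works out exactly as you claim. For (iii), your cylinder argument is the standard one and matches what \cite[Theorem 2.4]{FH} does for the upper version; the minor overstatement that $\dim_{\loc}\mu(x)$ exists everywhere under SSC is harmless, since you only use the lower local dimension and the inequality $r_i^{s_0}\ge p_i$.
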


\begin{proof}
(i) The fact that $\dimL\mu \;\leq \;\dimL\supp\mu $ was observed in \cite%
{KL}. To see that $\dimqL\mu \;\leq \;\dimqL\supp\mu ,$ let $t=\dimqL\supp%
\mu $ and $C$ be the doubling constant of (\ref{doubling}). For any $%
\varepsilon >0$ and suitable $\delta >0$, there are $x_{i}\in \supp\mu $, $%
R_{i}\rightarrow 0$ and $r_{i}\leq R_{i}^{1+\delta }$ such that $%
N_{r_{i}}(B(x_{i},R_{i})\cap \supp\mu )\leq (R_{i}/r_{i})^{t+\varepsilon }$.
Together with the doubling property, this implies 
\begin{eqnarray*}
\mu (B(x_{i},2R_{i})\cap \supp\mu ) &\leq &C^{-1}\mu (B(x_{i},R_{i})\cap %
\supp\mu )\vphantom{\max_{y\in B(x_{i},R_{i})}} \\
&\leq &C^{-1}N_{r_{i}}(B(x_{i},R_{i})\cap \supp\mu )\max_{y\in
B(x_{i},R_{i})}\mu (B(y,r_{i})\cap \supp\mu ) \\
&\leq &C^{-1}(R_{i}/r_{i})^{t+\varepsilon }\mu (B(y_{i},r_{i})\cap \supp\mu )
\end{eqnarray*}%
for a suitable $y_{i}\in B(x_{i},R_{i})$. Now $B(y_{i},R_{i})\subseteq
B(x_{i},2R_{i})$ and thus%
\begin{equation*}
\frac{\mu (B(y_{i},R_{i})\cap \supp\mu )}{\mu (B(y_{i},r_{i})\cap \supp\mu )}%
\leq \frac{\mu (B(x_{i},2R_{i})\cap \supp\mu )}{\mu (B(y_{i},r_{i})\cap \supp%
\mu )}\leq C^{-1}(R_{i}/r_{i})^{t+\varepsilon }.
\end{equation*}%
That suffices to show $\dimqL\mu \leq t$. A similar argument shows $\dimL\mu
\leq \dimL\supp\mu .$

(ii) The only new statement here is the inequality $\dimqL\mu \leq \inf_{x}\{%
\underline{\dim }_{\loc}\mu (x)\}$ and this follows in the same manner as 
\cite[Proposition 2.4]{HHT}.

(iii) The proof of this is essentially the same as given in \cite[Theorem 2.4%
]{FH} for the fact that $\dimA\mu =\sup_{x}\{\dim _{\loc}\mu (x)\}$.
\end{proof}

\begin{remark}
In \cite[Proposition 4.2]{HHT} it is shown that if $\dimqA\mu <\infty ,$
then for each $\varepsilon >0$ there is a constant $c>0$ such that $\mu
(B(x,R))\geq cR^{\varepsilon }\mu (B(x,2R))$ for all $x,R$. The reader can
check that this weaker condition suffices to ensure $\dimqL\mu \leq \dimqL%
\supp\mu .$
\end{remark}

\begin{remark}
Strict inequalities are possible between all these dimensions. Indeed, in 
\cite[Example 2.3]{HHT} it is explained how to construct examples with $%
\dimqA\supp\mu <\dimA\supp\mu <\dimqA\mu <\dimA\mu $ and $\dimqA\supp\mu <%
\dimqA\mu <\dimA\supp\mu <\dimA\mu $. It is easy to modify these to produce
analogous examples for the lower Assouad dimensions. In particular, one can
have $\dimL\mu =0,\dimA\mu =\infty ,$ but $0<\dimqL\mu <\dimqA\mu <\infty $.
Below we give an example where $\dimqL\mu <\inf_{x}\{\underline{\dim }_{\loc%
}\mu (x)\}$. Another example is Example~\ref{LpI}. In Section \ref{FT} we
prove that the equality does hold for a large class of self-similar
measures, which need not satisfy the open set condition.
\end{remark}

\begin{example}
\label{qLneLoc}A measure $\mu $ on $\mathbb{R}$ with $\dimqL\mu =0$ and $%
\inf_{x}\{\underline{\dim }_{\loc}\mu (x)\}=1$: We construct a probability
measure $\mu $ with support $[0,1]$ defined iteratively on the dyadic
intervals. Label the dyadic intervals of length $2^{-n}$ (step $n) $ from
left to right as $I_{n}^{(i)},i=1,\dots,2^{n}$, so $I_{n}^{(1)},$ $%
I_{n}^{(2)}$ are the two descendants of $I_{n-1}^{(1)}$, for example. Let $%
\{n_{j}\}$ be an integer sequence with $n_{j+1}\geq 3n_{j}$ . Choose a
sequence $1/2\leq q_{j}\uparrow 1$ and put $%
t_{j}=q_{j}^{-n_{j}}2^{-(1+n_{j})}$. Assuming $\mu $ has been defined on the
dyadic intervals of step $n-1$, we define $\mu $ on the dyadic intervals of
step $n$ in the following fashion:%
\begin{eqnarray*}
\mu (I_{n}^{(1)}) &=&t_{j}\mu (I_{n-1}^{(1)})\quad\text{ and }\quad\mu
(I_{n}^{(2)})=(1-t_{j})\mu (I_{n-1}^{(1)})\quad\text{if}\quad n=n_{j}, \\
\mu (I_{n}^{(1)}) &=&q_{j}\mu (I_{n-1}^{(1)})\quad\text{ and }\quad\mu
(I_{n}^{(2)})=(1-q_{j})\mu (I_{n-1}^{(1)})\quad\text{if}\quad
n=n_{j}+1,\dots,2n_{j}.
\end{eqnarray*}%
All other dyadic intervals of step $n$ will have measure $1/2$ that of their
parent interval.

We even have \underline{$h$}$(1/2)=0,$ and thus $\dimqL\mu =0$, because 
\begin{equation*}
\frac{\mu (B(0,2^{-n_{j}}))}{\mu (B(0,2^{-2n_{j}}))}=\frac{1}{q_{j}^{n_{j}}}%
=\left( \frac{2^{-n_{j}}}{2^{-2n_{j}}}\right) ^{t}
\end{equation*}%
for $t=-\log q_{j}/\log 2$ and $t\rightarrow 0$ as $q_{j}\rightarrow 1$.

To see that $\underline{\dim }_{\loc}\mu (x)\geq 1$ for all $x\in \supp\mu ,$
we first consider $x\neq 0$. Choose $N_{0}$, depending on $x$ such that $%
x>4\cdot 2^{-N_{0}}$. If $2^{-(n+1)}<r\leq 2^{-n}$ for $n\geq N_{0},$ then $%
B(x,r)$ is contained in the union of four consecutive dyadic intervals of
length $2^{-n}$, none of which intersect the two left-most intervals of step 
$N_{0}$. Thus 
\begin{equation*}
\mu (B(x,r))\leq 4\cdot 2^{N_{0}-n}\max \left( \mu (I_{N_{0}}^{(i)}):i\geq
3\right) =C2^{N_{0}-n},
\end{equation*}%
so 
\begin{equation*}
\frac{\log \mu (B(x,r))}{\log r}\geq \frac{\log C2^{N_{0}-n}}{\log 2^{-(n+1)}%
}\rightarrow 1\text{ as }n\rightarrow \infty .
\end{equation*}%
Finally, consider $x=0$. The choice of $t_{j}$ ensures that $\mu
(B(0,2^{-n}))\leq 2^{-n}$ for all $n$ and that certainly implies $\underline{%
\dim }_{\loc}\mu (0)\geq 1$. That completes the proof.
\end{example}

\subsection{Lower dimension and regularity}

A measure $\mu $ is called $s$-regular if there exists a uniform constant $%
c>0$ such that 
\begin{equation*}
c^{-1}r^{s}\leq \mu (B(x,r))\leq cr^{s}
\end{equation*}%
for all $x\in \supp\mu $ and $0<r<\diam\supp\mu $. It is easy to show from
the definitions that if $\mu $ is $s$-regular then $\dimL\mu =\dimA\mu =s$,
see e.g.\ \cite{KL} and \cite{KLV}. However, it is not true that coinciding
lower and upper Assouad dimension implies $s$-regularity, as the following
example illustrates.

\begin{example}
Let $M_{v}$ be the collection of triadic intervals labelled by finite words
on the letters $\{0,1,2\}$. We construct a finite measure $\mu $ on $[0,1]$
as follows: 
\begin{eqnarray}
\mu (M_{v}) &=&(k+1)3^{-(k+1)}\text{ if }v=1^{(k)}0\text{ or }v=1^{(k)}2, 
\notag \\
\mu (M_{1^{(k)}jv}) &=&\frac{k+1}{3^{k+1+l}}\text{ if }j\in \{0,2\}\text{
and }v\in \{0,1,2\}^{l},  \label{eqref} \\
\mu (M_{1^{(k)}}) &=&2\sum_{i=k+1}^{\infty }\frac{i}{3^{i}}=3^{-k}(3/2+k). 
\notag
\end{eqnarray}%
One can easily check that $\mu $ is well defined and upon normalizing by $%
\mu ([0,1])=2\sum_{i=0}^{\infty }\frac{i+1}{3^{i+1}}=\frac{3}{2},$ we obtain
a probability measure.

We now estimate the ratio between any triadic interval and its descendants.
Consider $M_{v}$ and $M_{vw}$ for $v\in \{0,1,2\}^{k}$ and $w\in
\{0,1,2\}^{l}$, where $l\geq 1$. If $v\neq 1^{(k)}$, then $\mu (M_{v})/\mu
(M_{vw})=3^{l}$, using \eqref{eqref}. If, however, $v=1^{(k)}$, then 
\begin{equation}
\frac{k+1}{3^{k+l}}=\mu (M_{v0^{(l)}})\leq \mu (M_{vw})\leq \mu
(M_{1^{(k+l)}})=3^{-(k+l)}(3/2+(k+l))  \label{ineq1}
\end{equation}%
Note also that for $j\in \{0,2\}$ and $k\geq 1$, 
\begin{equation}
\frac{\mu (M_{1^{(k)}})}{\mu (M_{1^{(k-1)}j})}=\frac{3^{-k}(3/2+k)}{k3^{-k}}=%
\frac{3/2+k}{k}\leq \frac{5}{2}.  \label{ineq2}
\end{equation}%
The inequalities \eqref{ineq1} and \eqref{ineq2} show that neighbouring
triadic intervals of the same length differ by at most a factor of $5/2$.

Now let $J\subseteq I\subseteq \lbrack 0,1]$ be intervals. Write $k$ and $l$
for the unique integers satisfying $3^{-(k-1)}\leq \diam I\leq 3^{-(k-2)}$
and $3^{-(k+l-1)}\leq \diam J\leq 3^{-(k+l-2)}$. Thus $I$ contains a triadic
interval of length $3^{-k}$ and is contained within $10$ intervals of length 
$3^{-k}$. Analogously, $J$ contains an interval of length $3^{-(k+l)}$ and
is contained in $10$ intervals of the same length. We can therefore find $%
M_{v}$ and $M_{vw}$, $v\in \{0,1,2\}^{k}$, $w\in \{0,1,2\}^{l}$ such that 
\begin{equation*}
\mu (M_{v})\leq \mu (I)\leq \left( \frac{5}{2}\right) ^{10}\mu (M_{v})\quad 
\text{and}\quad \mu (M_{vw})\leq \mu (J)\leq \left( \frac{5}{2}\right)
^{10}\mu (M_{vw}),
\end{equation*}%
and hence 
\begin{equation*}
\frac{\mu (I)}{\mu (J)}\sim \frac{\mu (M_{v})}{\mu (M_{vw})}
\end{equation*}%
where $\sim $ denotes uniform comparability. But 
\begin{equation*}
\frac{\mu (M_{v})}{\mu (M_{vw})}\leq \max \left\{ 3^{l},\,\frac{3^{-k}(3/2+k)%
}{(k+1)3^{-(k+l)}}\right\} =3^{l}\frac{3/2+k}{k+1}\leq \frac{3}{2}3^{l}
\end{equation*}%
and 
\begin{equation*}
\frac{\mu (M_{v})}{\mu (M_{vw})}\geq \min \left\{ 3^{l},\,\frac{3^{-k}(3/2+k)%
}{3^{-(k+l)}(3/2+(k+l))}\right\} =3^{l}\frac{3/2+k}{3/2+(k+l)}\geq 3^{l}%
\frac{5/2}{5/2+l}.
\end{equation*}%
So 
\begin{equation*}
\left( \frac{5}{2}\right) ^{11}3^{l}\geq \frac{\mu (I)}{\mu (J)}\geq \left( 
\frac{5}{2}\right) ^{-10}\frac{5/2}{5/2+l}3^{l}.
\end{equation*}%
Further, $(\diam I)/(\diam J)\sim 3^{l}$ and so for every $\delta >0$ there
exists $C>0$ such that 
\begin{equation*}
C\frac{\diam I}{\diam J}\geq \frac{\mu (I)}{\mu (J)}\geq C^{-1}\left( \frac{%
\diam I}{\diam J}\right) ^{1-\delta }.
\end{equation*}%
In particular this holds for $I=B(x,R)$ and $J=B(x,r)$ and so the upper and
lower Assouad dimension of $\mu $ is $1$. But $\mu (B(1/2,\,3^{-k}))=\mu
(M_{1^{(k)}})=3^{-k}(3/2+k)$ and there is no constant $K>0$ such that $\mu
(B(x,r))\leq Kr$, so $\mu $ is not $1$-regular. Since it cannot be $s$%
-regular for any $s\neq 1$, the measure $\mu $ is not $s$-regular for any $%
s\geq 0$.
\end{example}

\section{Uniformly perfect measures\label{up}}

Analogous to the metric space properties, it is known that a measure has
positive lower Assouad dimension if and only if it is uniformly perfect,
c.f.~\cite{KL}. We exhibit a general Moran type construction of a measure
that has positive lower Assouad dimension and give a lower bound on the
lower Assouad dimension in terms of the Moran construction data. We show
that many commonly considered fractal measures satisfy the construction
constraints. In particular, self-affine measures are seen to have positive
lower Assouad dimension, and hence are uniformly perfect, as long as they
are not a degenerate point mass.

\subsection{Characterizing positive lower Assouad dimension}

\begin{definition}
Let $\mu $ be a compactly supported Borel probability measure. If there
exist positive constants $c,\gamma $ such that 
\begin{equation}
\mu (B(x,R)\setminus B(x,cR))\geq \gamma \mu (B(x,R))  \label{unifperfect}
\end{equation}%
for all $x\in\supp\mu$ and $R\leq \diam (\supp\mu)$, we say that $\mu $ is 
\textbf{uniformly perfect}\footnote{%
This condition is also known as ``inverse doubling''.}.
\end{definition}

Of course \eqref{unifperfect} is equivalent to the statement that 
\begin{equation}  \label{eq:rewritten}
\frac{\mu (B(x,R))}{\mu (B(x,cR))}\geq (1-\gamma )^{-1}.
\end{equation}%
We have opted to state our definition to mirror the metric space definition
of uniformly perfect, which states that a metric space is uniformly perfect
if for every centred ball, the annulus must be non-empty. From the
definition of uniformly perfect for measures it is immediate that the
support of a uniformly perfect measure must also be uniformly perfect.
However, the converse may not be true; it is possible to construct a measure
which is not uniformly perfect, but supported on an uniformly perfect set.;
c.f., Example \ref{qLneLoc} where the measure $\mu $ has support equal to $%
[0,1]$.

\begin{theorem}
\label{thm:lowerlower} Let $\mu $ be a compactly supported Borel probability
measure. Then $\dimL\mu >0$ if and only if $\mu $ is uniformly perfect. More
precisely, if $\mu $ is uniformly compact with positive constants $c,\gamma $
as in \eqref{unifperfect}, then $\dimL\mu \geq \log (1-\gamma )/\log c.$
\end{theorem}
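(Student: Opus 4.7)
The plan is to prove both directions of the equivalence from the defining inequalities, with the quantitative bound $\dimL\mu \geq \log(1-\gamma)/\log c$ dropping out of the iteration used in the forward direction.

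For the $(\Leftarrow)$ direction, I would start by rewriting \eqref{unifperfect} in the multiplicative form \eqref{eq:rewritten} and iterate it: applying the inequality successively to the nested balls $B(x,R), B(x,cR), \dots, B(x, c^{n-1}R)$ (all centred at the same $x \in \supp\mu$ and of radius at most $\diam\supp\mu$) and telescoping gives
\[
\frac{\mu(B(x,R))}{\mu(B(x,c^n R))}\;\geq\;(1-\gamma)^{-n}.
\]
Given $0 < r \leq R \leq \diam\supp\mu$, I would then take $n$ to be the largest integer with $c^n R \geq r$, so $n \geq \log(R/r)/\log(1/c) - 1$ and $B(x,r) \subseteq B(x, c^n R)$. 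Substituting, and using that $\log(1-\gamma)$ and $\log c$ are both negative (so that $s := \log(1-\gamma)/\log c$ is positive), a short rearrangement yields
\[
\frac{\mu(B(x,R))}{\mu(B(x,r))}\;\geq\;(1-\gamma)\left(\frac{R}{r}\right)^{s},
\]
which is precisely the inequality witnessing $\dimL\mu \geq s$.

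For the $(\Rightarrow)$ direction, I would fix any $s$ with $0 < s < \dimL\mu$; by definition there exist $c_1,c_2 > 0$ with $\mu(B(x,R))/\mu(B(x,r)) \geq c_2 (R/r)^s$ for all $x \in \supp\mu$ and $0 < r \leq R \leq c_1$. I would then pick $c \in (0,1)$ small enough that simultaneously $c_2 c^{-s} \geq 2$ (handling $R \leq c_1$) and $c \leq c_1/\diam\supp\mu$ (ensuring $cR \leq c_1$ in the remaining range). The first condition gives $\mu(B(x,cR)) \leq \tfrac12 \mu(B(x,R))$ directly when $R \leq c_1$. For $c_1 < R \leq \diam\supp\mu$, applying the hypothesis to the pair $(c_1, cR)$ together with the trivial monotonicity $\mu(B(x,c_1)) \leq \mu(B(x,R))$ yields $\mu(B(x,cR)) \leq c_2^{-1}(cR/c_1)^s \mu(B(x,R))$, which is at most $\tfrac12 \mu(B(x,R))$ after one further shrinkage of $c$. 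Hence $\mu$ is uniformly perfect with $\gamma = \tfrac12$.

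The main obstacle, modest as it is, lies in the converse: the definition of $\dimL\mu$ only controls scales up to some $c_1$, whereas the uniformly perfect condition is asserted up to $R = \diam\supp\mu$. Closing this scale gap uses compactness of the support and a single additional tightening of $c$; once dealt with, the forward direction is a one-line iteration and the exponent $\log(1-\gamma)/\log c$ emerges automatically from the choice of $n$.
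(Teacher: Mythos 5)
Your proof is correct, and the quantitative direction (uniformly perfect implies $\dimL\mu\geq\log(1-\gamma)/\log c$) uses the same telescoping iteration of \eqref{eq:rewritten} along the chain $R,cR,\dots,c^{n-1}R$ that the paper uses. For the converse, which the paper dismisses as following directly from the definition, you correctly identify and close the only real subtlety, namely the scale gap between $R\leq c_{1}$ in the definition of $\dimL\mu$ and $R\leq\diam\supp\mu$ in the definition of uniformly perfect, by shrinking $c$ so that $cR\leq c_{1}$ and comparing $\mu(B(x,cR))$ against $\mu(B(x,c_{1}))\leq\mu(B(x,R))$.
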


\begin{proof}
First, assume $\mu $ is uniformly perfect. Let $c,\gamma $ be as (\ref%
{unifperfect}). For $r<R$, choose $n$ such that $c^{n-1}R>r\geq c^{n}R$.
Without loss of generality, $n\geq 2$ and repeatedly applying %
\eqref{eq:rewritten} gives 
\begin{eqnarray*}
\frac{\mu (B(x,R))}{\mu (B(x,r))} &\geq &\frac{\mu (B(x,R))}{\mu
(B(x,c^{n-1}R))}\geq \frac{\mu (B(x,R))}{\mu (B(x,cR))}\frac{\mu (B(x,cR))}{%
\mu (B(x,c^{2}R))}\cdot \cdot \cdot \frac{\mu (B(x,c^{n-2}R))}{\mu
(B(x,c^{n-1}R))} \\
&\geq &(1-\gamma )^{-(n-1)}\geq (1-\gamma )(1-\gamma )^{\log (R/r)/\log
c}=(1-\gamma )\left( \frac{R}{r}\right) ^{\frac{\log (1-\gamma )}{\log c}}.
\end{eqnarray*}%
Thus $\dimL\mu \geq \log (1-\gamma )/\log c>0$.

The other direction is straightforward and follows directly from the
definition.
\end{proof}

\subsection{Moran constructions}

Let $\Lambda =\{1,\dots ,N\}$ be a finite alphabet with $2\leq N<\infty $
letters and write $\Lambda ^{k}$ for all words of length $k$, $\Lambda
^{\ast }$ for the collection of all finite words including the empty word $%
\varepsilon _{0}$, and $\Lambda ^{\mathbb{N}}$ for all infinite words. A
countable subset $S\subseteq \Lambda ^{\ast }$ is called a \emph{section} if
for every long word $w\in \Lambda ^{\ast }$ there exists $u\in S$ and $v\in
\Lambda ^{\ast }$ such that $w=uv$, i.e.,\ every long enough word has an
ancestor in $S$. A section $S$ is \emph{minimal} if no proper subset of $S$
is a section.

For every word $v\in \Lambda ^{\ast }$, let $M_{v}\subset X$ be an arbitrary
set satisfying the following conditions:

\begin{enumerate}[(a)]

\item \label{cond:subset} $M_{vw}\subseteq M_v$ for all $v,w\in\Lambda^*$;

\item \label{cond:decr} $\max_{v\in\Lambda^k} \diam M_v\to0$ as $k\to\infty$;

\item \label{cond:notsmall} $\diam(M_{vj})\geq C_{1}\diam(M_{v})$ for all $%
v\in \Lambda ^{\ast }$, $j\in \Lambda $, and $C_{1}>0$ not depending on $v$
and $j$;

\item \label{cond:notbunching} for every $v\in \Lambda ^{\ast }$ there exist 
$i,j\in \Lambda $ such that $d(M_{vi},M_{vj})\geq C_{2}\diam(M_{v})$, where $%
C_{2}>0$ does not depend on $v,i,j$ and $d(A,B)$ denotes the distance of
sets $A$ and $B$.
\end{enumerate}

Finally, let $M$ be the $\limsup $ set of $\{M_{v}\}$: 
\begin{equation*}
M=\bigcap_{n=1}^{\infty }\bigcup_{k=n}^\infty\bigcup_{v\in \Lambda
^{k}}M_{v}.
\end{equation*}

Let $m$ be a weight function on the collection $\{M_v\,:\,v\in\Lambda^*\}$
satisfying the following conditions:

\begin{enumerate}[(A)]

\item $m(M_{\varepsilon_0})=1$;

\item $m(M_{v})=\sum_{i=1}^N m(M_{vi})$;

\item $m(M_{vi})\leq C_3 m(M_v)$ for some uniform $0<C_3<1$.
\end{enumerate}

For $E\subseteq X$, let $\Lambda ^{\ast }(E)$ be the collection of all words 
$v\in \Lambda ^{\ast }$ such that $M_{v}\cap E\neq \emptyset $. We let $\mu $
be the measure\footnote{%
Strictly speaking, $\mu $ is an outer measure, as proven in Lemma~\ref%
{thm:outermeasure}. We will consider $\mu $ as a set function, and when
using properties of measures we will assume, without further mention,
measurability of the sets being considered.} induced by the weight function.
In other words,\ writing $S$ for the collection of all minimal sections of $%
\Lambda ^{\ast }$, the measure $\mu $ is given by 
\begin{equation}
\mu (E)=\inf_{A\in S}\left\{ \sum_{v\in A^{\prime }}m(M_{v})\;:\;A^{\prime
}=A\cap \Lambda ^{\ast }(E)\right\} ,  \label{meas}
\end{equation}%
for all $E\subseteq X$. In particular, our conditions give $\supp\mu =M$.

\clearpage
\begin{lemma}
\label{thm:outermeasure} The set function $\mu$, as constructed above, is an
outer measure.
\end{lemma}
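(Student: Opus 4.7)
The plan is to verify the three defining properties of an outer measure: $\mu(\emptyset)=0$, monotonicity, and countable subadditivity. For $\mu(\emptyset)=0$, note that $\Lambda^{\ast}(\emptyset)=\emptyset$, so every minimal section $A$ satisfies $A\cap\Lambda^{\ast}(\emptyset)=\emptyset$ and the defining infimum collapses to $0$. For monotonicity, $E\subseteq F$ forces $\Lambda^{\ast}(E)\subseteq\Lambda^{\ast}(F)$, so $\sum_{v\in A\cap\Lambda^{\ast}(E)}m(M_v)\leq\sum_{v\in A\cap\Lambda^{\ast}(F)}m(M_v)$ for every minimal section $A$, and $\mu(E)\leq\mu(F)$ follows upon taking infima.

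The substantive step is countable subadditivity $\mu(\bigcup_n E_n)\leq\sum_n\mu(E_n)$. Given $\varepsilon>0$, I would assume the right-hand side is finite and choose minimal sections $A_n$ with $\sum_{v\in A_n\cap\Lambda^{\ast}(E_n)}m(M_v)\leq\mu(E_n)+\varepsilon 2^{-n}$. The idea is to build a single minimal section $A^{\ast}$ satisfying $\sum_{v\in A^{\ast}\cap\Lambda^{\ast}(E)}m(M_v)\leq\sum_n\mu(E_n)+\varepsilon$, and then let $\varepsilon\to 0$. Setting $C:=\bigcup_n\left(A_n\cap\Lambda^{\ast}(E_n)\right)\subseteq\Lambda^{\ast}(E)$, for each infinite word $w\in\Lambda^{\mathbb{N}}$ I would select the shortest prefix of $w$ lying in $C$ when one exists, otherwise a shortest prefix of $w$ lying outside $\Lambda^{\ast}(E)$ when this exists, and otherwise a sufficiently deep prefix of $w$ whose weight is negligible by condition~(C); $A^{\ast}$ is the collection of all these selected prefixes.

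The key technical obstacle is verifying that $A^{\ast}$ is a genuine minimal section and estimating its restricted sum. Because $\Lambda^{\ast}$ is a finitely branching tree and each $A_n$ is an antichain meeting every infinite word, K\"onig's lemma forces each $A_n$ to have bounded max-depth $K_n$, and a parallel K\"onig argument applied to $A^{\ast}$ shows it has bounded depth once it is shown to be an antichain that covers every infinite word. A short case analysis using the upward closure of $\Lambda^{\ast}(E)$ inside the word tree (descendants of a word outside $\Lambda^{\ast}(E)$ stay outside) shows that the chosen prefixes are pairwise incomparable, and each is the unique $A^{\ast}$-prefix of some path, yielding both the antichain property and minimality. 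By construction $A^{\ast}\cap\Lambda^{\ast}(E)$ is contained in $C$ together with a controlled collection of deep cutoffs whose contribution can be made arbitrarily small via $m(M_{w|_K})\leq C_3^K$, so
\[
\sum_{v\in A^{\ast}\cap\Lambda^{\ast}(E)}m(M_v)\leq\sum_{v\in C}m(M_v)+o(1)\leq\sum_n\mu(E_n)+\varepsilon,
\]
which completes the proof of subadditivity and hence of the lemma.
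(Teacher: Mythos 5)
Your overall plan — verify $\mu(\emptyset)=0$, monotonicity, and countable subadditivity — matches the paper, and your treatment of the first two is correct (your monotonicity argument, taking the infimum over sections directly, is even a touch cleaner than the paper's, which works with a near-optimal section for the larger set). The gap is in countable subadditivity, and it is a real one.

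The construction of $A^{\ast}$ by a three-way case split on infinite words does not close. First, the antichain claim fails: if $w$ falls in your second case with shortest prefix $u\notin\Lambda^{\ast}(E)$, and $w'$ is in the third case (all prefixes in $\Lambda^{\ast}(E)\setminus C$), and $w,w'$ agree up to depth $K$, then the cutoff $w'|_{K}$ is a strict prefix of $u$. No single depth $K$ avoids this, because the sections $A_n$ cannot have uniformly bounded depth (if $\mu(E_n)\to 0$, the $A_n\cap\Lambda^{\ast}(E_n)$ are forced ever deeper). One can repair this by passing to the minimal antichain of all your selections, but that does not help with the second and more serious problem: the bound $m(M_{w|_K})\leq C_3^{K}$ on individual cylinders does not bound the \emph{aggregate} weight of third-case cutoffs. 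Condition~(B) forces $\sum_{v\in\Lambda^K} m(M_v)=1$ for every $K$, so the sum over any subset of depth-$K$ words need not tend to $0$ as $K\to\infty$. You assert the third-case contribution is $o(1)$ but give no argument for it, and I do not see one that does not require additional structural input.

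The paper avoids the issue entirely by a cylinder-based rather than word-based argument. It sets $B''=\bigcup_i (A_i\cap\Lambda^{\ast}(E_i))$, takes $B'$ its minimal antichain, and chooses a minimal section $B\subseteq\bigcup_i A_i$ containing $B'$. It then proves that any $v\in B$ with $M_v\cap\bigcup E_i\neq\emptyset$ already lies in $B'$: picking $x\in M_v\cap E_j$, extending $v$ to a coding of $x$, and locating its $A_j$-ancestor inside $B''$, minimality of $B$ forces that ancestor to be $v$ itself. This makes your third case vanish — every contributing element of $B\cap\Lambda^{\ast}(\bigcup E_i)$ is already in $B'\subseteq B''$ — so the bound $\sum_i(\mu(E_i)+\varepsilon_i)$ is obtained without any residual term to estimate.
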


\begin{proof}
Clearly, $\Lambda ^{\ast }(\emptyset)=\emptyset$ and thus $\mu (\emptyset)=0$%
. For monotonicity, let $D\subseteq E\subseteq M$ and observe that for every 
$\epsilon >0$ there exists a section $A_{\varepsilon } $ such that 
\begin{equation*}
\mu (E)\leq \sum_{v\in A_{\varepsilon }^{\prime }}m(M_{v})\leq \mu
(E)+\varepsilon ,
\end{equation*}%
where $A_{\varepsilon }^{\prime }=A_{\varepsilon }\cap \Lambda ^{\ast }(E)$.
Now $D\subseteq E$ and so $\Lambda ^{\ast }(D)\subseteq \Lambda ^{\ast }(E)$%
. Therefore, 
\begin{equation*}
\mu (D)\leq \sum_{v\in A\cap \Lambda ^{\ast }(D)}m(M_{v})\leq \sum_{v\in
A_{\varepsilon }^{\prime }}m(M_{v})\leq \mu (E)+\varepsilon .
\end{equation*}%
Since $\varepsilon $ was arbitrary we obtain the required $\mu (D)\leq \mu
(E)$.

Finally, for countable subadditivity, let $E_{i}$, $i\in \mathbb{N}$, be a
sequence of subsets of $M$. Let $\varepsilon >0$ be arbitrary and define $%
\varepsilon _{i}=\varepsilon /2^{i}$. Let $A_{i}$ be a section such that 
\begin{equation*}
\mu (E_{i})\leq \sum_{v\in A_{i}^{\prime }}m(M_{v})\leq \mu
(E_{i})+\varepsilon _{i},
\end{equation*}%
where $A_{i}^{\prime }=A_{i}\cap \Lambda ^{\ast }(E_{i})$. Let $B^{\prime
\prime }=\bigcup A_{i}^{\prime }$ and let $B^{\prime }\subseteq B^{\prime
\prime }$ be a minimal subset, meaning that\ if $v\in B^{\prime },$ then
there does not exist non-empty $w\in \Lambda ^{\ast }$ such that $vw\in
B^{\prime }$. Note that $\bigcup A_{i}$ is a countable section, though not
necessarily minimal, and must contain a minimal section $B$ that contains $%
B^{\prime }$.

We now show that if $v\in B$ and $M_{v}\cap \bigcup E_{i}\neq \emptyset$,
then $v\in B^{\prime }$. So assume that for some $v\in B$ \ there exists $%
x\in M_{v}\cap \bigcup E_{i}$. Then there exists $j$ such that $x\in E_{j}$
and a coding $vw\in \Lambda ^{\mathbb{N}}$ such that $\bigcap_{i=1}^{\infty
}M_{(vw)\rvert _{i}}=x$. Since $A_{j}$ is a section there must exist $k$
such that $(vw)\rvert _{k}\in A_{j}$. Further, as $x\in M_{vw\rvert _{k}}$
we have $(vw)\rvert _{k}\in \Lambda ^{\ast }(\{x\})\subseteq \Lambda ^{\ast
}(E_{j})$ and so $(vw)\rvert _{k}\in A_{j}^{\prime }$ and $(vw)\rvert
_{k}\in B^{\prime \prime }$. Since $B^{\prime }$ is a minimal section it
must contain $(vw)\rvert _{l}$ for some $l\leq k$. We cannot have $l>|v|$ as
then $(vw)\lvert _{l}$ has the ancestor $v$ in $B$ and $B$ is not minimal.
Further, we cannot have $l<|v|$ for then $v\in B$ has an ancestor in $B$,
again breaking minimality. Hence $l=\lvert v\rvert $ and $v=(vw)\rvert
_{l}\in B^{\prime }$, as required.

We can now bound the measure of $\bigcup E_{i}$: 
\begin{eqnarray*}
\mu \left( \bigcup E_{i}\right) &\leq &\sum_{v\in B^{\prime }}m(M_{v})\leq
\sum_{v\in B^{\prime \prime }}m(M_{v})\leq \sum_{i\in \mathbb{N}}\sum_{v\in
A_{i}^{\prime }}m(M_{v}) \\
&\leq &\sum_{i\in \mathbb{N}}(\mu (E_{i})+\varepsilon _{i})=\sum_{i\in 
\mathbb{N}}\mu (E_{i})+\varepsilon .
\end{eqnarray*}%
Letting $\varepsilon \rightarrow 0$ gives the required subadditivity.
\end{proof}

Using this construction and Theorem \ref{thm:lowerlower} we can prove the
following theorem\footnote{%
Independently, Rossi and Shmerkin \cite[\S 4.2]{Rossi18} also proved that a
similar Moran construction is uniformly perfect.}.

\begin{theorem}
Let $\Lambda $ be a finite alphabet and $M_{v}$, $v\in \Lambda ^{\ast }$ and 
$m$ satisfy the conditions above. Then 
\begin{equation*}
\dimL\mu \geq \frac{\log (1-C_{3})}{\log C_{1}}>0
\end{equation*}%
and hence $\mu $ is uniformly perfect.
\end{theorem}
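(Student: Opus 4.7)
The plan is to verify the uniform perfectness condition \eqref{unifperfect} for $\mu$ and then invoke Theorem~\ref{thm:lowerlower}. The aim is to produce constants $c,\gamma\in(0,1)$ for which $\log(1-\gamma)/\log c$ realizes (or exceeds) the announced $\log(1-C_3)/\log C_1$; matching the formula suggests taking $c$ comparable to $C_1$.

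Given $x\in\supp\mu=M$ and $0<R\leq\diam\supp\mu$, the first step is to locate a Moran cell at scale $R$ containing $x$. Since $x\in M$, conditions (a)--(b) together with a standard compactness/diagonal argument yield a coding $\omega\in\Lambda^{\mathbb{N}}$ with $x\in\bigcap_k M_{\omega\rvert_k}$. By (b) the diameters $\diam M_{\omega\rvert_k}$ tend to zero monotonically, and by (c) they shrink by a factor of at least $C_1$ per level. Hence there is a unique $k$ with $\diam M_{\omega\rvert_k}\leq R<\diam M_{\omega\rvert_{k-1}}$, and setting $v=\omega\rvert_k$ gives $\diam M_v\in(C_1R,R]$. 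In particular $M_v\subseteq B(x,R)$, so $\mu(B(x,R))\geq m(M_v)$.

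Next, apply condition (d) at $v$ to produce separated siblings $M_{vi'},M_{vj'}$ with $d(M_{vi'},M_{vj'})\geq C_2\diam M_v\geq C_1C_2R$. In the favourable case where the coding continues into one of them, say $\omega_{k+1}=i'$ (so $x\in M_{vi'}$), the cell $M_{vj'}$ is automatically far from $x$; for sufficiently small $c>0$ one expects $B(x,cR)\cap M\subseteq M_{vi'}$, and then condition (C) gives
\[
\mu(B(x,cR))\;\leq\;m(M_{vi'})\;\leq\;C_3\,m(M_v)\;\leq\;C_3\,\mu(B(x,R)).
\]
Consequently $\mu(B(x,R)\setminus B(x,cR))\geq(1-C_3)\mu(B(x,R))$, and Theorem~\ref{thm:lowerlower} produces a positive lower bound for $\dimL\mu$ of the advertised shape.

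The main obstacle is twofold. First, condition (d) only guarantees \emph{some} separated pair of children of $v$, so the unfavourable case $\omega_{k+1}\notin\{i',j'\}$ must be handled, presumably by descending further along $\omega$ and re-applying (d) until the favourable case occurs, while checking that this iteration leaves $c$ uniformly positive and independent of $x,R$. Second, confining $B(x,cR)\cap M$ inside the single child $M_{vi'}$ of $v$ requires controlling the distance from $x$ to \emph{all} children of $v$, not merely the two provided by (d); engineering $c$ (comparable to $C_1$) so that this confinement holds uniformly, and so that the resulting exponent recovers exactly the tight value $\log(1-C_3)/\log C_1$, is the technical crux of the proof.
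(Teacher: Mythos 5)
The approach you outline diverges from the paper's in two essential respects, and you have correctly sensed one of the gaps but not the other.

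First, the confinement difficulty you flag at the end is fatal and is not a technicality that can be engineered away. Condition~(d) guarantees only that \emph{one} pair of children of $M_v$ is $C_2\diam(M_v)$-separated; it says nothing about the other siblings, which in this general Moran setting may overlap one another and $M_{vi'}$ arbitrarily. So even when $x\in M_{vi'}$ you cannot choose $c$ so that $B(x,cR)\cap M\subseteq M_{vi'}$: the ball can meet any number of other children $M_{vj''}$ with $j''\notin\{i',j'\}$, and you cannot even assume $x$ lies in a \emph{unique} cell at the scale of $R$. The paper sidesteps this entirely. Rather than try to pin the small ball inside a single child, it collects \emph{all} words $v$ at scale $C_1R$ whose cells lie inside $B(x,R)$ into a family $\mathcal{C}$, bounds $\mu(B(x,R))\geq\sum_{v\in\mathcal{C}}m(M_v)$ from below by monotonicity, and then bounds $\mu(B(x,e^{-n}R))$ from \emph{above} by showing that every cell meeting the small ball has an ancestor in a pruned product set $\mathcal{C}\times W$. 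The pruning at each level is exactly the content of condition~(d): the two separated children are more than $3e^{-n}R$ apart, so the small ball (of diameter $2e^{-n}R$) can meet at most one of them, and the other is discarded. No confinement claim is ever needed.

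Second — and this you appear to have missed — your one-level descent cannot produce the advertised exponent. From $\mu(B(x,cR))\leq C_3\,\mu(B(x,R))$ with $c\approx C_1$, Theorem~\ref{thm:lowerlower} yields $\dimL\mu\geq\log(1-\gamma)/\log c$ with $1-\gamma=C_3$, i.e.\ $\dimL\mu\gtrsim\log C_3/\log C_1$, whereas the theorem states $\log(1-C_3)/\log C_1$. These are different functions of $C_3$ (equal only when $C_3=1/2$) and cannot both come from the same single-step estimate. The factor $1-C_3$ in the stated bound enters only because the paper's argument iterates the exclusion over $k\sim n/\log(1/C_1)$ levels inside each $v\in\mathcal{C}$: the excluded child at each level removes a fixed fraction of the available mass, accumulating a factor of the form $(1-C_3)^k$ for the small ball, which feeds into Theorem~\ref{thm:lowerlower} with $c=e^{-n}$ and $1-\gamma=(1-C_3)^k$. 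Replacing your single application of~(d) by this multi-level exclusion, and replacing the single distinguished cell containing $x$ by the family $\mathcal{C}$, is precisely what makes the paper's proof go through and produce the constant it claims.
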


\begin{proof}
Let $x\in M$ and $R>0$ be arbitrary. We define 
\begin{equation*}
\mathcal{C}=\{v\in \Lambda ^{\ast }\,:\,\diam(M_{v})\leq C_{1}R\text{, }\diam%
(M_{v^{-}})>C_{1}R\text{, }M_{v}\subseteq B(x,R)\}.
\end{equation*}%
Note that, by definition, $\bigcup_{v\in \mathcal{C}}M_{v}\subseteq B(x,R)$.
Let $m$ be large enough that $C_{1}^{m}+C_{1}<1$. Choose $n$ so large such
that $e^{-n}\leq C_{1}^{m}$, so any $w\in \Lambda ^{\ast }$ for which $%
M_{w}\cap B(x,e^{-n}R)\neq \emptyset $ and $\diam M_{w}\leq e^{-n}R$ must
have an ancestor $w^{\prime }$ such that $\diam M_{w^{\prime }}\leq C_{1}R$
but $\diam M_{w^{\prime -}}>C_{1}R$. Therefore 
\begin{equation*}
d(x,y)<e^{-n}R+C_{1}R\leq (C_{1}^{m}+C_{1})R<R
\end{equation*}%
for all $y\in M_{w^{\prime }}$ and $M_{w^{\prime }}\subseteq B(x,R)$. Hence, 
$w^{\prime }\in \mathcal{C}$ and, in particular, every word in 
\begin{equation*}
\mathcal{B}=\{v\in \Lambda ^{\ast }\,:\,\diam(M_{v})\leq e^{-n}R\text{, }%
\diam(M_{v^{-}})>e^{-n}R\text{, }M_{v}\cap B(x,e^{-n}R)\neq \emptyset \}
\end{equation*}%
must have an ancestor in $\mathcal{C}$. Let $k$ be the maximal integer such
that $C_{2}C_{1}^{k+2}R>3e^{-n}R$ and temporarily fix $v\in \mathcal{C}$.
Note that for all $1\leq j\leq k$, there exist two words $\alpha ,\beta \in
\Lambda ^{j}$ such that $\diam M_{v\alpha }$, $\diam M_{v\beta }>e^{-n}R$
and further that $d(M_{v\alpha },M_{v\beta })>3e^{-n}R$. Hence, at most one
of $M_{v\alpha },M_{v\beta }$ can intersect $B(x,e^{-n}R)$ and for every $j$
there exists at least one $w_{j}\in \Lambda $ such that $M_{vz_{1}z_{2}\dots
z_{j-1}w_{j}}\cap B(x,e^{-n}R)=\emptyset $ where $z_{1}\neq w_{1}$, $%
z_{2}\neq w_{2}$, etc. Since $m(M_{w})<C_{3}m(M_{w^{-}})$, we further get 
\begin{equation*}
m\left( \bigcup_{i\in \Lambda \setminus w_{1}}M_{vi}\right) \leq
(1-C_{3})m(M_{v})
\end{equation*}%
and, inductively, for $W=(\Lambda \setminus w_{1})\times (\Lambda \setminus
w_{2})\times \dots \times (\Lambda \setminus w_{k_{i}})$, 
\begin{equation*}
m\left( \bigcup_{u\in W}M_{vu}\right) \leq (1-C_{3})^{k}m(M_{v}).
\end{equation*}%
Observe that by construction $M_{vw}\cap B(x,e^{-n}R)=\emptyset $ for all $%
v\in \mathcal{C}$ and $w\not\in W$. Further, every word in $\mathcal{B}$
must have an ancestor in $\mathcal{C}\times W$ and so $\mu
(B(x,e^{-n}R))\leq \sum_{vw\in \mathcal{C}\times W}m(M_{vw})$. Also, note
that $\mu (B(x,R))\geq \sum_{v\in \mathcal{C}}m(M_{v})$ since $%
M_{vu}\subseteq B(x,R)$ for all $v\in \mathcal{C}$ and $u\in \Lambda ^{\ast
} $. Hence, 
\begin{equation*}
\frac{\mu (B(x,R))}{\mu ((B(x,e^{-n}R))}\geq \frac{\sum_{v\in \mathcal{C}%
}m(M_{v})}{\sum_{vw\in \mathcal{C}\times W}m(M_{vw})}\geq \frac{\sum_{v\in 
\mathcal{C}}m(M_{v})}{(1-C_{3})^{k}\sum_{v\in \mathcal{C}}m(M_{v})}\geq
(1-C_{3})^{-k}
\end{equation*}%
and we can take $\gamma $ to be $1-(1-C_{3})^{k}$. Now $k$ is maximal and 
\begin{equation*}
C_{1}^{-k}<\frac{C_{1}^{2}C_{2}}{3}\frac{R}{e^{-n}R}=\frac{C_{1}^{2}C_{2}}{%
3e^{-n}}.
\end{equation*}%
So, 
\begin{equation*}
k\geq \frac{\log \left( C_{1}^{2}C_{2}/3\right) -\log e^{-n}}{\log (1/C_{1})}%
-1=\frac{n}{\log (1/C_{1})}+\frac{\log \left( -C_{1}^{2}C_{2}/3\right) }{%
\log (1/C_{1})}-1.
\end{equation*}%
We now apply Theorem \ref{thm:lowerlower} to get 
\begin{equation*}
\dimL\mu \geq \frac{\log (1-C_{3})^{-k}}{\log e^{n}}\geq \frac{-\log
(1-C_{3})}{n}\left( \frac{n}{\log (1/C_{1})}+\frac{\log \left(
C_{1}^{2}C_{2}/3\right) }{\log (1/C_{1})}-1\right) .
\end{equation*}%
Since $n$ was arbitrary, taking $n$ large gets the required bound on the
lower Assouad dimension and thus the measure is uniformly perfect.
\end{proof}

This result can be applied to a variety of measures. For instance, suppose
we are given an iterated function system (IFS) of similarities $%
\{S_{j}\}_{j=1}^{N}$ on $\mathbb{R}^{d}$ and probabilities $%
\{p_{j}\}_{j=1}^{N},$ with $p_{j}>0$ and $\sum_{i=1}^{N}p_{j}=1$. The
self-similar set associated with the IFS is the unique non-empty compact set 
$K$ such that $K=\bigcup_{j=1}^{N}S_{j}(K)$ which, without loss of
generality, can be assumed to be contained in $[0,1]^{d}$. We will further
assume that $K$ is not a singleton and thus perfect. The self-similar
measure $\mu $ is the unique probability measure satisfying 
\begin{equation*}
\mu =\sum_{j=1}^{N}p_{j}(\mu \circ S_{j}^{-1}).
\end{equation*}%
Given $v=(v_{j})_{j=1}^{n}\in \{1,\dots,N\}^{n},$ we let $%
S_{v}=S_{v_{1}}\circ S_{v_{2}}\circ \cdot \cdot \cdot \circ S_{v_{n}}$. If
we put $M_{v}=S_{v}([0,1]^{d})$, then the collection of sets $\{M_{v}\}$
satisfies the first two requirements of the Moran set construction above.
Condition \eqref{cond:notbunching} may not be satisfied, but by taking
iterates of the IFS it is eventually satisfied. If we also define the weight
function $m$ by $m(M_{vj})=p_{j}m(M_{v}),$ then the three conditions on the
weight function are also fulfilled. The self-similar measure is the measure $%
\mu $ arising from the weight function $m$ as in \eqref{meas}. Consequently,
applying Theorem \ref{thm:lowerlower} we obtain

\begin{corollary}
\label{thm:selfsimilarpos} The lower Assouad dimension of any non-degenerate
self-similar measure is positive.
\end{corollary}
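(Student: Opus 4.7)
The plan is to verify that every non-degenerate self-similar measure fits into the Moran construction of the preceding theorem and then apply that theorem directly. Most of this verification is sketched in the paragraph above the statement; two points merit care, namely the passage to iterates required for condition (d), and the role of non-degeneracy in the weight condition (C).

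First, I would set $M_v = S_v([0,1]^d)$ and $m(M_v) = p_{v_1}\cdots p_{v_n}$ for $v = (v_1,\ldots,v_n)$. Conditions (a), (b), and the weight conditions (A), (B) are immediate from the contractive structure of the IFS and from $\sum_j p_j = 1$. Condition (c) holds with $C_1 = \min_j r_j > 0$, where $r_j < 1$ is the contraction ratio of $S_j$. For (C) I would take $C_3 = \max_j p_j$; this is strictly less than one exactly when $\mu$ is non-degenerate, since $p_i = 1$ for some $i$ forces every other $p_j = 0$ and hence $\mu$ is the point mass at the fixed point of $S_i$.

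The only delicate step is condition (d). By self-similarity, the requirement that $d(M_{vi}, M_{vj}) \geq C_2 \diam(M_v)$ for every $v$ (with $C_2$ uniform) reduces to the existence of two letters $i, j \in \Lambda$ with $d(S_i([0,1]^d), S_j([0,1]^d)) > 0$. This need not hold for the original IFS, so I would replace the IFS by its $n$-fold iterate $\{S_w : w \in \Lambda^n\}$ for suitably large $n$. Because $K$ is not a singleton, I can choose distinct $x, y \in K$ with codings $\alpha, \beta \in \Lambda^{\mathbb N}$ satisfying $\bigcap_k S_{\alpha|_k}([0,1]^d) = \{x\}$ and $\bigcap_k S_{\beta|_k}([0,1]^d) = \{y\}$; for all sufficiently large $n$ the two cylinders $S_{\alpha|_n}([0,1]^d)$ and $S_{\beta|_n}([0,1]^d)$ are disjoint compact sets, hence separated by a positive distance. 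Since the self-similar measure and its support are unchanged under passing to iterates of the IFS, applying the preceding Moran construction theorem to the iterated system yields $\dimL \mu > 0$.

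The main obstacle, modest as it is, is the justification of (d) through iteration, together with noting that non-degeneracy is exactly what is needed to force $\max_j p_j < 1$ in (C); the remaining verifications are routine bookkeeping.
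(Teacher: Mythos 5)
Your proposal is correct and follows the same route as the paper: verify that $M_v = S_v([0,1]^d)$ with weights $m(M_v)=p_{v_1}\cdots p_{v_n}$ satisfies the Moran-construction hypotheses, passing to a sufficiently high iterate of the IFS to secure condition (d), and then invoke the preceding theorem. The paper merely asserts that iterating the IFS eventually yields (d); your argument via distinct coded points of $K$ and the shrinkage of cylinders is exactly the routine verification the paper leaves implicit (note only that in the paper's setup all $p_j>0$, so $\max_j p_j<1$ is automatic once $N\geq 2$, and non-degeneracy is really needed only to make $K$ non-trivial for condition (d)).
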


\begin{remark}
This formula for the lower bound on the dimension is by no means sharp. For
an IFS that satisfies the strong separation condition and contraction
factors $r_{j},$ the approach gives $\log (1-\max p_{i})/\log (\min r_{i})$.
The same methods as used in \cite[Theorem 2.4]{FH} for the upper Assouad
dimension show that the actual value of the lower Assouad dimension is $\min
(\log p_{i}/\log r_{i}),$ the same as the minimal lower local dimension (see 
\cite{CM}).
\end{remark}

One can further extend Corollary~\ref{thm:selfsimilarpos} to equilibrium
Gibbs measures and quasi-Bernoulli measures on self-conformal sets in
general. A quasi-Bernoulli measure $\mu $ on the symbolic space $\{1,\dots
,N\}^{\N}$ is any probability measure that satisfies 
\begin{equation*}
c^{-1}\leq \frac{\mu ([v_{1},\dots ,v_{k}])}{\mu ([v_{1},\dots ,v_{l}])\mu
([v_{l+1},\dots ,v_{k}])}\leq c
\end{equation*}%
for all $v_{i}\in \{1,\dots ,N\}$ and $1\leq l\leq k$, where 
\begin{equation*}
\lbrack v_{1},\dots ,v_{k}]=\{w\in \{1,\dots ,N\}^{\N}\,:\,w_{i}=v_{i}\text{
for all }1\leq i\leq k\}
\end{equation*}%
and $c>0$ is a uniform constant.

Self-conformal sets satisfy the bounded distortion condition and expressing
them as such a Moran construction is straightforward, see e.g.~\cite%
{KaenmakiRossi15}. Similarly, the conditions on the mass functions are
easily seen to be satisfied.

\begin{corollary}
The lower Assouad dimension of the push-forward of a quasi-Bernoulli measure
onto non-degenerate self-conformal sets is positive.
\end{corollary}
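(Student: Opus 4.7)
The plan is to cast the setup into the Moran construction framework of the preceding theorem and then just invoke it. Let $\{\phi_1,\dots,\phi_N\}$ be the conformal IFS with attractor $K$, let $\pi:\{1,\dots,N\}^{\N}\to K$ be the coding map, and let $\nu$ be the push-forward of the quasi-Bernoulli measure $\mu$ on symbolic space. Fix a compact set $X\supseteq K$ left invariant (up to containment) by every $\phi_j$, and put $M_v=\phi_v(X)$ for $v\in\Lambda^{*}$, with weight $m(M_v)=\mu([v])$.

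First I would check the geometric conditions (a)--(d). Conditions (a) and (b) are immediate from the composition rule and from uniform contraction of a conformal IFS. Condition (c) -- that $\diam M_{vj}\geq C_1\diam M_v$ -- follows from bounded distortion: if $\|\phi_v'\|_\infty\asymp \|\phi_v'\|_{\inf}$ uniformly in $v$, then $\diam M_{vj}\asymp \|\phi_v'\|\cdot \diam M_j$, and each individual $\diam M_j$ is bounded below. Condition (d), the separation of two descendants by a definite fraction of $\diam M_v$, holds after passing to a sufficiently high iterate of the IFS, exactly as in the remark preceding Corollary \ref{thm:selfsimilarpos}: since the attractor is non-degenerate (contains at least two points), one can find two cylinders at some depth $k$ whose images are separated, and then bounded distortion rescales this separation uniformly along every branch.

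Next I would verify the weight conditions (A)--(C). Conditions (A) and (B) are automatic from $m(M_v)=\mu([v])$ since $\mu$ is a probability measure additive over cylinders. For (C) I would use the quasi-Bernoulli inequality
\begin{equation*}
m(M_{vi})=\mu([v_1,\dots,v_k,i])\leq c\,\mu([v_1,\dots,v_k])\,\mu([i])=c\,\mu([i])\,m(M_v).
\end{equation*}
If $\max_j\mu([j])<c^{-1}$, we are done immediately with $C_3=c\max_j\mu([j])$. Otherwise, pass to the $K$-th iterated IFS (with alphabet $\Lambda^K$); the quasi-Bernoulli inequality then gives, for any $u\in\Lambda^K$, $\mu([u])\leq c^{K-1}\prod_{i=1}^K\mu([u_i])\leq c^{K-1}(\max_j\mu([j]))^K$, and the right-hand side tends to $0$ as $K\to\infty$, since non-degeneracy of the attractor forces $\max_j\mu([j])<1$ (otherwise $\mu$ would be a Dirac mass on a fixed point of $\phi_j$, contradicting either non-degeneracy of the attractor or positivity of the quasi-Bernoulli constant on every cylinder). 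Choosing $K$ large enough makes $C_3:=c\max_{u\in\Lambda^K}\mu([u])<1$, and the $K$-th iterate simultaneously secures condition (d).

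With the Moran hypotheses satisfied, the previous theorem applies and yields $\dimL\nu\geq\log(1-C_3)/\log C_1>0$. I expect the only genuine work to be packaging the bounded distortion property uniformly enough to guarantee (c) and (d) along every branch; by contrast, the mass-function side is purely combinatorial once one observes that taking a high enough iterate drives the largest single-step weight below~$1$.
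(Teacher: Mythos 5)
Your approach is the same as the paper's (which simply asserts that expressing the self-conformal system as a Moran construction is straightforward and that the mass conditions are easy to check), and most of your verifications are fine: (a), (b), (c) from bounded distortion, (d) via passing to a high iterate, (A) and (B) trivially.

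However, there is a genuine gap in your verification of condition~(C). You claim that $c^{K-1}\bigl(\max_j\mu([j])\bigr)^K$ tends to $0$ as $K\to\infty$ because $p:=\max_j\mu([j])<1$. But $c^{K-1}p^K = c^{-1}(cp)^K$, and this tends to $0$ only when $cp<1$; since the quasi-Bernoulli constant satisfies $c\geq 1$ and can be arbitrarily large, $cp\geq 1$ is entirely possible, in which case your bound does not go to zero and the argument fails. (You yourself point out, in the paragraph just before, that the ``immediate'' case is precisely $cp<1$, so the iteration step is supposed to handle $cp\geq 1$ --- but your bound only works in the case you already dispatched.) The correct fix does not require iteration at all: for any $v\in\Lambda^{*}$ and $i\in\Lambda$, pick a letter $j\neq i$ with $\mu([j])>0$ (such a $j$ exists because, for the quasi-Bernoulli ratio to be defined, every cylinder must carry positive mass and $N\geq 2$); then
\begin{equation*}
m(M_{vi})=\mu([vi])=\mu([v])-\sum_{l\neq i}\mu([vl])\leq \mu([v])-\mu([vj])\leq \bigl(1-c^{-1}\mu([j])\bigr)\mu([v]),
\end{equation*}
so $C_3=1-c^{-1}q<1$ with $q$ the second-smallest single-letter mass. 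This verifies~(C) in the original alphabet, and since passing to the iterated alphabet $\Lambda^K$ (needed for~(d)) only decreases the single-step ratios, (C) continues to hold there. A related minor slip: a Dirac-mass push-forward is ruled out not by non-degeneracy of the \emph{attractor} (a non-degenerate attractor can carry a degenerate measure) but by the requirement that every cylinder have positive $\mu$-measure, which is implicit in the quasi-Bernoulli definition; you offer both reasons, only the second is the right one.
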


\subsection{Self-affine measures}

The Moran construction detailed above is very flexible and also encompasses
self-affine measures. Showing this needs some extra work and our approach
here is similar to that of Xie, Jin, and Sun~\cite{Xie03} who proved that
self-affine sets are uniformly perfect. The approach relies chiefly on the
following easy lemma that only uses basic linear algebra. This lemma appears
in a slightly different form as Lemma 2.1 in \cite{Xie03}, but for
self-containment we have chosen to include its proof.

\begin{lemma}
\label{thm:matrixBound} Let $E=\{e_{1},\dots ,e_{d}\}$ be an orthonormal
basis of $\R^{d}$, let ${A},{B}$ be $d\times d$ matrices of which ${A}$ is
invertible. Then there exists a constant $\alpha _{{A}}>0$ depending only on 
${A}$ and $d$ such that 
\begin{equation*}
\max_{e\in E}\{\lvert {B}{A}\,e\rvert \}\geq \alpha _{{A}}\lVert {B}\rVert ,
\end{equation*}
where $\lVert B\rVert$ denotes the operator norm of $B$ acting as a linear
transformation on $\R^d$.
\end{lemma}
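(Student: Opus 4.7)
The plan is to prove this by reducing everything to the standard fact that the operator norm of a matrix is comparable to the maximum length of its images on an orthonormal basis, and then using invertibility of $A$ to pass between $B$ and $BA$.

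First, I would establish the preliminary observation: for any $d \times d$ matrix $C$,
\begin{equation*}
\|C\| \leq \sqrt{d}\,\max_{e\in E} |Ce|.
\end{equation*}
This follows because every unit vector $u \in \mathbb{R}^d$ can be written as $u = \sum_{i=1}^d \langle u, e_i\rangle e_i$ with $\sum_i \langle u, e_i\rangle^2 = 1$; applying the triangle inequality and Cauchy--Schwarz yields $|Cu| \leq \sum_i |\langle u, e_i\rangle| \cdot |Ce_i| \leq \sqrt{d} \max_i |Ce_i|$.

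Next, I would apply this observation with $C = BA$ to obtain
\begin{equation*}
\|BA\| \leq \sqrt{d}\,\max_{e\in E} |BAe|.
\end{equation*}
Since $A$ is invertible, I can recover $B$ from $BA$ via $B = (BA)A^{-1}$, so that $\|B\| \leq \|BA\|\,\|A^{-1}\|$. Combining the two inequalities gives
\begin{equation*}
\|B\| \leq \sqrt{d}\,\|A^{-1}\|\,\max_{e\in E}|BAe|,
\end{equation*}
and therefore the constant $\alpha_A = (\sqrt{d}\,\|A^{-1}\|)^{-1}$ does the job; it depends only on $A$ and $d$ as required.

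There is essentially no obstacle: the argument is a two-line manipulation once the norm-vs-basis comparison is in place. The only subtle point to state clearly is that invertibility of $A$ enters in exactly one place, namely bounding $\|B\| = \|(BA)A^{-1}\| \leq \|BA\|\,\|A^{-1}\|$, which is where the dependence of $\alpha_A$ on $A$ comes from. I would simply write the three-line computation above, noting the hypothesis that $\{e_1,\dots,e_d\}$ is orthonormal is used to justify the coordinate expansion of unit vectors.
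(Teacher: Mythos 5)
Your proof is correct and takes essentially the same route as the paper: bound $\lVert BA\rVert$ by the maximum of $\lvert BAe_i\rvert$ over the basis, then use submultiplicativity of the operator norm via $\lVert B\rVert \leq \lVert BA\rVert\,\lVert A^{-1}\rVert$. Your use of Cauchy--Schwarz is in fact a slight improvement, yielding the sharper constant $\alpha_A = (\sqrt{d}\,\lVert A^{-1}\rVert)^{-1}$ and sidestepping a small normalization slip in the paper (which takes $\sum_i\lvert c_i\rvert = 1$ for the norm-attaining vector rather than $\sum_i c_i^2 = 1$), though the paper's weaker constant $(d\,\lVert A^{-1}\rVert)^{-1}$ suffices for its purposes.
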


\begin{proof}
First note that there exists $x_{0}=\sum_{i=1}^{d}c_{i}e_{i}$ for some
scalars $c_{i}$ with $\sum_{i}\left\vert c_{i}\right\vert =1,$ such that $%
\lVert {B}{A}\rVert =\lvert {B}{A}x_{0}|$. By linearity, 
\begin{equation}
\lVert {B}{A}\rVert =\lvert {B}{A}x_{0}\rvert =\lvert c_{1}{B}{A}e_{1}+\dots
+c_{d}{B}{A}e_{d}\rvert \leq d\max_{1\leq i\leq d}\lvert {B}{A}e_{i}\rvert .
\end{equation}%
Thus the submuliplicativity of the matrix norm $\lVert .\rVert $ gives%
\begin{equation*}
\max_{e\in E}\{\lvert {B}{A}\,e\rvert \}\geq d^{-1}\lVert {B}{A}\rVert =%
\frac{\lVert {B}{A}\rVert \lVert {A}^{-1}\rVert }{d\lVert {A}^{-1}\rVert }%
\geq \frac{\lVert {B}\rVert }{d\lVert {A}^{-1}\rVert }.
\end{equation*}%
Letting $\alpha _{{A}}=(d\lVert {A}^{-1}\rVert )^{-1}$ finishes the proof.
\end{proof}

Let $f_{i}(x)={A}_{i}x+t_{i}$, $i=1,\dots,N,$ be affine maps such that ${A}%
_{i} $ is non-singular and $\lVert {A}_{i}\rVert <1$ for all $i$. The
self-affine set associated with $\{f_{i}\}$ is the unique compact set $K$
that satisfies 
\begin{equation*}
K=\bigcup_{i=1}^{N}f_{i}(K).
\end{equation*}%
We will assume that the attractor is not a singleton, which amounts to at
least two $f_{i},f_{j}$ having distinct fixed points, and prove

\begin{theorem}
Let $\mu $ be the push-forward of a quasi-Bernoulli measure on the
self-affine set $F$. If $F$ is not a singleton, the measure $\mu $ has
positive lower dimension and thus is uniformly perfect.
\end{theorem}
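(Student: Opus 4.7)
The plan is to encode the self-affine construction as a Moran construction satisfying conditions (a)--(d) and (A)--(C), so that the preceding theorem yields $\dimL\mu > 0$ and Theorem \ref{thm:lowerlower} then delivers uniform perfection. I would fix a closed ball $U\supseteq K$ and take $M_v = f_v(U)$, with weights $m(M_v) = \mu_0([v])$, where $\mu_0$ is the quasi-Bernoulli measure on $\Lambda^{\mathbb{N}}$ whose push-forward is $\mu$.

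Conditions (a), (b), (A) and (B) are then immediate. For (C), the quasi-Bernoulli inequality supplies $m(M_{vi})/m(M_v) \leq c\,\mu_0([i])$, and $c\cdot\max_i \mu_0([i]) < 1$ after passing to a sufficiently deep iterate of the IFS --- here the non-singleton hypothesis on $F$ is used to rule out $\mu_0$ being concentrated on a single sequence. For (c), since $U$ is a ball, $\diam M_v$ is uniformly comparable to $\lVert A_v\rVert$, and the submultiplicative inequality $\lVert A_v\rVert \leq \lVert A_v A_j\rVert\cdot\lVert A_j^{-1}\rVert$ yields $\diam M_{vj}/\diam M_v \geq 1/\max_j \lVert A_j^{-1}\rVert =: C_1 > 0$.

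The main obstacle is condition (d): producing, for every $v\in\Lambda^*$, letters $i,j\in\Lambda$ with $d(M_{vi}, M_{vj}) \geq C_2 \diam M_v$. This is exactly where Lemma \ref{thm:matrixBound} is decisive and where the argument adapts Xie--Jin--Sun to the measure-theoretic setting. I would fix an orthonormal basis $E = \{e_1,\ldots,e_d\}$ of $\mathbb{R}^d$ and, after iterating the IFS if necessary, use the non-singleton hypothesis on $K$ to select, for each $e_k\in E$, a pair of cylinders $f_{i_k}(U), f_{j_k}(U)$ that are disjoint and whose separation vector makes a bounded angle with $e_k$ and has length bounded below by a uniform constant. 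Given any $v$, Lemma \ref{thm:matrixBound} applied with $B = A_v$ and $A = I$ furnishes some $e_k$ with $|A_v e_k| \geq d^{-1}\lVert A_v\rVert$; applying $A_v$ to the separation vector of the corresponding pair then yields $d(M_{vi_k}, M_{vj_k}) \geq \alpha \lVert A_v\rVert$ for a constant $\alpha > 0$ independent of $v$, which is the desired (d).

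The delicate step is the construction of the pairs $(i_k, j_k)$ with separation genuinely controlled in every basis direction; this is where one exploits the fact that $K$, being non-trivial and invariant under the IFS, spreads in all directions after sufficiently many iterations, together with a compactness argument on the sphere to reduce from arbitrary directions to the finite basis $E$. Once (d) is established, all hypotheses of the preceding theorem hold and it supplies $\dimL\mu \geq \log(1-C_3)/\log C_1 > 0$, which via Theorem \ref{thm:lowerlower} completes the proof that $\mu$ is uniformly perfect.
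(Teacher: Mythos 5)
Your plan matches the paper's architecture (Moran encoding, then the Moran theorem plus Theorem \ref{thm:lowerlower}), and your treatments of conditions (a)--(c) and (A)--(C) are essentially what the paper does. The soft spot is your handling of condition (d). You apply Lemma \ref{thm:matrixBound} with $A = I$, obtaining for each $v$ some basis vector $e_k$ with $\lvert A_v e_k\rvert \geq d^{-1}\lVert A_v\rVert$ (where $A_v$ denotes the linear part of $f_v$), and you then need a pair of child cylinders whose separation vector lies within a controlled angle of this $e_k$. But producing such pairs --- disjoint, uniformly separated, and with separation directions available near \emph{every} basis vector --- is precisely what you defer to a vague ``compactness argument on the sphere.'' This is a genuine gap: without the open set condition there is no a priori reason that disjoint cylinders exist whose separation directions hit a neighbourhood of each $e_k$, and the angle must in fact be smaller than an explicit threshold (on the order of $1/(2d)$) to survive the unbounded anisotropy of the matrices $A_v$.

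The paper avoids this difficulty entirely by making a sharper choice of $A$ in Lemma \ref{thm:matrixBound}. After reducing to the case where $K$ is not contained in a proper affine subspace, one picks $d$ linearly independent points $y_1,\dots,y_d \in K$ and lets $Y$ be the invertible matrix with $Ye_i = y_i$. Applying the lemma with $B = A_v$ and $A = Y$ gives $\max_i \lvert A_v y_i\rvert \geq \alpha_Y\lVert A_v\rVert$ directly: a lower bound on the displacement of \emph{actual points of $K$} under $A_v$, with no need to approximate directions or to locate cylinder pairs aligned with basis vectors. Then $f_v(\{y_1,\dots,y_d\}) \subset K \cap M_v$ contains two points at distance comparable to $\lVert A_v\rVert$, and since the alphabet was replaced by a deep iterate $\{1,\dots,N\}^K$ so that each child $M_{vj}$ has diameter a small fraction of $\lVert A_v\rVert$, these two points must lie in distinct, well-separated children. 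The fix to your proof is exactly this: in Lemma \ref{thm:matrixBound}, take $A$ to be the change of basis $Y$ onto points of $K$ rather than the identity; the lemma's $\alpha_A$ absorbs the geometry of $K$, and the delicate ``bounded angle'' construction disappears.
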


\begin{proof}
Without loss of generality we can assume that $K$ is not contained in any
proper subspace of $\mathbb{R}^{d}$, redefining the affine maps with
projections otherwise. So there exist $y_{1},\dots ,y_{d}\in K$ such that $%
\{y_{1},\dots ,y_{d}\}$ is linearly independent. Let ${Y}$ be the linear
transformation that maps $e_{i}$ onto $y_{i}$. As this is a change of basis, 
${Y}$ must be invertible.

Let $B_{R}=B(0,R)$ be the closed ball of radius $R$ and choose $R$ large
enough such that $f_{i}(B_{R})\subset B_{R}$ for all $i$. Then, $%
f_{j}(f_{i}(B_{R}))\subset f_{j}(B_{R})$, and generally $f_{vw}(B_{R})%
\subset f_{v}(B_{R})$ for all non-empty $v,w\in \{1,\dots ,N\}^{\ast }$.
Observe that the composition of affine maps is itself affine and $\diam %
f_{v_{1}\dots v_{k}}(B_{1})=2\lVert {A}\rVert $, where ${A}={A_{v_{1}}}\dots 
{A}_{v_{k}}$ is the linear component of $f_{v_{1}\dots v_{k}}$. Let $K$ be
the minimal integer such that $(\max_{i}\{\lVert {A}_{i}\rVert
\})^{K}<\alpha _{{Y}}/(6R)$ where $\alpha _{{Y}}$ is as in Lemma \ref%
{thm:matrixBound}. Let $\Lambda =\{1,\dots ,N\}^{K}$ and define $%
M_{\emptyset }=B_{R}$ and $M_{v}=f_{v}(B_{R})$, bearing in mind that a word $%
v\in \Lambda ^{k}$ is of length $k\cdot K$. This definition clearly
satisfies \eqref{cond:subset} and \eqref{cond:decr} in the Moran
construction definition.

For \eqref{cond:notsmall} we note that for all $v\in \Lambda ^{k}$ and $j\in
\Lambda $, 
\begin{align*}
\diam M_{vj}& =\diam({A}_{v_{1}}\cdots {A}_{v_{(kK)}}{A}_{j_{1}}\cdots {A}%
_{j_{K}}(B_{R}))=2R\left\Vert {A}_{v_{1}}\cdots {A}_{v_{(kK)}}{A}%
_{j_{1}}\cdots {A}_{j_{K}}\right\Vert \\
& \geq 2R\left\Vert {A}_{v_{1}}\cdots {A}_{v_{(kK)}}\right\Vert \left\Vert ({%
A}_{j_{1}}\cdots {A}_{j_{K}})^{-1}\right\Vert ^{-1} \\
& =\left\Vert ({A}_{j_{1}}\cdots {A}_{j_{K}})^{-1}\right\Vert ^{-1}\diam({A}%
_{v_{1}}\cdots {A}_{v_{(kK)}}(B_{R})) \\
& =\left\Vert ({A}_{j_{1}}\cdots {A}_{j_{K}})^{-1}\right\Vert ^{-1}\diam %
M_{v}.
\end{align*}%
Since $\Lambda $ is finite and all ${A}_{i}$ are invertible, there exists a
constant 
\begin{equation*}
C_{1}=\min_{v\in \Lambda }\left\Vert ({A}_{v_{1}}\cdots {A}%
_{v_{K}})^{-1}\right\Vert ^{-1}>0
\end{equation*}%
such that \eqref{cond:notsmall} is satisfied.

Finally we check \eqref{cond:notbunching}. Let $v\in \Lambda ^{k}$ and
recall that ${Y}$ maps the basis $E$ onto a linearly independent set of
points in $K$. Using Lemma \ref{thm:matrixBound} we obtain, 
\begin{equation*}
\max_{e\in E}\{\lvert {A}_{v_{1}}\cdots {A}_{v_{(kK)}}{Y}\,e\rvert \}\geq
\alpha _{{Y}}\left\Vert {A}_{v_{1}}\cdots {A}_{v_{(kK)}}\right\Vert =\frac{%
\alpha _{{Y}}}{2R}\diam M_{v}.
\end{equation*}%
But then 
\begin{equation*}
\diam f_{v}({Y}E)\geq \frac{\alpha _{{Y}}}{2R}\diam M_{v}
\end{equation*}%
and as ${Y}e_{i}=y_{i}$, we have ${Y}E\subset F$ and $f_{v}({Y}E)\subset K$.
Thus there exist two points in $M_{v}\cap F$ that are at least $({\alpha _{{Y%
}}}/{(2R))}\diam M_{v}$ apart. Since these two points must be contained in $%
M_{vi}$ and $M_{vj}$, respectively, and $\diam M_{vi},\diam M_{vj}\leq ({%
\alpha _{{Y}}}/{(6R))}\diam M_{v}$ we must have $i\neq j$, and further 
\begin{equation*}
d(M_{vi},M_{vj})\geq ({\alpha _{{Y}}}/{(3R))}\diam M_{v}.
\end{equation*}%
Thus Condition \eqref{cond:notbunching} is satisfied.

Letting $m(M_v) = \mu([v])$ for all $v\in\Lambda^{*}$ gives the correct
measure on $M=F$. Checking the conditions on the weight function is
straightforward and left to the reader.
\end{proof}

\begin{remark}
It was observed by K\"{a}enm\"{a}ki and Lehrb\"{a}ck, see \cite[Lemma 3.1]%
{KL}, that any doubling measure supported on a uniformly perfect metric
space has positive lower dimension. Our results above show that there are
many measures with positive lower Assouad dimension that are, in general,
far from doubling.
\end{remark}

\subsection{Lower dimension of Bedford-McMullen carpets}

One example of a class of self-affine measures are the pushforward measures
given by a Bernoulli probability measure on Bedford-McMullen carpets. In
this subsection we compute the exact lower Assouad dimension of these
measures. The result is analogous to the upper Assouad dimension for sponges
given in \cite{FH} and due to its similarity we will only give a brief
sketch of its proof.

Let $2\leq m<n$ be integers and consider maps of the form $f_{i}(x)=Ax+t_{i}$%
, where $1\leq i\leq N,$ $A$ is the diagonal matrix $A=\diag(1/m,1/n)$ and $%
t_{i}=[a_{i}/m\;b_{i}/n]^\top$ for some integers $0\leq a_{i}<m$ and $0\leq
b_{i}<n$. The attractor of the IFS $\{f_{1},\dots ,f_{N}\}$ is known as a
Bedford-McMullen carpet. If there exists $\varepsilon >0$ such that all $%
f_{i}([-\varepsilon ,1+\varepsilon ]^{2})$ are pairwise disjoint, we say
that the iterated function system satisfies the very strong separation
condition.

Given $p_{i}>0$ such that $\sum p_{i}=1$, let $\mu $ be the pushforward
measure of the Bernoulli measure on $\{1,\dots ,N\}^{\N}$ under the IFS. The
lower Assouad dimension of this self-affine measure is characterized by
finding a minimising column. We write 
\begin{equation*}
p_{\col}(i)=\sum_{\substack{ j\in\{1,\dots,N\}  \\ a_j=a_i}}p_j
\end{equation*}
for the measure of the column containing $f_i([0,1])$, that is $p_{\col}(i)
= \mu( [a_i/m,(a_i+1)/m]\times [0,1])$.

\begin{theorem}
Let $\mu $ be the self-affine measure of Bedford-McMullen type with
associated probabilities $p_{i}$ and contractions $f_{i}$. If the very
strong separation condition holds, then 
\begin{equation}  \label{eq:dimFormula}
\dimL\mu =\min_{1\leq j\leq N}\frac{-\log p_{\col}(j)}{\log m}+\min_{1\leq
i\leq N}\frac{\log p_{col}(i)/p_i}{\log n}.
\end{equation}
Furthermore, $\underline{H}(t)=\dimL\mu$ for small enough $t$ and so $\dimqL%
\mu=\dimL\mu$.
\end{theorem}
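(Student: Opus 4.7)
The plan is to reduce the problem to understanding the mass of small balls centered at points of the carpet. For $x\in\supp\mu$ with symbolic address $(i_1,i_2,\dots)$ and any $r>0$, define the horizontal level $k(r)=\lceil\log(1/r)/\log m\rceil$ and the vertical level $L(r)=\lceil\log(1/r)/\log n\rceil$, so that $L(r)\leq k(r)$ since $m<n$. Using the very strong separation condition, the set $B(x,r)\cap\supp\mu$ is captured, up to a uniform multiplicative constant, by the ``approximate square'' cylinder of horizontal depth $k(r)$ and vertical depth $L(r)$. This yields
\begin{equation*}
\mu(B(x,r))\;\asymp\; \prod_{j=1}^{L(r)} p_{i_j}\cdot\prod_{j=L(r)+1}^{k(r)} p_{\col}(i_j),
\end{equation*}
the column probabilities appearing because at levels $j>L(r)$ the ball is too small vertically to resolve individual maps sharing a column.

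A direct computation from this formula (splitting into the two geometric cases $L(r)\leq k(R)$ and $L(r)>k(R)$ and using $-\log p_{i_j}=\log(p_{\col}(i_j)/p_{i_j})+(-\log p_{\col}(i_j))$ in the latter) shows that in either case one arrives at the same expression: for every $R>r$,
\begin{equation*}
\log\frac{\mu(B(x,R))}{\mu(B(x,r))}=\sum_{j=L(R)+1}^{L(r)}\log\frac{p_{\col}(i_j)}{p_{i_j}}+\sum_{j=k(R)+1}^{k(r)}\bigl(-\log p_{\col}(i_j)\bigr)+O(1).
\end{equation*}
Both summands are nonnegative. Bounding each term below by its minimum, and using $L(r)-L(R)=\log(R/r)/\log n+O(1)$ together with $k(r)-k(R)=\log(R/r)/\log m+O(1)$, gives $\dimL\mu\geq$ the right-hand side of \eqref{eq:dimFormula}.

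For the matching upper bound, let $i^*$ minimise $\log(p_{\col}(i)/p_i)$ and let $j^*$ maximise $p_{\col}$. For each large $k$ set $k'=\lfloor k\log n/\log m\rfloor$, so that the intervals $(\lceil k\log m/\log n\rceil,\,k]$ and $(k,\,k']$ are disjoint. Choose $x\in\supp\mu$ whose address uses $i^*$ on the first interval and $j^*$ on the second, filling the remaining digits arbitrarily. For $R=m^{-k}$ and $r=m^{-k'}$ the two sums above simplify to exactly $(L(r)-L(R))\log(p_{\col}(i^*)/p_{i^*})+(k'-k)(-\log p_{\col}(j^*))$, and dividing by $\log(R/r)=(k'-k)\log m$ produces precisely the claimed exponent as $k\to\infty$. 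This gives the reverse inequality and hence the formula for $\dimL\mu$.

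For the quasi-lower assertion, the construction above satisfies $r/R=m^{k-k'}\asymp R^{(1-\theta)/\theta}$ with $\theta=\log m/\log n<1$, so $r\leq R^{1+t}$ for every $0\leq t\leq (1-\theta)/\theta$. Hence $\underline{H}(t)\leq$ RHS for all such $t$; the lower bound of the previous step holds for arbitrary $R>r$ and is therefore uniform in $t$, so $\underline{H}(t)\geq$ RHS. Thus $\underline{H}(t)=\dimL\mu$ on $[0,(1-\theta)/\theta]$, and $\dimqL\mu=\dimL\mu$ follows. The main obstacle is Step~1, namely establishing the clean $\asymp$ formula for the ball mass; the very strong separation condition is what makes this work, since it prevents mass from neighbouring cylinders bleeding into $B(x,r)$ in an uncontrolled way. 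Everything after that is bookkeeping and careful tracking of the two index ranges.
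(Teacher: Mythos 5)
Your proposal is correct and follows essentially the same approach as the paper: both use approximate squares, compute the mass of balls through the product formula with $p_{i_j}$ up to the vertical depth and $p_{\col}(i_j)$ from the vertical to the horizontal depth, and then analyse the ratio for nested approximate squares, deferring the transition from approximate squares to actual balls to the very strong separation condition as in \cite{FH}. The only (cosmetic) difference is that you rewrite $-\log p_{i_j}$ as $\log(p_{\col}(i_j)/p_{i_j})-\log p_{\col}(i_j)$ to fold the two geometric orderings of $k_1(R),k_2(r)$ into a single nonnegative two-sum expression for $\log\bigl(\mu(B(x,R))/\mu(B(x,r))\bigr)$, whereas the paper handles the two cases in \eqref{eq:cases} separately; the extremal-address construction for the matching upper bound and the resulting range of $t$ for which $\underline{H}(t)$ is constant are identical in substance to the paper's argument.
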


\begin{proof}
The key idea to establishing this dimension result are \textquotedblleft
approximate squares\textquotedblright, see \cite{FH} for details.
Heuristically, an approximate square is a collection of words such that the
corresponding set has uniformly comparable base and height, i.e.\ is
`almost' a square. We will construct approximate squares below and check
that they give rise to the dimension formula. The details that allow us to
transition from nested approximate squares to balls are based on the very
strong separation condition and contained in \cite{FH}; we decided to omit
them for brevity.

Let $k_{1}(R)$ and $k_{2}(R)$ be the unique integers such that $%
m^{-k_{1}(R)}\leq R<m^{-k_{1}(R)+1}$ and $n^{-k_{2}(R)}\leq
R<n^{-k_{2}(R)+1} $. Due to the common diagonal structure of the linear part
of $f_{i}$, the image $f_{v}([0,1]^{2})$ will be a rectangle aligned with
the first and second coordinate. If $v$ has length $k_{2}(R)$, the rectangle 
$f_{v}([0,1]^{2})$ will have height in $(n^{-1}R,R]$. Similarly, for any
word of length $k_{1}(R)$, the corresponding rectangle will have base in $%
(m^{-1}R,R]$. Given $0<r<R<1$ let $v_{r}\in \Lambda ^{k_{2}(r)}$ and $%
w_{r}\in \Lambda ^{\ast }$ such that $v_{r}w_{r}\in \Lambda ^{k_{1}(r)}$ and
consider the set 
\begin{equation*}
Q_{r}=\bigcup_{w\in \Lambda ^{\ast }}\{f_{v_{r}w}([0,1]^{2})\,:\,v_{r}w\in
\Lambda ^{k_{1}(r)}\text{ and }a_{(v_{r}w)_{i}}=a_{(v_{r}w_{r})_{i}}\text{
for all }1\leq i\leq k_{1}(r)\},
\end{equation*}%
that is the set of all images of words that have $v_{r}$ as the ancestor
(whose rectangle has height comparable to $r$) such that each rectangle
associated with $v_{r}w$ has base comparable to $r$ and the horizontal
translations all agree so all $f_{v_{r}w}([0,1]^{2})$ align in the same
column as $f_{v_{r}w_{r}}([0,1]^{2})$. Therefore $Q_{r}$ must have height
and base comparable to $r$ and is a (generic) approximate square. Its parent
approximate square of size $R$ is denoted by $Q_{R}$ and is the set given by 
\begin{equation*}
Q_{R}=\bigcup_{w\in \Lambda ^{\ast }}\{f_{v_{R}w}([0,1]^{2})\,:\,v_{R}w\in
\Lambda ^{k_{1}(R)}\text{ and }a_{(v_{R}w)_{i}}=a_{(v_{r}w_{r})_{i}}\text{
for all }1\leq i\leq k_{1}(R)\},
\end{equation*}%
where $v_{R}\in \Lambda ^{k_{2}(R)}$ is the parent word of $v_{r}$.

As mentioned above, it is sufficient to check $\mu (Q_{R})/\mu (Q_{r})$ for
all arbitrary approximate squares of the above form. Their measures are 
\begin{equation*}
\mu
(Q_{R})=\prod_{i=1}^{k_{2}(R)}p_{(v_{r}w_{r})_{i}}%
\prod_{i=k_{2}(R)+1}^{k_{1}(R)}p_{\col}((v_{r}w_{r})_{i})
\end{equation*}%
and 
\begin{equation*}
\mu
(Q_{r})=\prod_{i=1}^{k_{2}(r)}p_{(v_{r}w_{r})_{i}}%
\prod_{i=k_{2}(r)+1}^{k_{1}(r)}p_{\col}((v_{r}w_{r})_{i}).
\end{equation*}%
Notice that by definition we must either have 
\begin{equation}
k_{2}(R)<k_{1}(R)<k_{2}(r)<k_{1}(r)\quad \text{ or }\quad
k_{2}(R)<k_{2}(r)<k_{1}(R)<k_{1}(r).  \label{eq:cases}
\end{equation}%
In the first case we get 
\begin{align*}
\frac{\mu (Q_{R})}{\mu (Q_{r})}& =\prod_{i=k_{2}(R)+1}^{k_{1}(R)}p_{\col%
}((v_{r}w_{r})_{i})/p_{(v_{r}w_{r})_{i}}\left(
\prod_{i=k_{1}(R)+1}^{k_{2}(r)}p_{(v_{r}w_{r})_{i}}%
\prod_{i=k_{2}(r)+1}^{k_{1}(r)}p_{\col}((v_{r}w_{r})_{i})\right) ^{-1} \\
& \geq \left( \min_{1\leq j\leq N}\frac{p_{\col}(j)}{p(j)}\right)
^{k_{1}(R)-k_{2}(R)-1}\left( \min_{1\leq j\leq N}p(j)^{-1}\right)
^{k_{2}(r)-k_{1}(R)-1}\left( \min_{1\leq j\leq N}p_{\col}(j)^{-1}\right)
^{k_{1}(r)-k_{2}(r)-1} \\
& \geq C\left( \min_{1\leq j\leq N}\frac{p_{\col}(j)}{p(j)}\right)
^{k_{1}(R)-k_{2}(R)}\left( \min_{1\leq j\leq N}p_{\col}(j)^{-1}\right)
^{k_{1}(r)-k_{2}(r)},
\end{align*}%
for some $C>0$. Note that $k_{1}(t)-k_{2}(t)\sim \log (1/t)$ and so the
lower bound increases to infinity as $R\rightarrow 0$ and $r\rightarrow 0$,
irrespective of $R/r$. On the other hand, in the second case, we obtain 
\begin{align*}
\frac{\mu (Q_{R})}{\mu (Q_{r})}& =\left( \prod_{i=k_{2}(R)+1}^{k_{1}(R)}p_{%
\col}((v_{r}w_{r})_{i})\right) \left(
\prod_{i=k_{2}(R)+1}^{k_{2}(r)}p_{(v_{r}w_{r})_{i}}%
\prod_{i=k_{2}(r)+1}^{k_{1}(r)}p_{\col}((v_{r}w_{r})_{i})\right) ^{-1} \\
& =\left( \prod_{i=k_{2}(R)+1}^{k_{2}(r)}p_{\col}((v_{r}w_{r})_{i})\right)
\left(
\prod_{i=k_{2}(R)+1}^{k_{2}(r)}p_{(v_{r}w_{r})_{i}}%
\prod_{i=k_{1}(R)+1}^{k_{1}(r)}p_{\col}((v_{r}w_{r})_{i})\right) ^{-1} \\
& =\prod_{i=k_{2}(R)+1}^{k_{2}(r)}p_{\col%
}((v_{r}w_{r})_{i})/p_{(v_{r}w_{r})_{i}}\prod_{i=k_{1}(R)+1}^{k_{1}(r)}p_{%
\col}((v_{r}w_{r})_{i})^{-1} \\
& \geq C\left( \min_{1\leq j\leq N}\frac{p_{\col}(j)}{p_{j}}\right)
^{k_{2}(r)-k_{2}(R)}\left( \min_{1\leq j\leq N}p_{\col}(j)^{-1}\right)
^{k_{1}(r)-k_{1}(R)} \\
& =C\left( \min_{1\leq j\leq N}\frac{p_{\col}(j)}{p_{j}}\right) ^{\log
(R/r)/\log n}\left( \min_{1\leq j\leq N}p_{\col}(j)^{-1}\right) ^{\log
(R/r)/\log m}=C\left( \frac{R}{r}\right) ^{s}
\end{align*}%
for some uniform $C>0$ and $s$ as in \eqref{eq:dimFormula}. This shows that $%
\dimL\mu \geq s$.

Lastly, the second behaviour in \eqref{eq:cases} occurs when $r>
R^{1+\delta} $, where $1+\delta = \log n / \log m$. Therefore there exists a
word such that this minimum is achieved and we obtain $\underline{H}(t)\leq
s $ and $\underline{H}(t)$ is constant for $0<t<\delta$.
\end{proof}

\section{The lower Assouad dimension for self-similar measures of finite
type \label{FT}}

\subsection{Finite type measures}

In this section, we will prove that for a class of self-similar measures on $%
\mathbb{R}$, called finite type, the lower Assouad dimension coincides with
the minimal lower local dimension of the measure (Theorem~\ref{thm:localdim}%
). Many interesting self-similar measures that fail the open set condition
are of finite type, such as Bernoulli convolutions with Pisot contractions.
We begin by explaining what is meant by finite type.

Assume we are given an IFS of similarities, $S_{j}(x)=r_{j}x+d_{j}:\mathbb{%
R\rightarrow R}$ for $j=1,\dots,N$, where $N\geq 2$ and $0<\left\vert
r_{j}\right\vert <1,$ and probabilities $\{p_{k}\}_{j=1}^{N}$. By rescaling
and translation, there is no loss in assuming the convex hull of the
self-similar set $K$ is $[0,1]$. We let $\mu $ denote the self-similar
measure, $\mu (E)=\sum_{j=1}^{N}p_{j}\mu (S_{j}^{-1}(E))$.

Given any integer $n$ and $v=(v_{j})_{j=1}^{n}\in \{1,\dots,N\}^{n},$ we let 
$v^{-}=(v_{1},\dots,v_{n-1})$, $r_{v}=\prod_{i=1}^{n}r_{v_{i}}$ and $%
p_{v}=\prod_{j=1}^{n}p_{v_{j}}$. Put%
\begin{equation*}
\lambda =\min_{j=1,\dots,N}\left\vert r_{j}\right\vert
\end{equation*}%
and 
\begin{equation*}
\Lambda _{n}=\{v\in \{1,\dots ,N\}^{\ast }:\left\vert r_{v}\right\vert \leq
\lambda ^{n}\text{ and }\left\vert r_{v^{-}}\right\vert >\lambda ^{n}\}.
\end{equation*}

The notion of finite type was introduced by Ngai and Wang in \cite{NW}. The
definition we will use is slightly less general, but is simpler and includes
all the examples in $\mathbb{R}$ that we are aware of.

\begin{definition}
Assume $\{S_{j}\}$ is an IFS of similarities. The words $v,w \in \Lambda
_{n} $ are said to be neighbours if $S_{v}(0,1)\cap S_{w}(0,1)\neq \emptyset 
$. Denote by $\mathcal{N}(v)$ the set of all neighbours of $v$. We say that $%
v\in \Lambda _{n}$ and $w\in \Lambda _{m}$ have the same neighbourhood type
if there is a map $f(x)=\pm \lambda ^{n-m}x+c$ such that 
\begin{equation*}
f\circ S_{v}=S_{w}\text{ and }\{f\circ S_{u}:u \in \mathcal{N}%
(v)\}=\{S_{t}:t \in \mathcal{N}(w)\}.
\end{equation*}%
The IFS is said to be of \textbf{finite type} if there are only finitely
many neighbourhood types. Any associated self-similar measure is also said
to be of \textbf{finite type}.
\end{definition}

It was shown in \cite{Ng} that an IFS of finite type satisfies the weak
separation condition, but not necessarily the open set condition. For
instance, the IFS given by $S_{j}(x)=\pm \rho ^{-n_{j}}x+b_{j}$ where $\rho $
is a Pisot number\footnote{%
A Pisot number is an algebraic number greater than one, all of whose Galois
conjugates are strictly less than one in modulus. An example is the golden
mean.}, $n_{j}\in \mathbb{N}$ and $b_{j}\in \mathbb{Q[\rho ]}$, was shown to
be of finite type in \cite[Theorem 2.9]{NW}, but fails the open set
condition. The Bernoulli convolutions with contraction factors that are
inverses of Pisot numbers are self-similar measures associated with an IFS
of this form. As integers are also Pisot numbers, the self-similar measures
coming from an IFS $\{S_{j}(x)=x/d+j(d-1)/d\}_{j=0}^{m-1}$, for integer $%
d\geq 3,$ such as $m$-fold convolutions of the uniform Cantor measure on the
Cantor set of ratio $1/d,$ are another class of finite type measures.

\begin{definition}
For each positive integer $n$, let $h_{1},\dots ,h_{s_{n}}$ be the
collection of elements of the set $\{S_{v}(0),S_{v}(1):v \in \Lambda _{n}\}$%
, listed in increasing order. Set 
\begin{equation*}
\mathcal{F}_{n}=\{[h_{j},h_{j+1}]:1\leq j\leq s_{n}-1\text{ and }%
(h_{j},h_{j+1})\cap K\neq \emptyset \}\text{.}
\end{equation*}%
Elements of $\mathcal{F}_{n}$ are known as the \textbf{net intervals of
level }$n$.
\end{definition}

For each $\Delta \in \mathcal{F}_{n}$, $n\geq 1$, there is a unique element $%
\widehat{\Delta }\in \mathcal{F}_{n-1}$ which contains $\Delta ,$ called the 
\textit{parent} (of \textit{child} $\Delta )$. Given $\Delta =[a,b]\in 
\mathcal{F}_{n}$, we denote the \textit{normalized length} of $\Delta $ by $%
\ell _{n}(\Delta )=\lambda ^{-n}(b-a)$. By the \textit{neighbour set} of $%
\Delta $ we mean the ordered tuple 
\begin{equation*}
V_{n}(\Delta )=((a_{1},L_{1}),(a_{2},L_{2}),\dots ,(a_{j},L_{j})),
\end{equation*}%
where for each $i$ there is some $v\in \Lambda _{n}$ such that $\lambda
^{-n}r_{v}=L_{i}$ and $\lambda ^{-n}(a-S_{v}(0))=a_{i}$. Suppose $\Delta \in 
\mathcal{F}_{n}$ has parent $\widehat{\Delta }$. If $\widehat{\Delta }$ has
multiple children with the same normalized length and neighbourhood set as $%
\Delta ,$ order them from left to right as $\Delta _{1},\Delta _{2},\dots
,\Delta _{T}$. Let $t_{n}(\Delta )\in \{1,\dots ,T\}$ be the integer $t$
such that $\Delta _{t}=\Delta$.

\begin{definition}
The \textbf{characteristic vector}\textit{\ of }$\Delta \in \mathcal{F}_{n}$
is defined to be the triple 
\begin{equation*}
\mathcal{C}_{n}(\Delta )=(\ell _{n}(\Delta ),V_{n}(\Delta ),t_{n}(\Delta )).
\end{equation*}
\end{definition}

A very important fact, shown in \cite[Theorem 2.7]{HHS}, is that an IFS\ of
finite type admits only finitely many characteristic vectors. The
characteristic vectors are of fundamental importance because, as we will
see, we can obtain key information about the local behaviour of any
associated self-similar measure from them.

By the \textbf{symbolic representation} of a net interval $\Delta \in 
\mathcal{F}_{n}$ we mean the $(n+1)$-tuple $(\mathcal{C}_{0}(\Delta
_{0}),\dots,\mathcal{C}_{n}(\Delta _{n})),$ where $\Delta _{0}=[0,1]$, $%
\Delta _{n}=\Delta $, and for each $j=1,\dots,n$, $\Delta _{j-1}$ is the
parent of $\Delta _{j}$. Similarly, for each $x\in K=\supp\mu $ the symbolic
representation of $x$ will be the sequence of characteristic vectors $[x]=(%
\mathcal{C}_{0}(\Delta _{0}),\mathcal{C}_{1}(\Delta _{1}),\dots)$ where $%
x\in \Delta _{n}\in \mathcal{F}_{n}$ for each $n$ and $\Delta _{j-1}$ is the
parent of $\Delta _{j}$. Conversely, every sequence of characteristic
vectors $(\gamma _{0},\gamma _{1},\dots)$ where $\gamma _{0}=\mathcal{C}%
_{0}(\Delta _{0})$ and $\gamma _{j}$ is the parent of $\gamma _{j+1}$ is the
symbolic representation of a unique $x\in K$. We will write $\Delta _{n}(x)$
for a net interval of level $n$ containing $x$.

By a \textbf{path} we mean a segment of a symbolic representation. A \textbf{%
loop class }is a set of characteristic vectors $\mathcal{L}$ with the
property that given any $\chi ,\psi \in \mathcal{L}$ there is some finite
path $\eta $ in $\mathcal{L}$ so that $(\chi ,\eta ,\psi )$ is a path (in $%
\mathcal{L}$).

\begin{definition}
Let $\Delta =[a,b]\in \mathcal{F}_{n}$ and let $\widehat{\Delta }=[c,d]\in 
\mathcal{F}_{n-1}$ denote its parent net interval. Assume $V_{n}(\Delta
)=((a_{1},L_{1}),\dots ,(a_{I},L_{I}))$ and $V_{n-1}(\widehat{\Delta }%
)=((c_{1},M_{1}),\dots ,(c_{J},M_{J}))$. The \textbf{primitive transition
matrix, }denoted 
\begin{equation*}
T(\mathcal{C}_{n-1}(\widehat{\Delta }),\mathcal{C}_{n}(\Delta )),
\end{equation*}%
is the $I\times J$ matrix whose $(i,j)^{\prime }th$ entry, $T_{ij},$ is
defined as follows: Put $T_{ij}=p_{\omega }$ if there exists $v\in \Lambda
_{n-1}$ with $v\omega \in \Lambda _{n}$, $S_{v\omega }(x)=\lambda
^{n}(L_{i}x-a_{i})+a$ and $S_{v}(x)=\lambda ^{n-1}(M_{j}x-c_{j})+c$. If
there is no such $\omega $, we put $T_{ij}=0$.
\end{definition}

Given a path $(\gamma _{J},\gamma _{J+1},\dots,\gamma _{N})$, we write $%
T(\gamma _{J},\gamma _{J+1},\dots,\gamma _{N})$ for the product 
\begin{equation*}
T(\gamma _{J},\gamma _{J+1},\dots,\gamma _{N})=T(\gamma _{J},\gamma
_{J+1})T(\gamma _{J+1},\gamma _{J+2})\cdot \cdot \cdot T(\gamma
_{N-1},\gamma _{N}).
\end{equation*}
For brevity we write $\left\Vert (T_{ij})\right\Vert =\sum_{i,j}\left\vert
T_{ij}\right\vert $ and note the following critical fact proven in \cite[\S %
3.2]{HHS}.

\begin{lemma}
\label{trans}There are constants $a,b>0$ such that whenever $\Delta _{n}$ is
a net interval of level $n$ with symbolic representation $(\gamma
_{0},\gamma _{1},\dots,\gamma _{n})$, then 
\begin{equation*}
a\left\Vert T(\gamma _{0},\gamma _{1},\dots,\gamma _{n})\right\Vert \leq \mu
(\Delta _{n})\leq b\left\Vert T(\gamma _{0},\gamma _{1},\dots,\gamma
_{n})\right\Vert .
\end{equation*}
\end{lemma}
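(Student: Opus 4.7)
The plan is to combine the self-similarity of $\mu$ with the finiteness of the set of characteristic vectors that is guaranteed by the finite type condition. Iterating the self-similarity relation gives
\[
\mu(\Delta_n) \;=\; \sum_{v \in \Lambda_n}\, p_v\, \mu(S_v^{-1}(\Delta_n)),
\]
and only those $v \in \Lambda_n$ for which $S_v([0,1])$ overlaps the interior of $\Delta_n$ contribute a nonzero term. Because the endpoints of net intervals at level $n$ are chosen from the set $\{S_v(0), S_v(1) : v \in \Lambda_n\}$, these contributing words are precisely the ones indexing the entries $(a_i,L_i)$ of the neighbor set $V_n(\Delta_n)$; in particular, for each such $v$ one has $\Delta_n \subseteq S_v([0,1])$, so $S_v^{-1}(\Delta_n)$ is a subinterval of $[0,1]$.

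Next I would interpret the transition matrix product. From the definition $T(\gamma_{n-1},\gamma_n)_{ij}=p_\omega$, where $v\omega\in\Lambda_n$ is attached to neighbor $i$ at level $n$ and $v\in\Lambda_{n-1}$ is attached to neighbor $j$ at level $n-1$, one checks by induction on $n$ that the $i$-th coordinate of the product $T(\gamma_0,\ldots,\gamma_n)$ equals the total probability $\sum p_v$ taken over all $v\in\Lambda_n$ realizing the $i$-th neighbor slot of $\Delta_n$. Summing over $i$ yields
\[
\|T(\gamma_0,\ldots,\gamma_n)\|\;=\;\sum_{v\in\Lambda_n,\;\Delta_n\subseteq S_v([0,1])} p_v.
\]

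It then remains to control the factors $\mu(S_v^{-1}(\Delta_n))$ uniformly. The upper bound $\mu(S_v^{-1}(\Delta_n))\le 1$ produces the constant $b$ immediately. For the lower bound, the pre-image $S_v^{-1}(\Delta_n)\subseteq[0,1]$ is determined by the normalized length $\ell_n(\Delta_n)$ and the neighbor datum $(a_i,L_i)$, both of which are components of the characteristic vector $\gamma_n$. Since the finite type assumption restricts $\gamma_n$ to finitely many values, $S_v^{-1}(\Delta_n)$ takes only finitely many possible shapes. Each such shape meets the support $K$ of $\mu$, because $\Delta_n$ is by definition a net interval and hence $\Delta_n\cap K\neq\emptyset$; consequently each of these finitely many intervals carries positive $\mu$-measure, and the minimum of these positive numbers supplies the constant $a$.

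The main obstacle I expect is the bookkeeping in the second step: carefully verifying that the inductive description of the matrix product entries parameterizes the set of $v\in\Lambda_n$ with $\Delta_n\subseteq S_v([0,1])$ in the correct way, matching distinct paths of neighbor slots to distinct words $v$ via the primitive transitions. Once this combinatorial correspondence is established, the conclusion follows from the finite-type hypothesis together with the observation that every net interval, by definition, meets the support of $\mu$.
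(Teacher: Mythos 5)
The paper does not actually prove this lemma; it quotes it as a ``critical fact proven in [HHS, \S 3.2],'' so there is no internal proof to compare against. Your reconstruction goes in the right direction: decompose $\mu(\Delta_n)$ via self-similarity over the section $\Lambda_n$, identify $\|T(\gamma_0,\dots,\gamma_n)\|$ with $\sum p_v$ over the neighbours $v$ of $\Delta_n$, and then bound the conditional masses $\mu(S_v^{-1}(\Delta_n))$ uniformly. The upper bound via $\mu(S_v^{-1}(\Delta_n))\le 1$ is sound, and the combinatorial claim you flag as the main obstacle (that the coordinates of the matrix product enumerate, with multiplicity, the $v\in\Lambda_n$ with $\Delta_n\subseteq S_v([0,1])$) is indeed what needs checking; it works because every neighbour of $\widehat\Delta$ has a descendant among the neighbours of $\Delta$, so no mass is lost along the way.

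The gap is in the lower bound, and you did not flag it. You claim that each preimage $S_v^{-1}(\Delta_n)$ meets $K$ ``because $\Delta_n\cap K\neq\emptyset$,'' but this inference is not valid. The net-interval condition gives $\Delta_n^\circ\cap K\neq\emptyset$, and $K=\bigcup_{w\in\Lambda_n}S_w(K)$, so $\Delta_n^\circ$ meets $S_w(K)$ for at least one neighbour $w$; it need not meet $S_v(K)$ for every neighbour $v$. When the IFS overlaps, $\Delta_n$ can sit inside a gap of $S_v(K)$ while still being contained in $S_v([0,1])$ (the mass in $\Delta_n$ coming from some other piece $S_w(K)$). For such a ``phantom'' neighbour one has $\mu(S_v^{-1}(\Delta_n))=0$, that term contributes to $\|T\|$ but not to $\mu(\Delta_n)$, and the minimum-over-finitely-many-values step fails because not all those values are positive. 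To close the argument you would need either to rule out phantom neighbours in the finite-type setting, or to show that the entries of the vector $T(\gamma_0,\dots,\gamma_n)$ are uniformly comparable (using that every column of each primitive transition matrix has a nonzero entry together with the finiteness of characteristic vectors), so that the entry corresponding to a neighbour with $\mu(S_v^{-1}(\Delta_n))>0$ already carries a bounded-below fraction of $\|T\|$. As written, the lower bound is not established.
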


This lemma is useful because the lower Assouad dimensions for self-similar
measures of finite type can be deduced from the knowledge of the measure of
net intervals, as we see next.

\begin{lemma}
\label{thm:bddNets} If $\mu $ is of finite type and $\dimL\mu \leq d$, then
for each $\varepsilon >0$ there are $x_{i}\in \supp\mu $ and net intervals $%
\Delta _{N_{i}}(x_{i})\supseteq \Delta _{n_{i}}(x_{i})$ with $%
n_{i}-N_{i}\rightarrow \infty $ such that 
\begin{equation}
\frac{\mu (\Delta _{N_{i}}(x_{i}))}{\mu (\Delta _{n_{i}}(x_{i}))}<\lambda
^{(d+\varepsilon )(N_i - n_{i})}\text{.}  \label{3}
\end{equation}
\end{lemma}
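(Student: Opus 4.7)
The plan is to unpack the failure of $s := d+\varepsilon/2$ as an admissible exponent in the definition of $\dimL\mu$, and then transfer the resulting small-ratio estimate from balls to net intervals using the finite type structure.

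Since $s > \dimL\mu = \underline{H}(0)$, the value $s$ fails the defining supremum condition, so for any $c_1, c_2 > 0$ there exist $x \in \supp\mu$ and $0 < r \leq R \leq c_1$ with $\mu(B(x,R))/\mu(B(x,r)) < c_2 (R/r)^s$. Applying this with $c_1 = c_2 = 1/k$ produces $x_k \in \supp\mu$ and radii $r_k \leq R_k \leq 1/k$ satisfying
\[
\frac{\mu(B(x_k, R_k))}{\mu(B(x_k, r_k))} < \frac{1}{k}\left(\frac{R_k}{r_k}\right)^s.
\]
Since the left side is at least $1$, this also forces $R_k/r_k \geq k^{1/s} \to \infty$.

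Next I translate the radii into net-interval levels. Because $\mu$ is of finite type, there are only finitely many characteristic vectors, so the normalized lengths of net intervals lie in some interval $[c_0, C_0] \subseteq (0, \infty)$. Let $N_k$ and $n_k$ be the least integers with $C_0 \lambda^{N_k} \leq R_k/5$ and $C_0 \lambda^{n_k} \leq r_k/5$; then $\lambda^{N_k} \asymp R_k$, $\lambda^{n_k} \asymp r_k$, and $n_k - N_k \to \infty$. Since each level-$n_k$ net interval has length at least $c_0 \lambda^{n_k}$, the ball $B(x_k, r_k)$ meets at most a uniform constant $C$ of them; pick one, $\Delta^*$, of largest $\mu$-measure and choose $y_k \in \Delta^* \cap \supp\mu$. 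Then $\Delta_{n_k}(y_k) = \Delta^*$ satisfies $\mu(\Delta_{n_k}(y_k)) \geq \mu(B(x_k, r_k))/C$, while the $1/5$ margin in the choice of $N_k$ ensures $\Delta_{N_k}(y_k) \subseteq B(y_k, R_k/5) \subseteq B(x_k, R_k)$ for all $k$ large enough (once $|y_k - x_k| \leq 2 r_k \leq 4 R_k/5$).

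Combining the two containments gives
\[
\frac{\mu(\Delta_{N_k}(y_k))}{\mu(\Delta_{n_k}(y_k))} \leq \frac{C\,\mu(B(x_k, R_k))}{\mu(B(x_k, r_k))} < \frac{C}{k}\left(\frac{R_k}{r_k}\right)^{s} \leq \frac{C'}{k}\,\lambda^{-s(n_k - N_k)}.
\]
The ratio between this upper bound and the target $\lambda^{(d+\varepsilon)(N_k - n_k)} = \lambda^{-(d+\varepsilon)(n_k - N_k)}$ equals $(C'/k)\lambda^{(\varepsilon/2)(n_k - N_k)}$, which tends to $0$ since $\lambda < 1$ and $n_k - N_k \to \infty$. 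Hence for all sufficiently large $k$ the bound sits strictly below the target, and taking $x_i := y_{k_i}$ along such a subsequence completes the proof. The main obstacle is that finite type measures need not be doubling, so one cannot compare $\mu(B(x_k, CR_k))$ to $\mu(B(x_k, R_k))$ up to a constant; the $1/5$ safety margin in the choice of $N_k$ is engineered precisely to bypass this, so that after the base point shifts from $x_k$ to $y_k$ the net interval $\Delta_{N_k}(y_k)$ still sits inside $B(x_k, R_k)$ without any dilation.
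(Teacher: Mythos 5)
Your proof is correct, and it takes the contrapositive route to the paper's argument. The paper supposes the conclusion fails, so that $\mu(\Delta_N(x))/\mu(\Delta_n(x)) \geq \lambda^{(d+\varepsilon)(N-n)}$ uniformly once $n-N$ is large, and then propagates this lower bound from net intervals to balls (via the comparability $\diam \Delta_n \asymp \lambda^n$ and a case analysis over $\Delta_n(x)$, $\Delta_n^L$, $\Delta_n^R$) to conclude $\dimL\mu \geq d+\varepsilon$, a contradiction. You instead argue forward: since $s=d+\varepsilon/2 > \dimL\mu$, the definition of $\underline{H}(0)$ immediately yields ball pairs with $R_k/r_k\to\infty$ and ratio strictly below $(1/k)(R_k/r_k)^s$; you then transfer this small ball-ratio into a small net-interval ratio. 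The technical heart is identical in both directions, namely transferring between balls and net intervals without any doubling hypothesis, which forces a base-point shift: the paper moves from $x$ to $y$ in $\Delta_n^L$ or $\Delta_n^R$ and tracks what happens to the level-$N$ ancestor, while you move from $x_k$ to a maximizing $y_k\in\Delta^*$ and protect the containment $\Delta_{N_k}(y_k)\subseteq B(x_k,R_k)$ with the $1/5$ safety margin. Your construction is a bit more streamlined because it matches the existential form of the lemma and avoids the case split, at the cost of needing the slightly careful choice of margin; the two arguments are otherwise of comparable length and rely on exactly the same finite-type input (finitely many characteristic vectors $\Rightarrow$ normalized lengths bounded away from $0$ and $\infty$, and net intervals of a given level cover $\supp\mu$). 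One small point worth noting explicitly in your write-up: $\mu(B(x_k,r_k))\leq C\,\mu(\Delta^*)$ uses that $K\cap B(x_k,r_k)$ is covered by level-$n_k$ net intervals meeting the ball, which holds because every point of $K$ lies in some level-$n_k$ net interval; this is the same covering fact the paper uses implicitly in its inclusions $B(x,c\lambda^n)\cap\supp\mu\subseteq(\Delta_n(x)\cup\Delta_n^R\cup\Delta_n^L)\cap\supp\mu$.
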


\begin{proof}
Suppose the statement above is false. As there are only finitely many
characteristic vectors, all normalized lengths of net intervals are
comparable. Thus we may choose $c>0$ so that $\diam(\Delta _{n})\geq
c\lambda ^{n}$ for all net intervals $\Delta _{n}$ of level $n$. Given any
net interval, $\Delta _{n},$ of level $n$, we will write $\Delta _{n}^{R}$
and $\Delta _{n}^{L}$ for the adjacent net intervals of level $n$ to the
right and left of $\Delta _{n}$ respectively, should these exist.

Let $x\in \supp\mu $ and consider $r<R$ where $R/r\rightarrow \infty $.
Choose $N,n$ such that $3\lambda ^{N}\leq R<3\lambda ^{N-1}$ and $c\lambda
^{n+1}<r\leq c\lambda ^{n}$. Then $n-N\rightarrow \infty $ and 
\begin{equation*}
\frac{\mu (B(x,R))}{\mu (B(x,r))}\geq \frac{\mu (B(x,3\lambda ^{N}))}{\mu
(B(x,c\lambda ^{n}))}
\end{equation*}

As all net intervals of level $n$ have diameter between $c\lambda ^{n}$ and $%
\lambda ^{n}$, for any $x\in \supp\mu $ we have%
\begin{equation*}
B(x,3\lambda ^{N})\cap \supp\mu \supseteq (\Delta _{N}(x)\cup \Delta
_{N}^{R}\cup \Delta _{N}^{L})\cap \supp\mu
\end{equation*}%
and%
\begin{equation*}
B(x,c\lambda ^{n})\cap \supp\mu \subseteq (\Delta _{n}(x)\cup \Delta
_{n}^{R}\cup \Delta _{n}^{L})\cap \supp\mu \text{.}
\end{equation*}%
Thus 
\begin{equation*}
\mu (B(x,3\lambda ^{N}))\geq \max \{\mu (\Delta _{N}(x)),\mu (\Delta
_{N}^{R}),\mu (\Delta _{N}^{L})\},
\end{equation*}%
while%
\begin{equation*}
\mu (B(x,c\lambda ^{n}))\leq 3\max \{\mu (\Delta _{n}(x)),\mu (\Delta
_{n}^{R}),\mu (\Delta _{n}^{L})\}\text{.}
\end{equation*}

First, suppose $\mu (B(x,c\lambda ^{n}))\leq 3\mu (\Delta _{n}(x))$. Since
we are assuming \eqref{3} fails, 
\begin{equation*}
\frac{\mu (B(x,3\lambda ^{N}))}{\mu (B(x,c\lambda ^{n}))}\geq \frac{\mu
(\Delta _{N}(x))}{3\mu (\Delta _{n}(x))}\geq \frac{1}{3}\lambda
^{(d+\varepsilon )(N-n)}\geq C\left( \frac{R}{r}\right) ^{d+\varepsilon }
\end{equation*}
for a suitable constant $C,$ independent of $x,R,r$.

Otherwise, without loss of generality, $\mu (B(x,c\lambda ^{n}))\leq 3\mu
(\Delta _{n}^{L})$. Notice that $\Delta _{n}^{L}$ is either a child of $%
\Delta _{N}(x)$ or $\Delta _{N}^{L}$. If $\Delta _{n}^{L}\subseteq \Delta
_{N}(x)$ and we let $y\in \Delta _{n}^{L}\cap \supp\mu $, then $\Delta
_{N}(x)=\Delta _{N}(y)$ and $\Delta _{n}^{L}=\Delta _{n}(y),$ so we have 
\begin{equation*}
\frac{\mu (B(x,3\lambda ^{N}))}{\mu (B(x,c\lambda ^{n}))}\geq \frac{\mu
(\Delta _{N}(y))}{3\mu (\Delta _{n}(y))}\geq \frac{1}{3}\lambda
^{(d+\varepsilon )(N-n)}\geq C\left( \frac{R}{r}\right) ^{d+\varepsilon }.
\end{equation*}%
If, instead $\Delta _{n}^{L}\subseteq \Delta _{N}^{L}$ the arguments are
similar, just take $y$ to be the right endpoint of $\Delta _{n}^{L}$ and
then $\Delta _{N}^{L}=\Delta _{N}(y)$ and $\Delta _{n}^{L}=\Delta _{n}(y)$.

Consequently, 
\begin{equation*}
\frac{\mu (B(x,R))}{\mu (B(x,r))}\geq C\left( \frac{R}{r}\right)
^{d+\varepsilon }
\end{equation*}%
for all $x\in \supp\mu $ and $r<R$ with $R/r\rightarrow \infty $ and that
implies $\dimL\mu \geq d+\varepsilon $; a contradiction.
\end{proof}

\subsection{Lower Assouad dimension for measures of finite type}

We are now ready to prove the main result of this section.

\begin{theorem}
\label{thm:localdim} If $\mu $ is any self-similar measure of finite type,
then%
\begin{equation*}
\dimL\mu =\inf \{\underline{\dim }_{\loc}\mu (x):x\in \supp\mu \}.
\end{equation*}
\end{theorem}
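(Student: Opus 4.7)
The inequality $\dimL\mu \leq \inf_{x \in \supp\mu}\underline{\dim}_{\loc}\mu(x)$ is already contained in part~(ii) of the earlier Proposition, since $\dimL\mu \leq \dimqL\mu \leq \inf_x\underline{\dim}_{\loc}\mu(x)$. So the content of the theorem lies in the reverse direction; the plan is to construct, for each $\varepsilon>0$, a single point $x^*\in\supp\mu$ whose lower local dimension is at most $\dimL\mu+\varepsilon$, by exploiting finite type to promote the ``bad pair of scales'' produced by Lemma~\ref{thm:bddNets} into a recurrence structure on which the measure can be estimated.

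Write $d=\dimL\mu$ and fix $\varepsilon>0$. Lemma~\ref{thm:bddNets} supplies $(x_i,N_i,n_i)$ with $L_i:=n_i-N_i\to\infty$ and
\begin{equation*}
\frac{\mu(\Delta_{n_i}(x_i))}{\mu(\Delta_{N_i}(x_i))}\;\geq\;\lambda^{(d+\varepsilon)L_i}.
\end{equation*}
Finite type gives only finitely many characteristic vectors, so by two applications of pigeonhole I pass to a subsequence on which the characteristic vectors at levels $N_i$ and $n_i$ are constant, call them $\alpha$ and $\beta$. The sub-symbolic-paths $P_i=(\gamma_{N_i}^i,\dots,\gamma_{n_i}^i)$ are then paths of length $L_i\to\infty$ from $\alpha$ to $\beta$ in the finite directed graph of characteristic vectors, and so each $P_i$ must revisit vertices. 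The next step is to locate a vertex $\chi$ appearing in infinitely many $P_i$ together with a loop $C$ at $\chi$ (of length at most the number of characteristic vectors) whose transition matrix $T(C)$ satisfies $\|T(C)^k\|\geq c\,\lambda^{(d+\varepsilon)k|C|}$ for arbitrarily large $k$. I would extract $C$ by decomposing each $P_i$ into elementary cycles plus a bounded-length spanning piece, noting that the matrix-norm lower bound along $P_i$ combined with Lemma~\ref{trans} forces at least one cycle type to be traversed $\Theta(L_i)$ times while still carrying the slow-decay rate, then invoking Gelfand's formula to convert this repetition count into the desired spectral bound on $T(C)$.

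Once $C$ is in hand, I define $x^*$ by choosing any finite path in the characteristic-vector graph from the root $\gamma_0$ to $\chi$ (reachable since every characteristic vector is a descendant of $\gamma_0$) and then concatenating infinitely many copies of $C$. At the levels $n_k=|\text{prefix}|+k|C|$ corresponding to the $k$th return to $\chi$, Lemma~\ref{trans} yields $\mu(\Delta_{n_k}(x^*))\asymp\|T(\text{prefix})\cdot T(C)^k\|\gtrsim c\,\lambda^{(d+\varepsilon)k|C|}$, while $\diam\Delta_{n_k}(x^*)\asymp\lambda^{n_k}$. The comparability of ball measures with measures of neighbouring net intervals (exactly the argument used inside the proof of Lemma~\ref{thm:bddNets}) then transfers this to $\mu(B(x^*,r))\gtrsim r^{d+\varepsilon}$ along a sequence $r\downarrow 0$, giving $\underline{\dim}_{\loc}\mu(x^*)\leq d+\varepsilon$. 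Running this construction along a countable sequence $\varepsilon\downarrow 0$ produces points whose lower local dimensions converge to~$d$, so $\inf_x\underline{\dim}_{\loc}\mu(x)\leq d$, which finishes the argument.

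The principal obstacle is the cycle-extraction step. The transition matrices entering Lemma~\ref{trans} are neither positive nor irreducible in general, and the norm $\|\cdot\|$ is only submultiplicative, so translating the path-level bound $\|T(P_i)\|\gtrsim\lambda^{(d+\varepsilon)L_i}$ into an individual cycle bound $\rho(T(C))\geq\lambda^{(d+\varepsilon)|C|}$ is not automatic. I expect to have to track which column of the evolving matrix product carries the mass responsible for $\mu(\Delta_n(x_i))$, argue along that column that the repeated occurrences of some cycle type drive the decay, and then use finiteness of the cycle alphabet to pigeonhole onto a single cycle $C$; making this bookkeeping quantitative while preserving the factor $(d+\varepsilon)$ is the technical heart of the proof.
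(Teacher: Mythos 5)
Your outline diverges from the paper's argument at exactly the point you flag as the ``technical heart,'' and that divergence is the gap. You are trying to produce a single cycle $C$ in the characteristic-vector graph whose transition matrix has spectral radius at least $\lambda^{(d+\varepsilon)|C|}$, so that $x^*=(\text{prefix})\cdot C^{\infty}$ works. But the averaging/pigeonhole you describe only yields a cycle with $\Vert T(C)\Vert \geq \lambda^{(d+\varepsilon)|C|}$. Since $\rho(T(C))\leq\Vert T(C)\Vert$ and the primitive transition matrices are merely nonnegative (not primitive or irreducible, even on a loop class), there is no way to pass from a one-shot norm bound to a spectral-radius bound: $\Vert T(C)^k\Vert$ may decay much faster than $\Vert T(C)\Vert^k$. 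Gelfand's formula is an equality in the limit and gives you $\rho(T(C))$ only after you already control $\Vert T(C)^k\Vert$ for all large $k$, which is precisely what you lack. Nothing in the hypothesis of Lemma~\ref{thm:bddNets} forces the slow-decaying cycle to appear \emph{consecutively} many times inside $P_i$, so you cannot recover $\Vert T(C)^k\Vert\gtrsim\lambda^{(d+\varepsilon)k|C|}$ from the cycle decomposition either.

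The paper avoids this obstruction entirely by never asking for a repeating cycle. It decomposes $(\gamma^{(i)}_{N_i},\dots,\gamma^{(i)}_{n_i})$ not into elementary cycles but into a bounded number of maximal-loop-class segments joined by boundedly many short transition paths, then pigeonholes to extract a subsequence of segments $\rho_i$ that all lie in a fixed maximal loop class $\Lambda$, have lengths $\ell^{(i)}\to\infty$, and satisfy the one-shot norm bound $\Vert T(\rho_i)\Vert\geq\lambda^{\ell^{(i)}(d-\varepsilon)}$. The point $x$ is then built as $[x]=(\nu_1,\rho_{i_1},\nu_2,\rho_{i_2},\dots)$, stitching together \emph{different} long segments via short connectors inside $\Lambda$. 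Because each transition matrix has a nonzero entry in every column, prepending any fixed prefix costs only a multiplicative constant $c_{J-1}$, and one chooses $i_{J}$ so that $\ell^{(i_J)}$ dominates $|\log c_{J-1}|$. This replaces the spectral-radius question by a sequence of one-shot norm bounds, each applied once, which is exactly what Lemma~\ref{trans} delivers. If you want to repair your version along your own lines, you would need a positivity or primitivity argument for $T(C)$ (or for a suitable power of it) that upgrades $\Vert T(C)\Vert$-growth to $\rho(T(C))$-growth; absent such structure, switch to the paper's stitching scheme.
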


\begin{proof}
Throughout the proof $C$ will denote a positive constant that may change
from one occurrence to another. Let $d=\inf_{x}\{\underline{\dim }_{\loc}\mu
(x)\}$ and assume for a contradiction that $\dimL\mu <d,$ say $\dimL\mu
<d-3\varepsilon $ for $\varepsilon >0$. We will show that this implies the
existence of points which have local dimension strictly less than $d$.

By Lemma~\ref{thm:bddNets}, there are $x_{i}\in \supp\mu $ and $N_{i}<n_{i}$
such that $n_{i}-N_{i}\rightarrow \infty $ and 
\begin{equation*}
\frac{\mu (\Delta _{N_{i}}(x_{i}))}{\mu (\Delta _{n_{i}}(x_{i}))}<\lambda
^{(d-2\varepsilon )(N_i-n_{i})}\text{ for all }i\text{.}
\end{equation*}%
It follows from Lemma \ref{trans} that there is a constant $C$ such that if $%
\Delta _{n_{i}}(x_{i})$ has symbolic representation $(\gamma _{0},\gamma
_{1}^{(i)},\dots,\gamma _{n_{i}}^{(i)})$, then%
\begin{eqnarray*}
C\lambda ^{(d-2\varepsilon )(N_i - n_{i})} &\geq &C\frac{\mu (\Delta
_{N_{i}}(x_{i}))}{\mu (\Delta _{n_{i}}(x_{i}))}\geq \frac{\left\Vert
T(\gamma _{0},\gamma _{1}^{(i)},\dots,\gamma _{N_{i}}^{(i)})\right\Vert }{%
\left\Vert T(\gamma _{0},\gamma _{1}^{(i)},\dots,\gamma
_{n_{i}}^{(i)})\right\Vert } \\
&\geq &\frac{\left\Vert T(\gamma _{0},\gamma _{1}^{(i)},\dots,\gamma
_{N_{i}}^{(i)})\right\Vert }{\left\Vert T(\gamma _{0},\gamma
_{1}^{(i)},\dots,\gamma _{N_{i}}^{(i)})\right\Vert \left\Vert T(\gamma
_{N_{i}}^{(i)},\dots,\gamma _{n_{i}}^{(i)})\right\Vert }.
\end{eqnarray*}%
Thus%
\begin{equation}
\left\Vert T(\gamma _{N_{i}}^{(i)},\dots,\gamma _{n_{i}}^{(i)})\right\Vert
\geq C\lambda ^{(d-2\varepsilon )(n_{i}-N_{i})}.  \label{1.2}
\end{equation}

The path, $(\gamma _{N_{i}}^{(i)},\dots,\gamma _{n_{i}}^{(i)})$, can be
rewritten as $(\chi _{0}^{(i)},\sigma _{1}^{(i)},\chi
_{1}^{(i)},\dots,\sigma _{k_{i}}^{(i)})$, where for each $j\geq 1,\sigma
_{j}^{(i)}$ is a path in a distinct maximal loop class $L_{j}^{(i)},$ $\chi
_{j}^{(i)}$ is a minimal length path joining the last letter of $\sigma
_{j}^{(i)}$ (a characteristic vector in $L_{j}^{i}$) to the first letter of $%
\sigma _{j+1}^{(i)}$ (a characteristic vector in $L_{j+1}^{i}$), and $\chi
_{0}^{(i)}$ is a path from the first letter of $\gamma _{N_{i}}^{(i)}$ to
the first letter of $\sigma _{1}^{(i)}$.

The finite type property ensures that there are only finitely many maximal
loop classes and only finitely many characteristic vectors in each loop
class. Hence there can only be finitely many of these minimal joining paths $%
\chi _{j}^{(i)}$ over all $i,j$. Thus $\sup_{i,j}\Vert T(\chi
_{j}^{(i)})\Vert $ is bounded and $\sup_{i}\{\sup_{j}$length$(\chi
_{j}^{(i)})\}\leq \sup_{i}A^{(i)}<\infty $. Since it is not possible to
return to a maximal loop class after leaving it, the numbers $k_{i}$ are
bounded, say by $k$. Hence there is a constant $C$ such that 
\begin{equation}
\left\Vert T(\gamma _{N_{i}}^{(i)},\dots,\gamma _{n_{i}}^{(i)})\right\Vert
\leq \prod\limits_{j=0}^{k_{i}-1}\left\Vert T(\chi _{j}^{(i)})\right\Vert
\prod\limits_{j=1}^{k_{i}}\left\Vert T(\sigma_{j}^{(i)})\right\Vert \leq
C^{k}\prod\limits_{j=1}^{k_{i}}\left\Vert T(\sigma_{j}^{(i)})\right\Vert .
\label{1}
\end{equation}

Let $l_{j}^{(i)}$ denote the length of the path $\sigma _{j}^{(i)}$. Then 
\begin{equation}
\sum_{j=1}^{k_{i}}l_{j}^{(i)}\leq
n_{i}-N_{i}=\sum_{j=1}^{k_{i}}l_{j}^{(i)}+\sum_{j=0}^{k_{i-1}}\text{length}%
(\chi _{j}^{(i)})\leq \sum_{j=1}^{k_{i}}l_{j}^{(i)}+kA^{(i)},  \label{1.1}
\end{equation}%
so $\sum_{j=1}^{k_{i}}l_{j}^{(i)}\rightarrow \infty $ as $i\rightarrow
\infty $. Putting together these observations we see that for large enough $%
n_{i}-N_{i},$ \eqref{1.2} gives%
\begin{eqnarray*}
\frac{\log \left\Vert T(\gamma _{N_{i}}^{(i)},\dots,\gamma
_{n_{i}}^{(i)})\right\Vert }{n_{i}-N_{i}} &\geq &\frac{(d-2\varepsilon
)(n_{i}-N_{i})\log \lambda +\log C}{n_{i}-N_{i}} \\
&\geq &(d-2\varepsilon )\log \lambda -\frac{\varepsilon }{2}|\log \lambda |,
\end{eqnarray*}%
while (\ref{1}-\ref{1.1}) imply 
\begin{eqnarray*}
\frac{\log \left\Vert T(\gamma _{N_{i}}^{(i)},\dots,\gamma
_{n_{i}}^{(i)})\right\Vert }{n_{i}-N_{i}} &\leq &\frac{\log C^{k}+\log
\prod\limits_{j=1}^{k_{i}}\left\Vert T(\sigma _{j}^{(i)})\right\Vert }{%
\sum_{j=1}^{k_{i}}l_{j}^{(i)}} \\
&\leq &\frac{\sum_{j=1}^{k_{i}}\log \left\Vert T(\sigma
_{j}^{(i)})\right\Vert }{\sum_{j=1}^{k_{i}}l_{j}^{(i)}}+\frac{\varepsilon }{2%
}|\log \lambda |.
\end{eqnarray*}%
Hence%
\begin{equation}
\frac{\sum_{j=1}^{k_{i}}\log \left\Vert T(\sigma _{j}^{(i)})\right\Vert }{%
\sum_{j=1}^{k_{i}}l_{j}^{(i)}}\geq (d-\varepsilon )\log \lambda \text{ for
large }i\text{.}
\end{equation}%
and that implies that $\log \left\Vert T(\sigma _{j}^{(i)})\right\Vert \geq
(d-\varepsilon )l_{j}^{(i)}$ for some $j=j_{i}$. There is no loss of
generality in assuming $j_{i}=1$. Thus 
\begin{equation}
\left\Vert T(\sigma _{1}^{(i)})\right\Vert \geq \lambda
^{l_{1}^{(i)}(d-\varepsilon )}.  \label{1.4}
\end{equation}

We will now construct $x\in \supp\mu $ with $\underline{\dim }_{\loc}\mu
(x)\leq d-\varepsilon /2$ by constructing a symbolic representation from the
symbolic representations of a suitable subsequence of the $(x_{i})$. We will
rely on the fact that there will be a subsequence of (the symbolic
representations for) $x_{i}$ and index $j_{i}$ such that all $\sigma
_{j_{i}}^{(i)}$ belong to the same loop class and their lengths are
unbounded in $i$.

As there are only finitely many maximal loop classes, there must be some
subsequence such that all $\sigma_{1}^{(i)}$ (for $i$ in the subsequence)
belong to the same maximal loop class.

Suppose $\sup_{i}\ell _{1}^{(i)}<\infty $. As there are finitely many
characteristic vectors, there can be only finitely many paths of length at
most $\sup_{i}\ell _{1}^{(i)}$ and hence $\sup_{i}\left\Vert T(\sigma
_{1}^{(i)})\right\Vert <\infty $. Consider again inequality \eqref{1} with
this additional information 
\begin{equation*}
\left\Vert T(\gamma _{N_{i}}^{(i)},\dots,\gamma _{n_{i}}^{(i)})\right\Vert
\leq C^{k}\prod\limits_{j=1}^{k_{i}}\left\Vert T(\sigma
_{j}^{(i)})\right\Vert \leq C^{k+1}\prod\limits_{j=2}^{k_{i}}\left\Vert
T(\sigma _{j}^{(i)})\right\Vert .
\end{equation*}%
Since 
\begin{equation*}
\sum_{j=2}^{k_{i}}l_{j}^{(i)}\leq n_{i}-N_{i}=\ell
_{1}^{(i)}+\sum_{j=2}^{k_{i}}l_{j}^{(i)}+\sum_{j=0}^{k_{i-1}}\text{length}%
(\chi _{j}^{(i)})\leq \sum_{j=2}^{k_{i}}l_{j}^{(i)}+\ell _{1}^{(i)}+kA^{(i)}
\end{equation*}%
and $kA^{(i)}+\ell _{1}^{(i)}$ is bounded over $i$, the same reasoning as
used to deduce \eqref{1.4} shows that for some further subsequence and index 
$j_{i}\in \{2,\dots,k\},$ which we can assume without loss of generality is $%
2$, we have 
\begin{equation*}
\left\Vert T(\sigma _{2}^{(i)})\right\Vert \geq \lambda
^{l_{2}^{(i)}(d-\varepsilon )}
\end{equation*}%
with all $\sigma _{2}^{(i)}$ belonging to the same maximal loop class.

If $\sup_{i}\ell _{2}^{(i)}<\infty $, we repeat the argument. As there are
only finitely many maximal loop classes, we must eventually find a
subsequence of the indices $i$ and index $j$ such that the paths $\sigma
_{j}^{(i)}=\rho _{i},$ all are in the same maximal loop class $\Lambda $,
their lengths $\ell ^{(i)}=\ell _{j}^{(i)}\rightarrow \infty $ as $%
i\rightarrow \infty $ and 
\begin{equation*}
\left\Vert T(\rho _{i})\right\Vert \geq \lambda ^{\ell
_{j}^{(i)}(d-\varepsilon )}.
\end{equation*}

We will now `stitch' these paths together to obtain the $x\in \supp\mu $
required for the contradiction. For each pair of characteristic vectors, $%
\chi ,\psi ,$ in $\Lambda $, choose a path $\eta _{\chi ,\psi }$ (in $%
\Lambda $) with first letter $\chi $ and last letter $\psi $. Choose, also,
a path $\eta _{\psi }$ from $\gamma _{0}$ to each $\psi \in \Lambda $. Let $%
\mathcal{S}$ denote the finite set consisting of the chosen paths $\eta
_{\chi ,\psi }$, $\eta _{\psi }$. Since a transition matrix contains a
non-zero entry in each column, there is some constant $c_{0}>0$ such that $%
\left\Vert T(\eta ,\sigma )\right\Vert \geq c_{0}\left\Vert T(\sigma
)\right\Vert $ for all $\eta \in \mathcal{S}$ and all admissible paths $%
\sigma $ (meaning, $(\eta ,\sigma )$ is a path). Choose $i_{1}$ such that 
\begin{equation*}
\frac{\left\vert \log c_{0}\right\vert }{\ell ^{(i_{i})}}<\frac{\varepsilon
|\log \lambda |}{2}
\end{equation*}%
and select a path $\nu _{1}\in \mathcal{S}$ joining $\gamma _{0}$ to the
path $\rho _{i_{1}}$.

Next, as $\nu _{1},\rho _{i_{1}}$ are fixed and $\mathcal{S}$ is finite, we
can choose $c_{1}>0$ such that 
\begin{equation*}
\left\Vert T(\nu _{1},\rho _{i_{1}},\eta ,\sigma )\right\Vert \geq
c_{1}\left\Vert T(\sigma )\right\Vert
\end{equation*}%
for all admissible paths $\eta \in \mathcal{S}$ and $\sigma $. Then choose $%
i_{2}>i_{1}$ such that 
\begin{equation*}
\frac{\left\vert \log c_{1}\right\vert }{\ell ^{(i_{2})}}<\frac{\varepsilon
|\log \lambda |}{2}.
\end{equation*}%
As $\rho _{i_{1}}$ and $\rho _{i_{2}}$ belong to the same maximal loop class 
$\Lambda $, there is some path $\nu _{2}$ joining the last letter of $\rho
_{i_{1}}$ to the first letter of $\rho _{i_{2}}$. Having found such a path,
choose $c_{2}>0$ so 
\begin{equation}
\left\Vert T(\nu _{1},\rho _{i_{1}},\nu _{2},\rho _{i_{2}},\eta ,\sigma
)\right\Vert \geq c_{2}\left\Vert T(\sigma )\right\Vert  \label{1.5}
\end{equation}%
for all admissible paths $\eta \in \mathcal{S}$ and $\sigma $. Repeat this
procedure to construct $\nu _{j},\rho _{i_{j}},j=1,2,\dots$ and then let $x$
be the element of $\supp\mu $ with symbolic representation%
\begin{equation*}
\lbrack x]=(\nu _{1},\rho _{i_{1}},\nu _{2},\rho _{i_{2}},\dots).
\end{equation*}

It only remains to verify that $\underline{\dim }_{\loc}\mu (x)\leq
d-\varepsilon /2$. Towards this, let $\cM_{n}(x)=\mu (\Delta _{n}(x))+\mu
(\Delta _{n}^{R})+\mu (\Delta _{n}^{L})$. As was essentially observed in 
\cite[Theorem 2.6]{HHN}, 
\begin{equation*}
\underline{\dim }_{\loc}\mu (x)=\liminf_{n}\frac{\log \cM_{n}(x)}{n\log
\lambda }.
\end{equation*}%
If $n=\sum_{j=1}^{J}($length$(\nu _{j})+\ell ^{(i_{j})})$, then $\Delta
_{n}(x)=(\nu _{1},\rho _{i_{1}},\nu _{2},\rho _{i_{2}},\dots,\rho _{i_{J}})$%
. Thus \eqref{1.5} yields 
\begin{equation*}
\cM_{n}(x)\geq \mu (\Delta _{n}(x))\geq C\left\Vert T(\nu _{1},\rho
_{i_{1}},\nu _{2},\rho _{i_{2}},\dots,\rho _{i_{J}})\right\Vert \geq
Cc_{J-1}\left\Vert T(\rho _{i_{J}})\right\Vert ,
\end{equation*}%
and so%
\begin{equation*}
\frac{\log \cM_{n}(x)}{n\log \lambda }\leq \frac{\log C+\log c_{J-1}+\log
\left\Vert T(\rho _{i_{J}})\right\Vert }{n\log \lambda }.
\end{equation*}%
Recall that $\left\Vert T(\rho _{i})\right\Vert \geq \lambda ^{\ell
^{(i)}(d-\varepsilon )}$ , hence as $n\geq \ell ^{(i_{J})}$%
\begin{equation*}
\frac{\log \left\Vert T(\rho _{i_{J}})\right\Vert }{n\log \lambda }\leq
(d-\varepsilon ).
\end{equation*}%
Furthermore, the choice of $i_{J}$ ensures that 
\begin{equation*}
\left\vert \frac{\log c_{J-1}}{n\log \lambda }\right\vert \leq \left\vert 
\frac{\log c_{J-1}}{\ell ^{(i_{J})}\log \lambda }\right\vert <\frac{%
\varepsilon }{2}.
\end{equation*}%
Consequently, for large enough $n$ of this form, 
\begin{equation*}
\frac{\log \cM_{n}(x)}{n\log \lambda }\leq \frac{\varepsilon }{4}+\frac{%
\varepsilon }{2}+d-\varepsilon \leq d-\frac{\varepsilon }{2}.
\end{equation*}%
That proves $\underline{\dim }_{\loc}\mu (x)\leq d-\varepsilon /2$ as we
claimed, contradicting the initial assumption of the proof that $d=\inf_{x}\{%
\underline{\dim }_{\loc}\mu (x)\}.$
\end{proof}

\begin{remark}
Although we know from Example \ref{qLneLoc} that this result is not true for
all measures, it would be interesting to know if was true for all
self-similar measures.
\end{remark}

\section{$L^{p}$-improving results\label{LI}}

A measure $\mu $ on $[0,1]^{d}$ is said to be $L^{p}$-improving if there is
some $p>2$ so that $\mu \ast f\in L^{p}$ whenever $f\in L^{2}$. An
application of the open mapping theorem implies that in this case there is a
constant $C$ such that $\left\Vert \mu \ast f\right\Vert _{p}\leq
C\left\Vert f\right\Vert _{2}$ for all $f\in L^{2}$. The Hausdorff-Young
inequality shows that any measure $\mu $ whose Fourier transform $\widehat{%
\mu }\in \ell ^{q}$ for some $q<\infty $ is $L^{p}$-improving. The uniform
Cantor measures on Cantor sets with ratios of dissection bounded away from
zero are also $L^{p}$-improving (but their transforms need not tend to zero) 
\cite{Ch}. Conversely, a point mass measure is not $L^{p}$-improving since
it acts as an isometry on the $L^{p}$ spaces.

It is known that if $\mu $ is $L^{p}$-improving, then the Hausdorff and
energy dimensions of $\mu $ are positive \cite{HR}. It is natural to ask if
a similar statement can be made about the lower Assouad dimension of $\mu $.
In this section, we will show that while it is true that $\inf_{x}\{%
\underline{\dim }_{\loc}\mu (x)\}>0$ for an $L^{p}$-improving measure, it is
not necessary for $\dimqL\mu >0,$ or even for $\dimqL\supp\mu >0$.

\begin{proposition}
If $\mu :L^{2}([0,1]^{d})\rightarrow L^{p}([0,1]^{d})$ for $p>2$, then $%
\underline{\dim }_{\loc}\mu (x)\geq d(\frac{1}{2}-\frac{1}{p})$ for every $%
x\in \supp\mu $.
\end{proposition}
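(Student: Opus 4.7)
The plan is to test the $L^{p}$-improving convolution inequality against indicator functions of small balls and read off a quantitative upper bound on $\mu(B(x,r))$.

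Fix $x \in \supp \mu$ and $r > 0$, and set $f_r = \chi_{B(0,r)}$. Then $\|f_r\|_2 = c_d\, r^{d/2}$ for a dimensional constant $c_d$. By hypothesis there is a constant $C$ such that $\|\mu \ast f_r\|_p \leq C\|f_r\|_2 = Cc_d\, r^{d/2}$. For the lower bound, I would compute
\begin{equation*}
(\mu \ast f_r)(y) \;=\; \int \chi_{B(0,r)}(y - z)\, d\mu(z) \;=\; \mu(B(y,r)).
\end{equation*}
The key geometric observation is that if $y \in B(x, r/2)$ then $B(x, r/2) \subseteq B(y, r)$, so $(\mu \ast f_r)(y) \geq \mu(B(x, r/2))$ for every such $y$.

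Integrating this pointwise bound over $B(x, r/2)$ gives
\begin{equation*}
\|\mu \ast f_r\|_p^{\,p} \;\geq\; \int_{B(x,r/2)} \mu(B(x, r/2))^p\, dy \;=\; c'_d\, r^d\, \mu(B(x, r/2))^p,
\end{equation*}
hence $\|\mu \ast f_r\|_p \geq (c'_d)^{1/p}\, r^{d/p}\, \mu(B(x, r/2))$. Combining the two estimates yields
\begin{equation*}
\mu(B(x, r/2)) \;\leq\; C'\, r^{d(1/2 - 1/p)}
\end{equation*}
for a constant $C'$ independent of $x$ and $r$. Taking logarithms, dividing by $\log(r/2)$, and passing to $\liminf$ as $r \to 0^+$ gives $\underline{\dim}_{\loc} \mu(x) \geq d(\tfrac{1}{2} - \tfrac{1}{p})$, as required.

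I do not expect a real obstacle: the argument is a standard duality/test-function computation. The only care needed is the elementary inclusion $B(x,r/2) \subseteq B(y,r)$ for $y \in B(x,r/2)$, which provides the uniform lower bound on $\mu \ast f_r$ over a set of positive Lebesgue measure comparable to $r^d$. The constants are harmless because the conclusion is a statement about the asymptotic exponent as $r \to 0$, and the bound is uniform in $x \in \supp \mu$ so it applies at every support point.
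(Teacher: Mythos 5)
Your argument is correct and follows essentially the same approach as the paper: test the $L^{p}$-improving bound against the indicator of a small ball, use $\|f_r\|_2\sim r^{d/2}$, bound $\|\mu\ast f_r\|_p$ from below by $\mu(B(x,cr))$ times the $p$-th root of the Lebesgue measure of a comparable ball, and compare exponents. The only stylistic difference is that the paper argues by contradiction from a hypothetical small local dimension while you derive the uniform upper bound $\mu(B(x,r/2))\leq C'r^{d(1/2-1/p)}$ directly, which is cleaner but identical in substance.
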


\begin{proof}
Suppose this is not true, say $\underline{\dim }_{\loc}\mu (x)=\varepsilon $
for some $\varepsilon <d(1/2-1/p)$. Then for any $\delta >0$ there are $%
r_{n}\rightarrow 0$ such that $\mu (B(x,r_{n}))\geq r_{n}^{\varepsilon
+\delta }$. Let $f_{n}=1_{B(x,2r_{n})}$, so $\left\Vert f_{n}\right\Vert
_{2}\sim \sqrt{r_{n}^{d}}$. Note that if $z\in B(0,r_{n})$ and $t\in
B(x,r_{n}),$ then $z-t\in B(x,2r_{n})$ so $\mu \ast f_{n}(z)\geq \mu
(B(x,r_{n}))$. Hence for some constants $C_{1},C_{2},C_{3}$ (independent of $%
n)$ and all $r_{n}$, 
\begin{equation*}
C_{1}r_{n}^{d/2}\geq C_{2}\left\Vert f_{n}\right\Vert _{2}\geq \left\Vert
\mu \ast f_{n}\right\Vert _{p}\geq \mu (B(x,r_{n}))m(B(0,r_{n}))^{1/p}\geq
C_{3}r_{n}^{\varepsilon +\delta }r_{n}^{d/p}\text{.}
\end{equation*}%
But this is impossible as $\varepsilon +d/p+\delta <d/2$ for small $\delta
>0 $.
\end{proof}

We will give two examples to see this does not extend to the quasi-lower
Assouad dimension.

\begin{example}
A set $E\subseteq \lbrack 0,1]$ with $\dimqL E=0$ and a measure $\mu $
supported on $E$ that is $L^{p}$-improving: In \cite[Theorem 2]{Sa}, Salem
proved that the Fourier transform of the uniform Cantor measure supported on
suitable random Cantor sets is almost surely in $\ell ^{p}$ for some $%
p<\infty $. Such a measure is $L^{p}$-improving. We will show that we can
construct a suitable Cantor set so that its quasi-lower Assouad dimension is
zero.

We will follow the notation of Salem's paper. To begin, choose a rapidly
growing sequence $\{n_{j}\}$ and put $m_{j}\sim n_{j}\log 3/\log \log n_{j}$%
. For $k\in \Lambda _{j}=\{n_{j},\dots,n_{j}+m_{j}\},$ $j=1,2,\dots$, put $%
a_{k}=1/\log k$, $b_{k}=2/\log k,$ and otherwise put $a_{k}=\frac{1}{3}+%
\frac{1}{\log k}$, $b_{k}=\frac{1}{3}+\frac{2}{\log k}$. The sequence $%
\{n_{j}\}$ should be sufficiently sparse that $n_{j+1}\gg n_{j}+m_{j}$ and $%
\prod\limits_{k=1}^{n_{j}}a_{k}\geq 3^{-n_{j}}$. Now construct a random
Cantor set with ratio of dissection at step $k$ equal to $\xi _{k}$ where $%
\xi _{k}$ is chosen uniformly over the interval $[a_{k},b_{k}]$. We have $%
b_{k}-a_{k}=1/\log k$ and $(\log \log k)/k\rightarrow 0$. Furthermore, it is
easy to see that $\liminf (a_{1}\cdot \cdot \cdot a_{n})^{1/n}>0$.
Consequently, it follows from \cite{Sa} that if $\mu $ is the associated
(random) uniform Cantor measure, then almost surely $\widehat{\mu }\in \ell
^{p}$ for some $p<\infty $.

It only remains to check that for all such random Cantor sets $E,$ we have $%
\dimqL E=0$. This is also easy to verify. Just take $R$ to be the length of
the Cantor intervals at step $n_{j}$ in the construction, $r$ the length of
the Cantor intervals at step $n_{j}+m_{j}$ and $x$ to be an endpoint of a
step $n_{j}$ interval. Then there is a $\delta >0$ such that $r\leq
R^{1+\delta }$. As well, $N_{r}(B(x,R))=2^{m_{j}}$, while $R/r\geq (\log
n_{j})^{m_{j}}$.
\end{example}

\begin{example}
\label{LpI} An $L^{p}$-improving measure $\mu $ with $\dimqL\mu =0$ and $%
\dimL\supp\mu >0$: By modifying Salem's construction in \cite[Theorem 2]{Sa}
we can also give an example of an $L^{p}$-improving measure of quasi-lower
Assouad dimension zero, whose support has positive lower dimension.

We will put $a_{k}=1/4$, $b_{k}=1/4+1/\log k$ and construct the random
Cantor sets with ratio of dissection $\xi _{k}$ at step $k,$ as before.
Certainly all such sets will have positive lower Assouad dimension. The
random measure $\mu _{\omega }$ will be the weak$^{\ast }$ limit of the
measures 
\begin{equation*}
\mu _{\omega }^{(N)}=\prod\limits_{k=1}^{N}\left( p_{k}\delta
_{0}+(1-p_{k})\delta _{\xi _{1}\cdot \cdot \cdot \xi _{k-1}(1-\xi
_{k})}\right) ,
\end{equation*}%
where $p_{k}=1/j$ if $k=n_{j}+1,\dots,2n_{j}$ and $p_{k}=1/2$ otherwise.
Again, $\{n_{j}\}$ will be a very rapidly growing sequence with $n_{j+1}\gg
2n_{j}$. Note that 
\begin{equation*}
\left\vert \widehat{\mu _{\omega }^{(N)}}(n)\right\vert \leq \prod\limits 
_{\substack{ k=1  \\ k\notin \{n_{j}+1,\dots,2n_{j}\}}}^{N}\left\vert \cos
(\pi n\xi _{1}\cdot \cdot \cdot \xi _{k-1}(1-\xi _{k})\right\vert .
\end{equation*}%
Let $\varepsilon _{s}=\frac{1}{\pi }\int_{0}^{\pi }\left\vert \cos
x\right\vert ^{s}dx$ and temporarily fix integer $n$. Take $N=N(n)=\lfloor
\log \left\vert n\right\vert /\log 3\rfloor $ so 
\begin{equation*}
\left\vert n\right\vert a_{1}\cdot \cdot \cdot a_{N-1}/\log N=\left\vert
n\right\vert 3^{N-1}\geq n^{2}.
\end{equation*}%
By the same reasoning as in Salem's argument, 
\begin{equation*}
\int_{0}^{1}\left\vert \widehat{\mu _{\omega }^{(N)}}(n)\right\vert
^{s}d\omega \leq \prod_{\substack{ k-1  \\ k\notin \{n_{j}+1,\dots,2n_{j}\}}}%
^{N}\left( 1+\frac{1}{k^{2}}\right) \varepsilon _{s}^{N-M_{N}}
\end{equation*}%
where $M_{N}$ is the number of indices from the sets $\{n_{j}+1,\dots,2n_{j}%
\}$ that are at most $N$. If $\{n_{j}\}$ is sufficiently sparse and $N\in
(n_{J},n_{J+1}]$, then one can easily check that $N-M_{N}\geq N/6,$ thus%
\begin{equation*}
\int_{0}^{1}\left\vert \widehat{\mu _{\omega }^{(N)}}(n)\right\vert
^{s}d\omega \leq C\varepsilon _{s}^{\log \left\vert n\right\vert /(6\log 3)}
\end{equation*}%
for a universal constant $C$. Since $\varepsilon _{s}\rightarrow 0,$ we can
choose it so small that $\varepsilon _{s}^{\log \left\vert n\right\vert
/(6\log 3)}\leq n^{-2}$. For this choice of $s$, 
\begin{equation*}
\sum_{n=-\infty }^{\infty }\int_{0}^{1}\left\vert \widehat{\mu _{\omega }}%
(n)\right\vert ^{s}d\omega \leq \sum_{n=-\infty }^{\infty
}\int_{0}^{1}\left\vert \widehat{\mu _{\omega }^{(N)}}(n)\right\vert
^{s}d\omega \leq C\sum_{n=-\infty }^{\infty }n^{-2}<\infty \text{.}
\end{equation*}%
Consequently, the series $\sum_{n=-\infty }^{\infty }\int_{0}^{1}\left\vert 
\widehat{\mu _{\omega }}(n)\right\vert ^{s}d\omega $ converges and hence $%
\widehat{\mu _{\omega }}\in \ell ^{s}$ for a.e.\ $\omega $. Any such $\mu
_{\omega }$ is $L^{p}$-improving.

To see that $\dimqL\mu =0,$ consider $R$ the length of a Cantor interval of
step \thinspace $n_{j}+1$, $x$ its right hand endpoint and $r$ the length of
a Cantor interval of step $2n_{j}$. Then $R/r\geq 4^{n_{j}}$ while 
\begin{equation*}
\frac{\mu (B(x,R))}{\mu (B(x,r))}=\left( 1-\frac{1}{j}\right) ^{-n_{j}},
\end{equation*}%
from which it follows that $\dimqL\mu =0$. Being $L^{p}$-improving, $%
\inf_{x}\{\underline{\dim }_{\loc}\mu (x)\}>0$ and hence is not equal to $%
\dimqL\mu .$
\end{example}

\begin{remark}
The fact that there are measures $\mu $ with zero quasi-lower Assouad
dimension, but $\widehat{\mu }\in \ell ^{p}$ for some $p<\infty $ is
surprising in light of the general principle that one cannot have a measure
small in both its time and frequency domains.
\end{remark}


\section{The Assouad spectrum and quasi-Assouad dimensions of measures \label%
{sp}}

In \cite{FY1} and \cite{FY2}, Fraser and Yu introduced the notion of the
Assouad spectrum of a bounded set $E\subseteq \mathbb{R}^{d}$. These are the
functions 
\begin{equation*}
\theta \mapsto \dimAscottheta E=\inf \left\{ s:(\exists c)(\forall 0<R\leq 1)%
\text{ }\sup_{x\in E}N_{R^{1/\theta }}(B(x,R)\cap E)\leq c\left(
R^{1-1/\theta }\right) ^{s}\right\}
\end{equation*}%
and 
\begin{equation*}
\theta \mapsto\dimLscottheta E=\sup \left\{ s:(\exists c)(\forall 0<R\leq 1)%
\text{ }\sup_{x\in E}N_{R^{1/\theta }}(B(x,R)\cap E)\geq c\left(
R^{1-1/\theta }\right) ^{s}\right\}
\end{equation*}
for $\theta \in (0,1)$, which differ from the previously considered Assouad
dimensions by fixing the relationship of $r$ and $R$. In this section, we
study the corresponding notion for measures on fairly general metric spaces.

\begin{definition}
The \textbf{upper} and \textbf{lower Assouad spectrum} of the measure $\mu $
are the functions defined on $(0,1)$ by 
\begin{equation*}
\theta \mapsto\dimAscottheta\mu =\inf \left\{ s:(\exists c)\text{ }(\forall
0<R\leq 1)\sup_{x\in \supp\mu }\text{ }\frac{\mu (B(x,R))}{\mu
(B(x,R^{1/\theta }))}\leq c\left( R^{1-1/\theta }\right) ^{s}\right\} .
\end{equation*}
and 
\begin{equation*}
\theta \mapsto \dimLscottheta\mu =\sup \left\{ s:(\exists c)\text{ }(\forall
0<R\leq 1)\text{ }\inf_{x\in \supp\mu }\text{ }\frac{\mu (B(x,R))}{\mu
(B(x,R^{1/\theta }))}\geq c\left( R^{1-1/\theta }\right) ^{s}\right\} .
\end{equation*}
\end{definition}

This fixes the relationship of $r$ and $R$ as $r=R^{1/\theta}$. Another way
to define the spectrum is by only requiring an upper bound, i.e.\ we set $%
r\leq R^{1/\theta}$. These \textquotedblleft less than or
equal\textquotedblright\ spectra will be denoted by $\dimAcantheta$ and $%
\dimLcantheta$. Note that we have already defined this notion when
introducing the quasi-Assouad dimension and $\dimAcantheta \mu=\overline{H}%
(1/\theta-1)$ and $\dimLcantheta \mu=\underline{H}(1/\theta-1)$. Clearly,
for $\psi \leq \theta $ we have 
\begin{align*}
\dimqA\mu & \geq \dimAcantheta\mu \geq \dimAscotpsi\mu , \\
\dimqL\mu & \leq \dimLcantheta\mu\leq \dimLscotpsi\mu .
\end{align*}

In \cite{FHHTY} it was shown that $\overline{h}(1/\theta -1)=\sup_{0<\psi
\leq \theta }\dimAscotpsi E$ for subsets of $\mathbb{R}^{d},$ although the
same proof holds for any doubling metric space $E,$ i.e. spaces where $\dimA %
E<\infty $. Consequently, 
\begin{equation*}
\lim_{\theta \rightarrow 1}\dimAscottheta E =\limsup_{\theta \rightarrow 1}%
\dimAscottheta E=\dimqA E.
\end{equation*}%
The corresponding result was later proved for the quasi-lower Assouad
dimension in \cite{CWC} (with the additional assumption that the space $E$
was uniformly perfect). It is straightforward to obtain the analogous result
for doubling measures, that is measures $\mu$ for which $\dimA\mu <\infty $.
But this is a stringent condition for measures. However, it is possible to
obtain the same conclusion for measures which only satisfy the weaker
(quasi-doubling) condition, $\dimqA\mu <\infty $ and this we do in Theorem %
\ref{thm:spectrum} below. The general scheme of the proof is essentially the
same as in \cite{FHHTY}, but new technical complications arise. Examples of
such measures include equicontractive, self-similar measures that are
regular, meaning the probabilities associated with the right and left-most
similarities are equal and minimal. These measures are typically not
doubling if they fail the open set condition. For a proof that such measures
are quasi-doubling and specific examples of quasi-doubling, but not
doubling, measures, we refer the reader to \cite{HHT}.

\begin{theorem}
\label{thm:spectrum} Suppose $\mu $ is a probability measure and $\dimqA\mu
<\infty $. Let $\theta \in (0,1)$.

(i) Then 
\begin{equation*}
\dimAcantheta\mu =\sup_{0<\psi \leq \theta }\dimAscotpsi\mu \quad \text{ and 
}\quad \dimLcantheta\mu =\inf_{0<\psi \leq \theta }\dimLscotpsi\mu .
\end{equation*}

(ii) Moreover, $\lim_{\theta \rightarrow 1}\dimLscottheta\mu =\dimqL\mu $
and $\lim_{\theta \rightarrow 1}\dimAscottheta\mu =\dimqA\mu $.
\end{theorem}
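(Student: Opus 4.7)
I would prove (i) first and derive (ii) as a corollary. For the upper spectra, the inequality $\dimAcantheta\mu\geq\sup_{0<\psi\leq\theta}\dimAscotpsi\mu$ is immediate from the definitions: for each $\psi\leq\theta$ the pair $(R,R^{1/\psi})$ is admissible in the $\leq\theta$ condition, hence any $s$ valid there is valid for the $=\psi$ spectrum. The reverse inequality is the substantive step. Given $s>\sup_{0<\psi\leq\theta}\dimAscotpsi\mu$ and arbitrary $x\in\supp\mu$ and $0<r\leq R^{1/\theta}$, I would express $r=R^{1/\psi_{0}}$ with $\psi_{0}=\log R/\log r\in(0,\theta]$, so that the $=\psi_{0}$ spectrum supplies a bound
\begin{equation*}
\frac{\mu(B(x,R))}{\mu(B(x,r))}\leq c_{\psi_{0}}\left(\frac{R}{r}\right)^{s}
\end{equation*}
with a constant that a priori depends on $\psi_{0}$. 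The crucial step is to make $c_{\psi_{0}}$ uniform in $\psi_{0}\in(0,\theta]$. I would do this by inserting an intermediate scale $r_{\ast}=R^{1/\psi_{\ast}}$ for a fixed $\psi_{\ast}\in(0,\theta]$ and splitting
\begin{equation*}
\frac{\mu(B(x,R))}{\mu(B(x,r))}=\frac{\mu(B(x,R))}{\mu(B(x,r_{\ast}))}\cdot\frac{\mu(B(x,r_{\ast}))}{\mu(B(x,r))}.
\end{equation*}
The first factor is then controlled by the $=\psi_{\ast}$ spectrum with a single universal constant. The second factor is controlled by the quasi-doubling hypothesis $\dimqA\mu<\infty$, which, via the $R^{\varepsilon}$-quasi-doubling consequence noted in the remark following the proposition above, gives $\mu(B(x,r_{\ast}))/\mu(B(x,r))\leq c(r_{\ast}/r)^{\dimqA\mu+\varepsilon}$ provided $r$ is sufficiently small relative to $r_{\ast}$. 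Balancing exponents, and iterating the splitting when $r\ll r_{\ast}$, absorbs the intermediate factor into an arbitrarily small exponent perturbation.

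For (ii), combine (i) with the identification $\dimqA\mu=\lim_{\theta\to 1^{-}}\dimAcantheta\mu$. The inequality $\limsup_{\theta\to 1^{-}}\dimAscottheta\mu\leq\dimqA\mu$ is immediate from $\dimAscottheta\mu\leq\dimAcantheta\mu$. For the matching lower bound, fix $\varepsilon>0$; by (i) there exist $\theta_{0}<1$ and $\psi_{0}\leq\theta_{0}$ with $\overline{\dim}_{A}^{\,=\psi_{0}}\mu\geq\dimqA\mu-\varepsilon$. The same splitting-plus-quasi-doubling argument now transfers this lower bound from $\psi=\psi_{0}$ up to $\psi=\theta$ for every $\theta\in[\psi_{0},1)$, with additive error $O(\varepsilon)$, giving $\liminf_{\theta\to 1^{-}}\dimAscottheta\mu\geq\dimqA\mu-O(\varepsilon)$.

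The lower-spectrum statements are handled by formally dual arguments, exchanging sup with inf and reversing the relevant inequalities; the quasi-doubling hypothesis $\dimqA\mu<\infty$ again supplies the two-sided control on auxiliary ratios required to close the chain. The principal obstacle throughout is the non-uniformity of the constants across the $=\psi$ family: without a bridging estimate between nearby scales, the splitting cannot be closed with a universal constant. This is exactly what $\dimqA\mu<\infty$ provides, in contrast to the set case \cite{FHHTY,CWC} where monotonicity of covering numbers furnishes the corresponding uniformity for free, and it is the reason the hypothesis appears in both halves of the theorem.
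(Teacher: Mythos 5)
Your proposal correctly identifies the central difficulty (uniformity of constants across the family of $=\psi$ spectra) and the correct tool (the quasi-doubling consequence of $\dimqA\mu<\infty$), but the mechanism you propose for achieving uniformity does not close, and this is the substantive gap.

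Concretely: you insert a single fixed intermediate scale $r_*=R^{1/\psi_*}$ and try to control $\mu(B(x,r_*))/\mu(B(x,r))$ by quasi-doubling. The quasi-doubling bound gives an exponent on the order of $\dimqA\mu+\varepsilon$ on the ratio $(r_*/r)$, and since $\dimqA\mu$ is in general much larger than the target $s$, this only helps if $r_*/r$ is small, i.e.\ if $\psi_0$ is close to $\psi_*$. For $\psi_0$ near $0$ (the genuinely hard case) this fails. Your remedy of iterating the splitting with the scales $R^{1/\psi_*^k}$ produces the discrete grid $\{\psi_*^k\}$, and between consecutive grid points $\psi_*^k$ and $\psi_*^{k+1}$ the \emph{multiplicative} gap is a constant $\psi_*$, not something tending to zero. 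The last bridging step from $R^{1/\psi_*^k}$ to $r$ thus incurs an error of roughly $(1/\psi_*^k)|1-1/\psi_*|\cdot|\log R|$ in the log, which blows up as $k\to\infty$. No fixed $\psi_*$, nor any finite collection of powers of a single $\psi_*$, can approximate an arbitrary small $\psi_0$ with vanishing multiplicative error, so the ``arbitrarily small exponent perturbation'' you assert is not obtained.

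The paper's proof resolves exactly this point with a Dirichlet-type density step (Lemma~\ref{thm:numTheo}): it picks a second exponent $\beta\in(0,\theta)$ with $\log\theta/\log\beta$ irrational, so that $\{\theta^n\beta^m:n,m\in\mathbb{N}\}$ accumulates at $0$ with gaps $\to 0$ on the logarithmic scale. One then telescopes the $=\theta$ spectrum $n$ times and the $=\beta$ spectrum $m$ times to reach $\theta^n\beta^m$, chosen so close to $\theta_i$ that the final quasi-doubling bridge contributes only an $O(\eta)$ perturbation, where $\eta$ is free. Your proposal is missing this two-exponent density idea, and it is the heart of the argument. In addition, the paper separately handles the cases $\theta_i\to\psi>0$ (split further into increasing/decreasing subsequences, with a distinct quasi-doubling argument via the auxiliary function $g(n)$) and $\psi=0$; your sketch does not engage with this case analysis. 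For part (ii), the monotonicity you invoke goes the wrong way (a lower bound on $\mu(B(x,R))/\mu(B(x,R^{1/\psi_0}))$ does not transfer to a lower bound on $\mu(B(x,R))/\mu(B(x,R^{1/\theta}))$ for $\theta>\psi_0$ by containment, since the balls shrink the wrong way); the paper instead first proves continuity of $\theta\mapsto\dimAscottheta\mu$ and $\theta\mapsto\dimLscottheta\mu$ and combines this with a telescoping identity relating $\dimAscottheta$ to $\overline{\dim}_A^{\,=\theta^{1/n}}$, which is again a genuinely different mechanism from what you describe.
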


We remark that, in particular, the quasi-Assouad dimensions of a doubling
measure can be recovered from the limiting behaviour of the Assouad spectrum.

The proof will proceed as follows: We first prove an elementary technical
result, followed by the proof of part (i) of the theorem. We will then show
that for quasi-doubling measures, the maps $\theta \mapsto\dimLscottheta$ or 
$\dimAscottheta$ are continuous for all $\theta \in (0,1)$. Lastly, this
fact will be used in proving part (ii) of the theorem.

\begin{lemma}
\label{thm:numTheo} Let $0<\beta <\theta <1$ and assume $\log \theta /\log
\beta \notin \mathbb{Q}$. Let 
\begin{equation*}
L=\{m\log \beta +n\log \theta :m,n\in \mathbb{N\}=\{}y_{j}\}_{j=1}^{\infty }
\end{equation*}%
where $y_{j}$ is ordered decreasingly (to $-\infty $). Then $%
\lim_{j\rightarrow \infty }y_{j}-y_{j+1}=0$. Furthermore, assume $(\theta
_{i})$ is a sequence tending to $0$ with $\theta _{i}>0$. Then, given small $%
\eta >0,$ there is an index $i_{0}$ such that for all $i\geq i_{0}$ there
exist positive integers $m,n$ such that 
\begin{equation}
\frac{\eta }{2\theta _{i}}\leq \frac{1}{\theta _{i}}-\frac{1}{\theta
^{n}\beta ^{m}}\leq \frac{4\eta }{\theta _{i}}.  \label{*}
\end{equation}
\end{lemma}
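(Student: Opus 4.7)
The plan is to reduce both parts to a single classical density question. Set $a=-\log\beta>0$ and $b=-\log\theta>0$, so $a/b$ is irrational. Both assertions will follow from the quantitative density statement:

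\emph{For every $\epsilon>0$ there exists $Z_\epsilon>0$ such that for every $z\geq Z_\epsilon$ one can find positive integers $m,n\geq 1$ with $|ma+nb-z|<\epsilon$.}

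I would prove this density statement by Kronecker's theorem (or Weyl equidistribution): the sequence $\{ma\bmod b:m\geq 1\}$ is dense in $[0,b)$. Fix a finite collection of indices $m_1,\dots,m_J\geq 1$ whose residues $r_j:=m_ja\bmod b$ form an $\epsilon$-net in $[0,b)$, and write $m_ja=k_jb+r_j$; set $K=\max_j k_j$. For $z\geq(K+1)b$, write $z=qb+r$ with $r\in[0,b)$, choose $j$ with $|r_j-r|<\epsilon$, and put $n=q-k_j\geq 1$. Then $m_ja+nb=qb+r_j$ lies within $\epsilon$ of $z$. The density statement immediately yields part~(i): if $y_j-y_{j+1}>2\epsilon$ for arbitrarily large $j$, the midpoints of these gaps would be at distance more than $\epsilon$ from every element of $L$, contradicting the density at arbitrarily large scale.

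For part (ii), set $u_i=-\log\theta_i$ and multiply \eqref{*} by $\theta_i$, obtaining $\eta/2\leq 1-\theta_i/(\theta^n\beta^m)\leq 4\eta$; rearranging and taking logarithms shows the inequality is equivalent to producing positive integers $m,n$ with
\[
u_i+\log(1-4\eta)\;\leq\;ma+nb\;\leq\;u_i+\log(1-\eta/2).
\]
This is an interval around $u_i$ of positive length $\log\bigl((1-\eta/2)/(1-4\eta)\bigr)\sim 7\eta/2$ for small $\eta$. Since $u_i\to\infty$, part (i) guarantees that once $i$ is large enough for the gaps in the increasing enumeration of $\{ma+nb:m,n\geq 1\}$ near $u_i$ to be strictly smaller than this length, the interval must contain some $ma+nb$, which is the required conclusion. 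The main obstacle I anticipate is the positivity constraint $m,n\geq 1$: Kronecker's theorem a priori gives density only for $m,n\in\mathbb{Z}$, so one must work with a fixed finite pool of $m_j$'s (hence a bounded $K$) and absorb all the flexibility into $n=q-k_j$, which is why we must pass to $z$ (respectively $u_i$) large, and hence why the conclusion of part (ii) holds only for $i\geq i_0$.
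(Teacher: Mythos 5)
Your proposal is correct, and the core mechanism is the same as the paper's: both arguments reduce \eqref{*} to placing a point of $\{ma+nb:m,n\geq 1\}$ (with $a=-\log\beta$, $b=-\log\theta$) in an interval of length $O(\eta)$ near $\log(1/\theta_i)$, then invoke the vanishing-gap property of this set. What you do differently is (a) make the reduction exact rather than one-sided, by multiplying \eqref{*} by $\theta_i$ and taking logarithms to see that it is \emph{equivalent} to $ma+nb\in[u_i+\log(1-4\eta),\,u_i+\log(1-\eta/2)]$ with $u_i=\log(1/\theta_i)$; the paper instead works multiplicatively, forcing $\beta^m\theta^n/\theta_i\in(e^{\eta},e^{2\eta})$, which is a strict sub-interval of the true target, and then checks the four required inequalities one by one using elementary estimates valid only for small $\eta$. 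Your reformulation is cleaner because the interval endpoints appear for free and the estimate on the length ($\sim 7\eta/2$) is transparent. And (b) you supply a self-contained proof of the gap statement (part (i)) via Kronecker's theorem, carefully handling the positivity constraint $m,n\geq 1$ by fixing a finite pool of residues $m_ja\bmod b$ and absorbing all flexibility into $n$; the paper simply declares this part ``well known.'' One small thing worth flagging for your write-up: when you invoke ``gaps near $u_i$'' you should note explicitly that the left endpoint $u_i+\log(1-4\eta)$ tends to $+\infty$, so one can choose $i_0$ making it exceed some fixed $z_J$ beyond which all consecutive gaps are below the interval length $\log\bigl((1-\eta/2)/(1-4\eta)\bigr)$; this is exactly the role the paper's choice of $J$ and $i_0$ plays.
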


\begin{proof}
The fact that $y_{j}-y_{j+1}\rightarrow 0$ is well known, so we will only
prove the second statement.

Fix small $0<\eta <1$ and choose $J$ such that $|y_{j}-y_{j+1}|<\eta $ for
all $j\geq J$. Choose $i_{0}$ such that $\log 1/\theta _{i}\geq |y_{J}|+1$
for all $i\geq i_{0}$. Temporarily fix such an $i$ and choose the maximal
index $k$ such that $\log 1/\theta _{i}>|y_{k}|+\eta $ . Note that $k\geq J$%
. As $k$ is maximal, it must be that either $|y_{k+1}|\geq \log 1/\theta
_{i} $ or $0<\log 1/\theta _{i}-|y_{k+1}|<\eta $. In either case, the fact
that $|y_{k}-y_{k+1}|<\eta $ ensures that 
\begin{equation*}
\eta <\log 1/\theta _{i}-|y_{k}|<2\eta .
\end{equation*}%
It is now a routine calculation to see that if $y_{k}=m\log \beta +n\log
\theta $, then $e^{\eta }<\beta ^{m}\theta ^{n}/\theta _{i}<e^{2\eta }$.
Thus for all $i\geq i_{0},$ we have%
\begin{equation*}
\frac{\eta }{2\theta _{i}}\leq \frac{\eta e^{-2\eta }}{\theta _{i}}\leq 
\frac{\eta }{\beta ^{m}\theta ^{n}}<\frac{1}{\theta _{i}}-\frac{1}{\beta
^{m}\theta ^{n}}<\frac{4\eta }{\beta ^{m}\theta ^{n}}\leq \frac{4\eta }{%
\theta _{i}}.\qedhere
\end{equation*}
\end{proof}

\begin{proof}[Proof of Theorem~\protect\ref{thm:spectrum}]
(i).$\;\;$ First, consider the upper Assouad spectrum. There is no loss in
assuming $s=\dimAcantheta\mu >0$ for otherwise $\dimAscotpsi\mu =0$ for all $%
\psi \leq \theta $ as well. Fix $0<\varepsilon <s$ and obtain $x_{i}\in \supp%
\mu ,R_{i}\rightarrow 0$ and $r_{i}=R_{i}^{1/\theta _{i}}\leq R_{i}$, with $%
\theta _{i}\leq \theta $ and%
\begin{equation*}
\frac{\mu (B(x_{i},R_{i}))}{\mu (B(x_{i},R_{i}^{1/\theta _{i}}))}\geq \left( 
\frac{R_{i}}{R_{i}^{1/\theta _{i}}}\right) ^{s-\varepsilon }.
\end{equation*}%
Without loss of generality, we can assume $\theta _{i}\rightarrow \psi $
where $\psi \in \lbrack 0,\theta ]$ and that the convergence is monotonic.

Case 1: We will first assume $\psi >0$. If $(\theta _{i})$ is a decreasing
sequence (so $R_{i}^{1/\theta _{i}}\geq R_{i}^{1/\psi }$), then we have%
\begin{equation*}
\frac{\mu (B(x_{i},R_{i}))}{\mu (B(x_{i},R_{i}^{1/\psi }))}\geq \frac{\mu
(B(x_{i},R_{i}))}{\mu (B(x_{i},R_{i}^{1/\theta _{i}}))}\geq \left( \frac{%
R_{i}}{R_{i}^{1/\theta _{i}}}\right) ^{s-\varepsilon }.
\end{equation*}%
As $1-1/\theta _{i}\rightarrow 1-1/\psi ,$ it follows that 
\begin{equation*}
\frac{\mu (B(x_{i},R_{i}))}{\mu (B(x_{i},R_{i}^{1/\psi }))}\geq
R_{i}^{(1-1/\psi )\left( \frac{1-\theta _{i}}{1-\psi }\right) (s-\varepsilon
)}\geq R_{i}^{(1-1/\psi )(s-\varepsilon /2)}
\end{equation*}%
if $i$ is sufficiently large. That implies $\dimAscotpsi\mu \geq
s-\varepsilon /2$ and as $\varepsilon >0$ was arbitrary we deduce that $%
\dimAscotpsi\mu \geq s$. Thus $\sup_{0<\psi \leq \theta }\dimAscotpsi\mu =s.$

Otherwise, we can assume $(\theta _{i})$ increases to $\psi \leq \theta <1$.
Choose $n_{i}\in \mathbb{N}$ so that 
\begin{equation*}
2^{-(n_{i}+1)}<R_{i}^{1/\psi }\leq 2^{-n_{i}}
\end{equation*}%
(where we choose a subsequence of $\{R_{i}\},$ if necessary, to ensure the
sequence $\{n_{i}\}$ is strictly increasing) and define a function $g$ on $%
\mathbb{N}$ by $g(n)=R_{i}^{1/\theta _{i}}2^{n_{i}}$ if $n_{i}\leq n<n_{i+1}$%
. Then $\log R_{i}\sim n_{i}$ and%
\begin{equation*}
\log 1/2+\left( \frac{1}{\theta _{i}}-\frac{1}{\psi }\right) \log R_{i}\leq
\log g(n_{i})\leq \left( \frac{1}{\theta _{i}}-\frac{1}{\psi }\right) \log
R_{i}.
\end{equation*}
Hence if $n_{i}\leq n<n_{i+1},$ 
\begin{equation*}
\frac{\left\vert \log g(n)\right\vert }{n}\leq \frac{\left\vert \log
g(n_{i})\right\vert }{n_{i}}\rightarrow 0\text{ as }n\rightarrow \infty 
\text{ (equivalently, }i\rightarrow \infty ).
\end{equation*}

As proven in \cite[Proposition 4.2]{HHT}, the assumption that $\dimqA\mu
<\infty $ ensures that for each $q>1$ there is a constant $c$ such that for
all $i$, 
\begin{equation*}
\mu (B(x_{i},R_{i}^{1/\theta _{i}}))=\mu (B(x_{i},g(n_{i})2^{-n_{i}}))\geq
cq^{-n_{i}}\mu (B(x_{i},2^{-n_{i}}))\geq cq^{-n_{i}}\mu
(B(x_{i},R_{i}^{1/\psi })).
\end{equation*}%
Thus 
\begin{eqnarray*}
\left( \frac{R_{i}}{R_{i}^{1/\theta _{i}}}\right) ^{s-\varepsilon } &\leq &%
\frac{\mu (B(x_{i},R_{i}))}{\mu (B(x_{i},R_{i}^{1/\theta _{i}}))}=\frac{\mu
(B(x_{i},R_{i}))}{\mu (B(x_{i},R_{i}^{1/\psi }))}\frac{\mu
(B(x_{i},R_{i}^{1/\psi }))}{\mu (B(x_{i},R_{i}^{1/\theta _{i}}))} \\
&\leq &\frac{q^{n_{i}}}{c}\frac{\mu (B(x_{i},R_{i}))}{\mu
(B(x_{i},R_{i}^{1/\psi }))}.
\end{eqnarray*}%
As 
\begin{equation*}
\left( R_{i}^{1/\psi }\right) ^{\log q/\log 2}\leq \left( 2^{-n_{i}}\right)
^{\log q/\log 2}=q^{-n_{i}},
\end{equation*}%
that shows%
\begin{equation*}
\frac{\mu (B(x_{i},R_{i}))}{\mu (B(x_{i},R_{i}^{1/\psi }))}\geq
cR_{i}^{(1-1/\theta _{i})(s-\varepsilon )}q^{-n_{i}}\geq cR_{i}^{(1-1/\theta
_{i})(s-\varepsilon )}R_{i}^{\log q/(\psi \log 2)}=cR_{i}^{(1-1/\psi )t_{i}}
\end{equation*}%
where 
\begin{equation*}
t_{i}=\left( \frac{1-1/\theta _{i}}{1-1/\psi }\right) (s-\varepsilon )+\frac{%
\log q}{(\psi -1)\log 2}.
\end{equation*}%
Since $\theta _{i}\rightarrow \psi \in (0,1)$ as $i\rightarrow \infty ,$ 
\begin{equation*}
t_{i}\rightarrow s-\varepsilon -\frac{\log q}{(1-\psi )\log 2}.
\end{equation*}%
As $q>1$ and $\varepsilon >0$ are arbitrary, we again deduce that $%
\dimAscotpsi\geq s$ and that gives the desired result.

\vskip.5em Case 2: Now suppose $\psi =0$. We will make use of Claim~\ref%
{thm:numTheo} and choose $\beta \in (0,\theta )$ such that $\log \theta
/\log \beta $ is irrational. Suppose for a contraction that 
\begin{equation*}
\max \{\dimAscottheta,\dimAscotbeta\}\leq s-3\varepsilon \text{.}
\end{equation*}%
For all small enough $R$ and $x\in \supp\mu $ we have for $\gamma =\theta
,\beta ,$%
\begin{equation}
\frac{\mu (B(x_{i},R_{i}))}{\mu (B(x_{i},R_{i}^{1/\gamma }))}\leq \left( 
\frac{R_{i}}{R_{i}^{1/\gamma }}\right) ^{s-2\varepsilon }.  \label{basic}
\end{equation}%
Fix $\eta >0$ small, to be specified later. Choose $m,n\in \mathbb{N}$ as in %
\eqref{*} with this choice of $\eta $. By repeated application of (\ref%
{basic}) and a telescoping argument, we see that%
\begin{equation*}
\frac{\mu (B(x_{i},R_{i}))}{\mu (B(x_{i},R_{i}^{1/\theta ^{n}\beta ^{m}}))}%
\leq \left( \frac{R_{i}}{R_{i}^{1/\theta ^{n}\beta ^{m}}}\right)
^{s-2\varepsilon }.
\end{equation*}%
The second part of the claim yields that%
\begin{equation*}
\frac{1}{\theta _{i}}\left( 1-\eta /2\right) \geq \frac{1}{\theta ^{n}\beta
^{m}}
\end{equation*}%
for all $i$ sufficiently large. Thus $R_{i}^{1/\theta _{i}}\leq \left(
R_{i}^{1/(\theta ^{n}\beta ^{m})}\right) ^{1/(1-\eta /2)}$. It follows that
if $d=\dimqA\mu $, $\varepsilon >0$ and $\eta $ is sufficiently small, there
is a constant $c$ such that 
\begin{equation*}
\frac{\mu (B(x_{i},R_{i}^{1/\theta ^{n}\beta ^{m}}))}{\mu
(B(x_{i},R_{i}^{1/\theta _{i}}))}\;\leq \;c\left( \frac{R_{i}^{1/\theta
^{n}\beta ^{m}}}{R_{i}^{1/\theta _{i}}}\right) ^{d+\varepsilon }.
\end{equation*}%
Thus%
\begin{eqnarray*}
\left( \frac{R_{i}}{R_{i}^{1/\theta _{i}}}\right) ^{s-\varepsilon } &\leq &%
\frac{\mu (B(x_{i},R_{i}))}{\mu (B(x_{i},R_{i}^{1/\theta _{i}}))}\quad \leq
\quad \frac{\mu (B(x_{i},R_{i}))}{\mu (B(x_{i},R_{i}^{1/\theta ^{n}\beta
^{m}}))}\frac{\mu (B(x_{i},R_{i}^{1/\theta ^{n}\beta ^{m}}))}{\mu
(B(x_{i},R_{i}^{1/\theta _{i}}))} \\
&\leq &c\left( \frac{R_{i}}{R_{i}^{1/\theta ^{n}\beta ^{m}}}\right)
^{s-2\varepsilon }\left( \frac{R_{i}^{1/\theta ^{n}\beta ^{m}}}{%
R_{i}^{1/\theta _{i}}}\right) ^{d+\varepsilon },
\end{eqnarray*}%
and this implies that if we put 
\begin{eqnarray*}
t_{i} &=&(\frac{1}{\theta _{i}}-1)(s-\varepsilon )+(1-\frac{1}{\theta
^{n}\beta ^{m}})(s-2\varepsilon )+(\frac{1}{\theta ^{n}\beta ^{m}}-\frac{1}{%
\theta _{i}})(d+\varepsilon ) \\
&=&-\varepsilon +(s-\varepsilon )(\frac{1}{\theta _{i}}-\frac{1}{\theta
^{n}\beta ^{m}})+\frac{\varepsilon }{\theta ^{n}\beta ^{m}}+(\frac{1}{\theta
^{n}\beta ^{m}}-\frac{1}{\theta _{i}})(d+\varepsilon ),
\end{eqnarray*}%
then 
\begin{equation}
cR_{i}^{t_{i}}\geq 1\;\text{ for all large }i.  \label{bd}
\end{equation}%
Using the bounds from \eqref{*} we deduce that for small enough $\eta ,$ 
\begin{eqnarray*}
t_{i} &\geq &-\varepsilon +(s-\varepsilon )\eta /(2\theta _{i})+\varepsilon
(1-4\eta )/\theta _{i}-4\eta (d+\varepsilon )/\theta _{i} \\
&=&-\varepsilon +\frac{1}{\theta _{i}}((s-\varepsilon )\eta /2+\varepsilon
(1-4\eta )-4\eta (d+\varepsilon ))\geq -\varepsilon +\varepsilon /(2\theta
_{i})\rightarrow \infty
\end{eqnarray*}%
as $i\rightarrow \infty $. But that means $R_{i}^{t_{i}}\rightarrow 0$ and
hence \eqref{bd} cannot be satisfied for all large $i$ (whatever the choice
of constant $c$). This proves the result for the upper Assouad spectrum.

\vskip1em We now turn to the proof for the lower Assouad spectrum. Let $s=%
\dimLcantheta\mu \leq \dimqA\mu <\infty $. Fix $\varepsilon >0$ and obtain $%
x_{i}\in \supp\mu ,R_{i}\rightarrow 0$ and $r_{i}=R_{i}^{1/\theta _{i}}\leq
R_{i}$, with $\theta _{i}\leq \theta $ and%
\begin{equation*}
\frac{\mu (B(x_{i},R_{i}))}{\mu (B(x_{i},R_{i}^{1/\theta _{i}}))}\leq \left( 
\frac{R_{i}}{R_{i}^{1/\theta _{i}}}\right) ^{s+\varepsilon }.
\end{equation*}%
As before, without loss of generality we can assume $\theta _{i}\rightarrow
\psi $ where $\psi \in \lbrack 0,\theta ]$ and that the convergence is
monotonic.

\vskip.5em Case 1: We will first assume $\psi >0$. If $(\theta _{i})$ is an
increasing sequence (so $R_{i}^{1/\theta _{i}}\leq R_{i}^{1/\psi }$), then,
similar to the first step in the upper Assouad spectrum argument, we have%
\begin{equation*}
\frac{\mu (B(x_{i},R_{i}))}{\mu (B(x_{i},R_{i}^{1/\psi }))}\leq \frac{\mu
(B(x_{i},R_{i}))}{\mu (B(x_{i},R_{i}^{1/\theta _{i}}))}\leq \left( \frac{%
R_{i}}{R_{i}^{1/\theta _{i}}}\right) ^{s+\varepsilon }\leq \left( \frac{R_{i}%
}{R_{i}^{1/\psi }}\right) ^{s+2\varepsilon }
\end{equation*}%
for large enough $i$. That implies $\dimLscotpsi\mu \leq s+2\varepsilon $
and hence $\inf_{0<\psi \leq \theta }\dimLscotpsi\mu =s.$

Now suppose $(\theta _{i})$ decreases to $\psi $. As each $\theta _{i}\leq
\theta <1,$ the same is true for $\psi $ and $R_{1}^{1/\theta _{i}}\geq
R_{i}^{1/\psi }$. In a similar fashion to the second step in case 1 above,
we choose $n_{i}$ so that $2^{-(n_{i}+1)}<R_{i}^{1/\theta _{i}}\leq
2^{-n_{i}}$ and define $g$ by $g(n)=R_{i}^{1/\psi }2^{n_{i}}$ if $n_{i}\leq
n<n_{i+1}$. As before, one can easily check that $\log g(n)/n\rightarrow 0,$
hence the fact that $\dimqA\mu <\infty $ implies that for any fixed $q>1$
and suitable constant $c$ we have%
\begin{equation*}
\frac{\mu (B(x_{i},R_{i}))}{\mu (B(x_{i},R_{i}^{1/\psi }))}\leq
cq^{n_{i}}\left( \frac{R_{i}}{R_{i}^{1/\theta _{i}}}\right) ^{s+\varepsilon
}\leq cR_{i}^{-\log q/(\theta _{i}\log 2)}R_{i}^{(1-1/\theta
_{i})(s+\varepsilon )}=R_{i}^{(1-1/\psi )t_{i}}
\end{equation*}%
for 
\begin{equation*}
t_{i}=\frac{-\log q}{\theta _{i}\log 2(1-1/\psi )}+\frac{(1-1/\theta
_{i})(s+\varepsilon )}{1-1/\psi }\rightarrow \frac{\log q}{\log 2(1-\psi )}%
+s+\varepsilon .
\end{equation*}%
Since $q>1$ and $\varepsilon >0$ are arbitrary, we deduce that $\inf_{0<\psi
\leq \theta }\dimLscotpsi\mu =s.$

\vskip.5em Case 2: Now suppose $\psi =0$. Choose $0<\beta <\theta <1$ with $%
\log \theta /\log \beta \notin \mathbb{Q}$ and suppose for a contradiction
that 
\begin{equation*}
\min \{\dimLscottheta\mu ,\dimLscotbeta\mu \}\geq s+3\varepsilon \text{.}
\end{equation*}%
Then for all small enough $R$, $x\in \supp\mu $ and $\gamma =\theta ,\beta $
we have%
\begin{equation*}
\frac{\mu (B(x,R))}{\mu (B(x,R^{1/\gamma }))}\geq R^{(1-1/\gamma
)(s+2\varepsilon )}
\end{equation*}%
Fix $\eta >0$ small, to be specified later and choose $m,n\in \mathbb{N}$ as
in \eqref{*} with this choice of $\eta $. A telescoping argument gives%
\begin{equation*}
\frac{\mu (B(x_{i},R_{i}))}{\mu (B(x_{i},R_{i}^{1/\theta ^{n}\beta ^{m}}))}%
\geq \left( \frac{R_{i}}{R_{i}^{1/\theta ^{n}\beta ^{m}}}\right)
^{s+2\varepsilon }.
\end{equation*}%
Since $1/\theta _{i}\geq 1/(\theta ^{n}\beta ^{m}),R^{1/\theta _{i}}\leq
R_{i}^{1/\theta ^{n}\beta ^{m}}$ and therefore%
\begin{equation*}
\left( \frac{R_{i}}{R_{i}^{1/\theta _{i}}}\right) ^{s+\varepsilon }\;\geq \;%
\frac{\mu (B(x_{i},R_{i}))}{\mu (B(x_{i},R_{i}^{1/\theta _{i}}))}\;\geq \;%
\frac{\mu (B(x_{i},R_{i}))}{\mu (B(x_{i},R_{i}^{1/\theta ^{n}\beta ^{m}}))}%
\;\geq \;\left( \frac{R_{i}}{R_{i}^{1/\theta ^{n}\beta ^{m}}}\right)
^{s+2\varepsilon }.
\end{equation*}%
Equivalently, $1\geq cR_{i}^{t_{i}}$ for all large $i$ where 
\begin{equation*}
t_{i}=\left( \frac{1}{\theta _{i}}-1\right) (s+\varepsilon )+\left( 1-\frac{1%
}{\theta ^{n}\beta ^{m}}\right) (s+2\varepsilon ).
\end{equation*}%
But the properties of $\theta $ and $\beta $ ensure that for small enough $%
\eta ,t_{i}\leq \varepsilon -\varepsilon /(2\theta _{i})\rightarrow -\infty $
and that's a contradiction.

This completes the proof of the lower Assouad spectrum result and thus part
(i).
\end{proof}

\begin{lemma}
Assume $\dimqA\mu <\infty $. Then for each $\theta \in (0,1)$ the functions $%
\theta \mapsto\dimLscottheta\mu $ and $\theta\mapsto\dimAscottheta\mu $ are
continuous.
\end{lemma}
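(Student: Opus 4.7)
The plan is to prove continuity of $\theta\mapsto s(\theta):=\dimAscottheta\mu$ at an arbitrary $\theta_0\in(0,1)$; the argument for $\theta\mapsto\dimLscottheta\mu$ is entirely symmetric, obtained by interchanging suprema and infima and reversing inequalities. Write $s_0=s(\theta_0)$ and $d=\dimqA\mu$. The main tool is the quasi-doubling estimate implied by $d<\infty$: since $\overline H$ is non-increasing and $d=\lim_{\delta\downarrow 0}\overline H(\delta)$, one has $\overline H(\delta)\le d$ for every $\delta>0$, so for any $\delta,\eta>0$ there is $c=c(\delta,\eta)>0$ with
\begin{equation*}
\frac{\mu(B(x,R))}{\mu(B(x,r))}\le c\left(\frac{R}{r}\right)^{d+\eta}\quad\text{whenever }0<r\le R^{1+\delta}\text{ and }R\text{ is small.}
\end{equation*}
Crucially, $c(\delta,\eta)$ is allowed to blow up as $\delta\downarrow 0$; this is harmless since the spectrum depends only on polynomial exponents, not on multiplicative constants.

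For upper semicontinuity at $\theta_0$, fix $\varepsilon>0$ and start from $\mu(B(x,R))/\mu(B(x,R^{1/\theta_0}))\le c_0 R^{(1-1/\theta_0)(s_0+\varepsilon)}$. When $\theta>\theta_0$ the inclusion $B(x,R^{1/\theta})\supseteq B(x,R^{1/\theta_0})$ converts this directly into $\mu(B(x,R))/\mu(B(x,R^{1/\theta}))\le c_0R^{(1-1/\theta)t(\theta)}$ with $t(\theta)=\tfrac{1-1/\theta_0}{1-1/\theta}(s_0+\varepsilon)\to s_0+\varepsilon$ as $\theta\downarrow\theta_0$. When $\theta<\theta_0$ I telescope
\begin{equation*}
\frac{\mu(B(x,R))}{\mu(B(x,R^{1/\theta}))}=\frac{\mu(B(x,R))}{\mu(B(x,R^{1/\theta_0}))}\cdot\frac{\mu(B(x,R^{1/\theta_0}))}{\mu(B(x,R^{1/\theta}))}
\end{equation*}
and apply the quasi-doubling estimate with parameter $\delta=\theta_0/\theta-1>0$ to the second factor, obtaining an overall bound with exponent $(1-1/\theta)u(\theta)$ where direct computation shows $u(\theta)\to s_0+\varepsilon$ as $\theta\uparrow\theta_0$. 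In either case $s(\theta)$ is dominated by this exponent coefficient, and letting $\varepsilon,\eta\downarrow 0$ yields $\limsup_{\theta\to\theta_0}s(\theta)\le s_0$.

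For lower semicontinuity, assume for contradiction that a sequence $\theta_j\to\theta_0$ satisfies $s(\theta_j)<s_0-3\varepsilon$; then for each $j$ there is $c_j$ with $\mu(B(x,R))/\mu(B(x,R^{1/\theta_j}))\le c_j R^{(1-1/\theta_j)(s_0-2\varepsilon)}$. When $\theta_j<\theta_0$, the inclusion $B(x,R^{1/\theta_j})\subseteq B(x,R^{1/\theta_0})$ upgrades this directly to a bound on $\mu(B(x,R))/\mu(B(x,R^{1/\theta_0}))$ whose exponent coefficient $v_j=\tfrac{1-1/\theta_j}{1-1/\theta_0}(s_0-2\varepsilon)$ tends to $s_0-2\varepsilon$; for large $j$, $v_j<s_0-\varepsilon$ forces $s(\theta_0)\le s_0-\varepsilon$, contradicting $s(\theta_0)=s_0$. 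When $\theta_j>\theta_0$, I telescope through $R^{1/\theta_j}$ and dominate the extra factor via the quasi-doubling estimate with $\delta=\theta_j/\theta_0-1>0$ to extract the same type of contradiction.

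The main obstacle is the blow-up of the constant $c(\delta,\eta)$ when $\delta=|\theta/\theta_0-1|$ is small, which would torpedo a naive argument requiring a uniform constant. Since the definition of $\dimAscottheta\mu$ quantifies only over an exponent, a $\theta$-dependent (possibly huge) constant suffices to conclude $s(\theta)\le t(\theta)$ for each individual $\theta$ near $\theta_0$, and the exponents converge to $s_0$ by the explicit rational computations above. The argument carries over to $\dimLscottheta\mu$ by reversing all inequalities in a completely parallel fashion.
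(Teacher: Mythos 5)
Your proof is correct and follows the same four-way case split and telescoping strategy as the paper's, but it replaces the paper's quasi-doubling machinery with a cleaner formulation. The paper proves continuity of $\dimLscottheta\mu$ (leaving $\dimAscottheta\mu$ to the reader) and, in the two cases where the inclusion $B(x,R^{1/\theta_j})\subseteq B(x,R^{1/\theta_0})$ goes the wrong way, invokes \cite[Proposition 4.2]{HHT} via a dyadic device: it picks $n_j$ with $2^{-(n_j+1)}<R_j^{1/\theta}\leq 2^{-n_j}$, defines $g(n)=R_j^{1/\theta_j}2^{n_j}$, checks $\log g(n)/n\to 0$, and thereby obtains, for each $q>1$, a single constant $C_q$ \emph{uniform over $j$} with $\mu(B(x_j,R_j^{1/\theta}))\leq C_q R_j^{-\log q/(\theta\log 2)}\mu(B(x_j,R_j^{1/\theta_j}))$; the contradiction is then extracted along the whole sequence $R_j\to 0$. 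You instead read off directly from $\overline H(\delta)\leq\dimqA\mu$ that for any fixed $\delta,\eta>0$ there is $c(\delta,\eta)$ with $\mu(B(x,R))/\mu(B(x,r))\leq c(\delta,\eta)(R/r)^{d+\eta}$ whenever $r\leq R^{1+\delta}$, apply this with the \emph{fixed} gap $\delta_j=|\theta_j/\theta_0-1|>0$ for a single large $j$, and derive the contradiction $s(\theta_0)<s_0$ from that one $j$ alone, so the possible blow-up of $c(\delta_j,\eta)$ as $\delta_j\to 0$ is genuinely irrelevant. This sidesteps the $g(n)$ construction and the need for a constant uniform in $j$, which is a real simplification: the uniformity is essential elsewhere in the paper (Theorem 5.3, where $\theta_i$ varies with $R_i$ and the ratio of radii can be sub-polynomial) but is not needed for the continuity lemma, and your observation makes that explicit. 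One cosmetic difference: you split cleanly into upper and lower semicontinuity (the former direct, the latter by contradiction), whereas the paper phrases the whole thing as a contradiction; the content is the same. The claim that the argument for $\dimLscottheta\mu$ is obtained by reversing all inequalities does check out, since the quasi-doubling bound is used only to control a ratio of two nearby balls and inverts harmlessly.
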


\begin{proof}
We will give the complete proof for the continuity of the lower Assouad
spectrum and leave the analogous proof of the continuity of the upper
Assouad spectrum to the reader.

Fix $\theta \in (0,1)$ and let $t=\dimLscottheta\mu $. We proceed by
contradiction. If $\theta \mapsto\dimLscottheta\mu $ is not continuous at $%
\theta ,$ then there is some $\varepsilon >0$ and a sequence $\theta
_{j}\rightarrow \theta $ such that $\vert \dimLscottheta\mu -\dimLscotthetaj%
\mu \vert \geq 3\varepsilon $ for all $j$.

Suppose that there is a subsequence such that $\theta _{j}\rightarrow \theta 
$ and $\dimLscotthetaj\mu \geq t+3\varepsilon $ for all $j$. Then for each $%
j $ there is some $R(j)>0$ such that for all $R\leq R(j)$ and for each $x\in %
\supp\mu $ we have%
\begin{equation}
\frac{\mu (B(x,R))}{\mu (B(x,R^{1/\theta _{j}}))}\geq R^{(1-1/\theta
_{j})(t+2\varepsilon )}.  \label{C1}
\end{equation}

If a further subsequence satisfies $\theta _{j}\geq \theta $ for all $j$,
then fix small $\delta >0$ and choose $j$ such that $\left\vert 1-1/\theta
_{j}\right\vert \geq (1-\delta )\left\vert 1-1/\theta \right\vert $. Since $%
R^{1/\theta _{j}}\geq R^{1/\theta },$ we have%
\begin{equation*}
\frac{\mu (B(x,R))}{\mu (B(x,R^{1/\theta }))}\geq \frac{\mu (B(x,R))}{\mu
(B(x,R^{1/\theta _{j}}))}\geq R^{(1-1/\theta _{j})(t+2\varepsilon )}\geq
R^{(1-1/\theta )(t+2\varepsilon )(1-\delta )}\geq R^{(1-1/\theta
)(t+\varepsilon )},
\end{equation*}%
for all $x$ and $R\leq R(j),$ provided we choose $\delta $ small enough, and
that contradicts the assumption that $t=\dimLscottheta\mu $.

So assume that $\theta _{j}\leq \theta $ for all $j$. Since $t=\dimLscottheta%
\mu ,$ we can choose a sequence $x_{j}$ and $R_{j}\leq R(j),$ $%
R_{j}\rightarrow 0$, such that 
\begin{equation}
\frac{\mu (B(x_{j},R_{j}))}{\mu (B(x_{j},R_{j}^{1/\theta }))}\leq
R_{j}^{(1-1/\theta )(t+\varepsilon /4)}.  \label{C3}
\end{equation}%
By passing to a further subsequence, if necessary, we can assume there is a
sequence of integers, $(n_{j}),$ strictly increasing to infinity, such that 
\begin{equation*}
2^{-(n_{j}+1)}<R_{j}^{1/\theta }\leq 2^{-n_{j}}.
\end{equation*}%
Define 
\begin{equation*}
g(n)=R_{j}^{1/\theta _{j}}2^{n_{j}}\text{ if }n\in \lbrack n_{j},n_{j+1}).
\end{equation*}%
As in the proof of the first part of theorem, $\log g(n)/n\rightarrow 0,$
thus by the quasi-doubling property of $\mu ,$ (the assumption that $\dimqA%
\mu <\infty )$ for each $q>1$ there is a constant $C_{q}$ such that%
\begin{eqnarray*}
\mu (B(x_{j},R_{j}^{1/\theta })) &\leq &\mu (B(x_{j},2^{-n_{j}}))\leq
C_{q}q^{n_{j}}\mu (B(x_{j},g(n_{j})2^{-n_{j}}))=C_{q}q^{n_{j}}\mu
(B(x_{j},R_{j}^{1/\theta _{j}})) \\
&\leq &C_{q}R_{j}^{-\frac{\log q}{\theta \log 2}}\mu
(B(x_{j},R_{j}^{1/\theta _{j}})).
\end{eqnarray*}%
Choose $q>1$ such that $\log q/(\theta \log 2)\leq (1/\theta -1)\varepsilon
/4$. With this fixed choice of $q$ we see that%
\begin{equation*}
\mu (B(x_{j},R_{j}^{1/\theta }))\leq C_{q}R_{j}^{(1-1/\theta )\varepsilon
/4}\mu (B(x_{j},R_{j}^{1/\theta _{j}})).
\end{equation*}%
Together with (\ref{C3}) we deduce that%
\begin{eqnarray}
R_{j}^{(1-1/\theta )(t+\varepsilon /4)} &\geq &\frac{\mu (B(x_{j},R_{j}))}{%
\mu (B(x_{j},R_{j}^{1/\theta }))}=\frac{\mu (B(x_{j},R_{j}))}{\mu
(B(x_{j},R_{j}^{1/\theta _{j}}))}\frac{\mu (B(x_{j},R_{j}^{1/\theta _{j}}))}{%
\mu (B(x_{j},R_{j}^{1/\theta }))}  \label{C2} \\
&\geq &\frac{\mu (B(x_{j},R_{j}))}{\mu (B(x_{j},R_{j}^{1/\theta _{j}}))}%
C_{q}^{-1}R_{j}^{-(1-1/\theta )\varepsilon /4}.  \notag
\end{eqnarray}%
Now choose $j_{1}$ such that for all $j\geq j_{1},$ 
\begin{equation*}
\frac{1-1/\theta _{j}}{1-1/\theta }(t+2\varepsilon )\geq t+\varepsilon .
\end{equation*}%
Combining (\ref{C1}) and (\ref{C2}) gives%
\begin{equation*}
R_{j}^{(1-1/\theta )(t+\varepsilon )}\leq R_{j}^{(1-1/\theta
_{j})(t+2\varepsilon )}\leq \frac{\mu (B(x_{j},R_{j}))}{\mu
(B(x_{j},R_{j}^{1/\theta _{j}}))}\leq C_{q}R_{j}^{(1-1/\theta
)(t+\varepsilon /2)}
\end{equation*}%
for all $j\geq j_{1}$. But since $C_{q}$ is fixed, the outer most
inequalities clearly cannot hold for all $R_{j}\rightarrow 0.$ This
contradiction shows that we cannot have a sequence $\theta _{j}\rightarrow
\theta $ such that $\dimLscotthetaj\mu \geq t+3\varepsilon $ for all $j$.

Otherwise, there must be a subsequence such that $\theta _{j}\rightarrow
\theta $ and $\dimLscotthetaj\mu \leq t-3\varepsilon $ for all $j$. In this
case, there must be $x_{j}\in \supp
\mu $ and a decreasing sequence $R_{j}\rightarrow 0$ such that%
\begin{equation}
\frac{\mu (B(x_{j},R_{j}))}{\mu (B(x_{j},R_{j}^{1/\theta _{j}}))}\leq
R_{j}^{(1-1/\theta _{j})(t-2\varepsilon )}  \label{C6}
\end{equation}%
$\ $for all $j$.

If $\theta _{j}\leq \theta ,$ then as $R_{j}^{1/\theta _{j}}\leq
R_{j}^{1/\theta },$ 
\begin{equation*}
\frac{\mu (B(x_{j},R_{j}))}{\mu (B(x_{j},R_{j}^{1/\theta }))}\leq \frac{\mu
(B(x_{j},R_{j}))}{\mu (B(x_{j},R_{j}^{1/\theta _{j}}))}\leq
R_{j}^{(1-1/\theta _{j})(t-2\varepsilon )}\leq R_{j}^{(1-1/\theta
)(t-\varepsilon )}
\end{equation*}%
for $j$ sufficiently large ($R_{j}$ small) and that contradicts the
assumption that $t=\dimLscottheta\mu $.

So we may assume $\theta _{j}\geq \theta $. The arguments are similar to the
case $\theta _{j}\leq \theta $ above. Without loss of generality, there is a
strictly increasing sequence $(n_{j})$ satisfying 
\begin{equation*}
2^{-(n_{j}+1)}<R_{j}^{1/\theta _{j}}\leq 2^{-n_{j}}.
\end{equation*}%
Put 
\begin{equation*}
g(n)=R_{j}^{1/\theta }2^{n_{j}}\text{ if }n\in \lbrack n_{j},n_{j+1}).
\end{equation*}%
The quasi-doubling property of $\mu $ ensures that for each $q>1$ there is a
constant $c_{q}>0$ such that%
\begin{eqnarray*}
\mu (B(x_{j},R_{j}^{1/\theta })) &=&\mu (B(x_{j},g(n_{j})2^{-n_{j}}))\geq
c_{q}q^{-n_{j}}\mu (B(x_{j},2^{-n_{j}})) \\
&\geq &c_{q}q^{-n_{j}}\mu (B(x_{j},R_{j}^{1/\theta _{j}}))\geq c_{q}R_{j}^{%
\frac{\log q}{\theta _{j}\log 2}}\mu (B(x_{j},R_{j}^{1/\theta _{j}})).
\end{eqnarray*}%
Hence from (\ref{C6}),%
\begin{equation*}
R_{j}^{(1-1/\theta _{j})(t-2\varepsilon )}\geq \frac{\mu (B(x_{j},R_{j}))}{%
\mu (B(x_{j},R_{j}^{1/\theta _{j}}))}=\frac{\mu (B(x_{j},R_{j}))}{\mu
(B(x_{j},R_{j}^{1/\theta }))}\frac{\mu (B(x_{j},R_{j}^{1/\theta }))}{\mu
(B(x_{j},R_{j}^{1/\theta _{j}}))}\geq c_{q}R_{j}^{\frac{\log q}{\theta
_{j}\log 2}}\frac{\mu (B(x_{j},R_{j}))}{\mu (B(x_{j},R_{j}^{1/\theta }))},
\end{equation*}%
so that%
\begin{equation*}
\frac{\mu (B(x_{j},R_{j}))}{\mu (B(x_{j},R_{j}^{1/\theta }))}\leq
c_{q}^{-1}R_{j}^{(1-1/\theta _{j})(t-2\varepsilon )}R_{j}^{\frac{-\log q}{%
\theta _{j}\log 2}}=c_{q}^{-1}R_{j}^{(1-1/\theta )\left( (t-2\varepsilon
)\left( \frac{1-1/\theta _{j}}{1-1/\theta }\right) -\frac{\log q}{\theta
_{j}(1-1/\theta )\log 2}\right) }.
\end{equation*}%
Choose $j_{2}$ such that for all $j\geq j_{2}$ we have $(t-2\varepsilon
)\left( 1-1/\theta _{j}\right) /(1-1/\theta )\leq t-\varepsilon $ and $%
\left\vert \theta _{j}(1-1/\theta )\right\vert \geq \left\vert \theta
-1\right\vert /2$ and then choose $q$ sufficiently close to $1$ so that $%
2\log q/(\left\vert \theta -1\right\vert \log 2)\leq \varepsilon /2$. We
conclude that for $j\geq j_{2},$%
\begin{equation*}
\frac{\mu (B(x_{j},R_{j}))}{\mu (B(x_{j},R_{j}^{1/\theta }))}\leq
c_{q}^{-1}R_{j}^{(1-1/\theta )(t-\varepsilon /2)}
\end{equation*}%
and, again, this contradicts the assumption that $t=\dimLscottheta\mu $.

That completes the proof for the continuity of the lower Assouad spectrum.
\end{proof}

We are now ready to complete the proof of the theorem.

\begin{proof}[Proof of Theorem \protect\ref{thm:spectrum}]
(ii).$\;\;$ It follows directly from the first part of the theorem that 
\begin{equation*}
\liminf_{\theta \rightarrow 1}\dimLscottheta\mu =\dimqL\mu \text{ and }%
\limsup_{\theta \rightarrow 1}\dimAscottheta\mu =\dimqA\mu .
\end{equation*}
Furthermore, an immediate consequence of the factorization%
\begin{equation*}
\frac{\mu (B(x,R))}{\mu (B(x,R^{1/\theta }))}=\frac{\mu (B(x,R))}{\mu
(B(x,R^{1/\theta ^{1/n}}))}\frac{\mu (B(x,R^{1/\theta ^{1/n}}))}{\mu
(B(x,R^{1/\theta ^{2/n}}))}\cdot \cdot \cdot \frac{\mu (B(x,R^{1/\theta
^{(n-1)/n}}))}{\mu (B(x,R^{1/\theta }))}
\end{equation*}%
is that $\underline{\dim}_{\,A}^{\,=\theta^{1/n}}\mu \leq \dimLscottheta\mu $
and $\overline{\dim} _{A}^{=\theta ^{1/n}}\mu \geq \dimAscottheta\mu $ for
all $n\in \mathbb{N}$.

These observations, together with the continuity result of the previous
lemma, allow one to use the same argument as given directly after Lemma 3.1
in \cite{FHHTY} to show that $\liminf_{\theta \rightarrow 1}\dimLscottheta%
\mu =\lim_{\theta \rightarrow 1}\dimLscottheta\mu $ and similarly for the
upper Assouad spectrum.

That completes the proof of Theorem \ref{thm:spectrum}.
\end{proof}

We can use Theorem~\ref{thm:spectrum} to state an analogue of the notion of
uniformly perfect for the quasi-lower Assouad dimension.

\begin{corollary}
Suppose $\mu $ is doubling. If there exists $t>0$ such that for every $%
\delta <0,$ all $x\in \supp\mu $ and all sufficiently small $R,$

\begin{equation*}
\mu (B(x,R)\setminus B(x,R^{1+\delta }))\geq (1-R^{\delta t})\mu (B(x,R)),
\end{equation*}%
then $\dimqL\mu \geq t$.
\end{corollary}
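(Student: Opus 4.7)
The plan is to deduce the result directly from Theorem~\ref{thm:spectrum}(ii) by translating the hypothesis into a uniform lower bound on $\dimLscottheta\mu$ for every $\theta\in(0,1)$. First I would rewrite the assumption: using $\mu(B(x,R)\setminus B(x,R^{1+\delta})) = \mu(B(x,R)) - \mu(B(x,R^{1+\delta}))$, the hypothesis is equivalent to
\[
\mu\bigl(B(x,R^{1+\delta})\bigr) \;\leq\; R^{\delta t}\,\mu\bigl(B(x,R)\bigr).
\]
Reparametrizing by $\theta = 1/(1+\delta) \in (0,1)$ gives $R^{1/\theta} = R^{1+\delta}$ and $R^{-\delta t} = \bigl(R^{1-1/\theta}\bigr)^{t}$, so the inequality becomes
\[
\frac{\mu(B(x,R))}{\mu(B(x,R^{1/\theta}))} \;\geq\; \bigl(R^{1-1/\theta}\bigr)^{t}
\]
for all $x\in\supp\mu$ and all sufficiently small $R$, say $R \leq R_0(\theta)$.

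For each fixed $\theta$, the remaining range $R\in[R_0(\theta),1]$ can be absorbed into the constant $c$ in the definition of $\dimLscottheta\mu$: the ratio is always at least $1$, while $\bigl(R^{1-1/\theta}\bigr)^{t}$ is bounded above by $R_0(\theta)^{(1-1/\theta)t}$ on that interval, so taking $c=\min\{1,R_0(\theta)^{(1/\theta-1)t}\}$ makes the required bound hold for all $R\in(0,1]$ and all $x\in\supp\mu$. By the definition of the lower Assouad spectrum this yields $\dimLscottheta\mu \geq t$ for every $\theta\in(0,1)$.

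Finally, since $\mu$ is doubling, $\dimqA\mu<\infty$, so Theorem~\ref{thm:spectrum}(ii) applies and gives $\lim_{\theta\to 1}\dimLscottheta\mu=\dimqL\mu$. Combined with the uniform bound just established (valid in particular as $\delta\to 0^{+}$, i.e., $\theta\to 1^{-}$), this yields $\dimqL\mu\geq t$. There is no substantial obstacle here: the entire proof is a direct translation of the hypothesis into the language of the Assouad spectrum followed by an appeal to Theorem~\ref{thm:spectrum}(ii), with the only bookkeeping being the ``sufficiently small $R$'' caveat handled above.
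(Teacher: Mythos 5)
Your proof is correct and follows the same basic strategy as the paper's: translate the hypothesis into a lower bound $\dimLscottheta\mu \geq t$ on the lower Assouad spectrum for every $\theta\in(0,1)$, then use Theorem~\ref{thm:spectrum}(ii) (applicable since doubling implies $\dimqA\mu<\infty$) to conclude $\dimqL\mu\geq t$. In fact your write-up is cleaner than the printed one: the paper jumps straight to the claim $\dimLcantheta\mu\geq t$, but the hypothesis only controls the ratio at the single scale $r=R^{1+\delta}$, not uniformly over all $r\leq R^{1+\delta}$, so what it gives directly is the ``exact'' spectrum bound $\dimLscottheta\mu\geq t$, as you observe. Passing from that to $\dimqL\mu$ is exactly the content of Theorem~\ref{thm:spectrum}(ii), which is where the doubling hypothesis is genuinely used, and you invoke it in precisely the right place; your handling of the ``sufficiently small $R$'' threshold by absorbing the finite range into the constant is also the correct (and easily overlooked) bookkeeping step.
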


\begin{proof}
The hypothesis of the corollary is equivalent to the statement 
\begin{equation*}
\frac{\mu (B(x,R))}{\mu (B(x,R^{1+\delta }))}\geq R^{-\delta t}.
\end{equation*}%
Thus $\overline{H}(1/\delta-1) \geq t$ for all $\delta >0$ and hence $%
\dimLcantheta\mu\geq t$ for all $\theta>0$. As $\mu $ is doubling, $\dimqL%
\mu =\inf_{\theta >0}\dimLcantheta \mu \geq t$.
\end{proof}

\subsection*{Acknowledgements}

Both authors thank F. Mendivil and Acadia University for hosting a
sabbatical visit for K.H.\ and a short-term visit for S.T.\ when some of
this material was developed. We also thank K.G.~Hare for showing us Claim~%
\ref{thm:numTheo}.

\clearpage
\appendix

\section{Lower Spectrum for Sets}

In \cite{CWC} it was shown that $\dimLcantheta:=\underline{h}(1/\theta
-1)=\inf_{0\leq\psi\leq\theta}\dimLscotpsi$ under the assumption that the
metric space is doubling and uniformly perfect. In this section we will give
a shorter proof of this fact that does not require the uniformly perfect
assumption.

To begin, we note that if a metric space $X$ is doubling, then there is a
constant $c$ such that for all $x,r,R$ and subsets $E\subseteq X$, 
\begin{equation*}
N_{r}(B(x,R)\cap E)\leq N_{16r}(B(x,R)\cap E)\sup_{y}N_{r}(B(y,16r))\leq
cN_{16r}(B(x,r)\cap E).
\end{equation*}%
For any subset $F,$ let $M_{r}(F)$ be the maximal number of disjoint balls
of radius $r,$ centred in $F$. Since we have 
\begin{equation*}
N_{16r}(F)\leq M_{4r}(F)\leq N_{4r}(F)\leq M_{r}(F)\leq N_{r}(F)\leq
cN_{16r}(F),
\end{equation*}%
we can replace the covering numbers in the definition of the Assouad
spectrum and dimensions with packing numbers.

We will also require the following observation:

\begin{lemma}
\label{packing}For $0<r=r_{k}<r_{k-1}<\cdot \cdot \cdot <r_{1}<R$, 
\begin{equation*}
M_{r}(B(x,R))\geq
M_{r_{1}}(B(x,R-r_{1}))\inf_{y_{1}}M_{r_{2}}(B(y_{1},r_{1}-r_{2}))\cdot
\cdot \cdot \inf_{y_{k}}M_{r}(B(y_{1},r_{k-1}-r)).
\end{equation*}

\begin{proof}
Let $t_{1}=\inf_{y_{1}}M_{r_{2}}(B(y_{1},r_{1}-r_{2}))$ and suppose $%
\{B(x_{j},r_{1}):$ $j=1,\dots,J\}$ is a set of disjoint balls with centres
in $B(x,R-r_{1})$ (and thus contained in $B(x,R))$. There are at least $%
t_{1} $ disjoint balls centred in each $B(x_{j},r_{1}-r_{2})$ with radius $%
r_{2}$. These balls are each contained in the (disjoint) sets $%
B(x_{j},r_{1}),$ so that all $J\cdot t$ balls are disjoint. Hence if $k=2$ ($%
r_{2}=r$), then we have produced $J\cdot t_{1}$ disjoint balls of radius $r$
centred in $B(x,R)$ and that proves the result in this case. If $k>2$ we
repeat the construction.
\end{proof}
\end{lemma}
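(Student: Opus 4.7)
The plan is an induction on $k$, exploiting the basic geometric fact that disjoint packings of smaller balls nested inside disjoint packings of larger balls remain globally disjoint. The base case $k=1$ is immediate: $M_r(B(x,R)) \geq M_r(B(x, R-r))$, because any $r$-packing of the smaller ball is automatically an $r$-packing of the larger one.

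For the inductive step, I would choose a maximal disjoint family $\{B(x_j, r_1)\}_{j=1}^{J}$ of $r_1$-balls with centres in $B(x, R - r_1)$, where $J = M_{r_1}(B(x, R-r_1))$. The shrinkage in the region of admissible centres is precisely what guarantees each $B(x_j, r_1) \subseteq B(x, R)$, so these $J$ balls sit disjointly inside the outer ball. Next, apply the inductive hypothesis separately inside each $B(x_j, r_1)$ using the remaining scales $r_2 > r_3 > \cdots > r_k = r$; this yields at least $\prod_{i=2}^{k} \inf_{y} M_{r_i}(B(y, r_{i-1} - r_i))$ disjoint $r$-balls inside each $B(x_j, r_1)$. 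Multiplying by $J$ and invoking the disjointness of the parent balls produces the full product on the right-hand side.

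The point to keep straight -- and the reason for the quantity $r_{i-1} - r_i$ appearing in each infimum -- is that to force an $r_i$-ball to lie inside a parent $r_{i-1}$-ball, one must require the centre to lie in the concentric $(r_{i-1} - r_i)$-ball. Beyond this bookkeeping, there is no substantive obstacle: the argument is essentially just iterating the one-step observation that a disjoint family in a concentric subball of the parent produces well-separated children inside the parent.
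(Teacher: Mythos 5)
Your proposal uses the same nested-packing idea as the paper, but there is a technical gap in the inductive step as you have formulated it. When you apply the inductive hypothesis inside $B(x_j,r_1)$, it gives a lower bound on $M_r(B(x_j,r_1))$, which counts disjoint $r$-balls \emph{centred} in $B(x_j,r_1)$; such balls are allowed to poke outside $B(x_j,r_1)$. You then assert they are ``inside each $B(x_j,r_1)$'' and invoke disjointness of the parents to conclude global disjointness, but two $r$-balls centred near the boundaries of two neighbouring (disjoint) parents $B(x_j,r_1)$ and $B(x_{j'},r_1)$ can perfectly well intersect. So the cross-parent disjointness you need does not follow from the inductive hypothesis in the form you have stated it.

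The fix is minor and you clearly already have the right geometric picture: the $r$-balls the construction produces really are contained in their parents, because at every stage the admissible centres are restricted to the concentric shrinkage $B(\cdot,\,r_{i-1}-r_i)$. To make the induction airtight, strengthen the statement being proved so that it asserts the existence of at least the right-hand-side many pairwise disjoint $r$-balls \emph{contained} in $B(x,R)$ (equivalently, centred in $B(x,R-r)$), and verify that the base case and the nesting preserve this containment. The paper sidesteps this bookkeeping by not invoking the lemma's statement recursively; it simply repeats the one-step construction $k-1$ times, tracking containment explicitly at each level.
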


\begin{theorem}
Let $E$ be a doubling metric space. Then for any $\theta \in (0,1)$, 
\begin{equation*}
\dimLcantheta E=\inf_{0<\psi \leq \theta }\dimLscotpsi E.
\end{equation*}
Further, 
\begin{equation}  \label{eq:limit}
\dimqL E = \lim_{\theta\to 1} \dimLscotpsi E.
\end{equation}
\end{theorem}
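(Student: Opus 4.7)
The proof has two parts. For (i), the inequality $\dimLcantheta E \leq \inf_{0<\psi\leq\theta}\dimLscotpsi E$ is immediate: for $\psi \leq \theta$ and $R < 1$, $R^{1/\psi} \leq R^{1/\theta}$, so the uniform lower packing bound required by $\dimLcantheta$ specialises at $r = R^{1/\psi}$ to exactly the condition defining $\dimLscotpsi$.

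For the reverse, set $s = \inf_{\psi\leq\theta}\dimLscotpsi E$ (assume $s > 0$; otherwise trivial), fix $\epsilon \in (0, s)$, and aim to show $\dimLcantheta E \geq s - 2\epsilon$. Working with packing numbers $M_r$ (comparable to $N_r$ by doubling), the hypothesis $\dimLscottheta E \geq s$ yields constants $c_0 \in (0,1)$ and $R_0 > 0$ such that $M_{R^{1/\theta}}(B(y,R)\cap E) \geq c_0 (R/R^{1/\theta})^{s-\epsilon}$ for all $y \in E$ and $R \leq R_0$. Given $(R, r)$ with $R \leq R_0$ and $r \leq R^{1/\theta}$, I will build the geometric chain $r_0 = R$, $r_j = r_{j-1}^{1/\theta}$, and take $k$ minimal with $r_k \leq r$. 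Iterating the spectrum bound via Lemma~\ref{packing}, after absorbing the slack $r_{j-1} - r_j \geq r_{j-1}/2$ (valid for small $r_{j-1}$) into the doubling constant, gives
\begin{equation*}
M_r(B(x,R)\cap E) \;\geq\; M_{r_k}(B(x,R)\cap E) \;\geq\; c^k \prod_{j=1}^{k}(r_{j-1}/r_j)^{s-\epsilon} \;=\; c^k (R/r_k)^{s-\epsilon} \;\geq\; c^k (R/r)^{s-\epsilon},
\end{equation*}
for some $c \in (0,1)$ depending on $c_0$ and the doubling constant, with the last step using $r \geq r_k$. The crux is that $c^k$ is negligible: direct computation from $r_k = R^{1/\theta^k}$ gives $k \leq 1 + \log(|\log r|/|\log R|)/\log(1/\theta) = O(\log|\log r|)$, while the constraint $|\log r| \geq |\log R|/\theta$ yields $\log(R/r) \geq (1-\theta)|\log r|$. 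Hence $k/\log(R/r) \to 0$ uniformly in pairs with $r \leq R^{1/\theta}$, so for $R \leq R_1$ (some $R_1 \leq R_0$), $c^k \geq (R/r)^{-\epsilon}$, giving $M_r(B(x,R)\cap E) \geq (R/r)^{s-2\epsilon}$. Sending $\epsilon \to 0$ completes (i).

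For \eqref{eq:limit}, substituting $\delta = 1/\theta - 1$ in the definition of $\dimqL$ gives $\dimqL E = \lim_{\theta \to 1^-}\dimLcantheta E$; combining with (i) yields $\dimqL E = \inf_{0<\psi<1}\dimLscotpsi E$, hence $\dimqL E \leq \liminf_{\theta \to 1^-}\dimLscottheta E$. For the matching upper bound, a factorisation argument analogous to the proof of Theorem~\ref{thm:spectrum}(ii) applies: telescoping via Lemma~\ref{packing} through scales $r_j = R^{1/\theta^{j/n}}$ yields $\underline{\dim}_A^{\,=\theta^{1/n}} E \leq \dimLscottheta E + \epsilon$ for every $n \geq 1$, from which a continuity argument as in \cite{FHHTY} upgrades the $\liminf$ identity to $\lim_{\theta \to 1^-}\dimLscottheta E = \dimqL E$.

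The main obstacle is the uniform estimate $k/\log(R/r) \to 0$ in (i): the constraint $r \leq R^{1/\theta}$ forces $|\log r|\geq |\log R|/\theta$, making the logarithmic growth of $k$ negligible against the linear growth of $\log(R/r)$; care is still required in the regime where $|\log R|$ is bounded, handled by treating the corresponding compact family of $(R,r)$ pairs separately. A secondary technical point, the discrepancy between balls of radius $r_{j-1}$ and $r_{j-1} - r_j$ in Lemma~\ref{packing}, is absorbed via doubling.
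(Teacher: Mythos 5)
There is a genuine gap in your argument for part (i), located in the central chain of inequalities, and it is exactly the gap that the paper's Diophantine Lemma~\ref{thm:numTheo} is designed to close. You take $k$ minimal with $r_{k}\leq r$, so $r_{k}\leq r<r_{k-1}$, and then assert
\begin{equation*}
M_{r}(B(x,R)\cap E)\;\geq\;M_{r_{k}}(B(x,R)\cap E).
\end{equation*}
This is backwards: packing numbers are nonincreasing in the radius, so $r_{k}\leq r$ gives $M_{r}\leq M_{r_{k}}$. If you instead take $k$ maximal with $r_{k}\geq r$, this inequality becomes correct, but now $r_{k}\geq r$ reverses the last step $(R/r_{k})^{s-\epsilon}\geq(R/r)^{s-\epsilon}$ and you lose a factor $(r/r_{k})^{s-\epsilon}$. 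That factor is not harmless. The chain $r_{j}=R^{1/\theta^{j}}$ cannot land on an arbitrary $r\leq R^{1/\theta}$; the multiplicative gap between $r$ and the nearest chain scale can be as large as $r_{k-1}/r_{k}=r_{k-1}^{-(1-\theta)/\theta}$, which tends to infinity as $R\to 0$, and absorbing it by doubling at the end costs a factor $N_{r_{k}}(B(\cdot,r))\sim(r/r_{k})^{\dimA E}$ which is likewise unbounded. Equivalently, the final leftover step from $r_{k-1}$ to $r$ is a $\psi$-step for some $\psi\in[\theta,1)$, and the hypothesis $\inf_{0<\psi\leq\theta}\dimLscotpsi E=s$ gives no control on $\dimLscotpsi E$ for $\psi>\theta$. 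Your observation that $k/\log(R/r)\to 0$ uniformly is correct, but it only controls the constant $c^{k}$ and does nothing for this broken transfer.

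This leftover is precisely what the paper's proof is organised around. After passing to a subsequence with $\theta_{i}\to\psi$, the case $\psi>0$ is handled by monotonicity and (when $\theta_i$ decreases) by transferring the small discrepancy through the finiteness of $\dimA E$, where the error vanishes because $1/\theta_i-1/\psi\to 0$. The stubborn case $\psi=0$ is where a single-scale chain in powers of $\theta$ fails, and the paper introduces a second scale $\beta$ with $\log\theta/\log\beta\notin\mathbb{Q}$. Lemma~\ref{thm:numTheo} then yields integers $m,n$ with $\eta/(2\theta_{i})\leq 1/\theta_{i}-1/(\theta^{n}\beta^{m})\leq 4\eta/\theta_{i}$: the two-letter chain through $\theta^{n}\beta^{m}$ lands above $r_i$ with relative error $O(\eta)$, which can be absorbed by the $\epsilon$-slack in the exponent. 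Powers of a single $\theta$ are simply not dense enough (on a logarithmic scale) to make this work, so there is no way to repair your argument by a more careful choice of $k$.

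Your treatment of \eqref{eq:limit} --- factorisation through scales $R^{1/\theta^{j/n}}$ combined with the continuity of the lower Assouad spectrum from \cite{FY1} and the limiting argument of \cite{FHHTY} --- matches the paper and is fine once part (i) is in place.
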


\begin{remark}
The analogous result was proved for the Assouad dimension in \cite{FHHTY}
for subsets of $\mathbb{R}^{d},$ but the same proof applies in any doubling
metric space.
\end{remark}

\begin{proof}
Our argument is similar to the proof of the corresponding result for the
lower spectrum of measures. Let $s=\dimLcantheta E$. This dimension is
finite since the metric space $E$ is doubling and hence its upper Assouad
dimension is finite. Fix $\varepsilon >0$ and obtain $x_{i}\in E$, $%
R_{i}\rightarrow 0$ and $\theta _{i}\leq \theta $ such that 
\begin{equation}
M_{R^{1/\theta _{i}}}(B(x_{i},R_{i}))\leq R_{i}^{(1-1/\theta
_{i})(s+\varepsilon )}.  \label{LSpSet}
\end{equation}%
Without loss of generality, we can assume $\theta _{i}\rightarrow \psi $
where $\psi \in \lbrack 0,\theta ]$ and that the convergence is monotonic.

Case 1: We will first assume $\psi >0$. If $(\theta _{i})$ is an increasing
sequence, then $R^{1/\theta _{i}}\leq R^{1/\psi }$ and thus%
\begin{equation*}
N_{R^{1/\psi }}(B(x_{i},R_{i}))\leq N_{R^{1/\theta
_{i}}}(B(x_{i},R_{i}))\leq R_{i}^{(1-1/\theta _{i})(s+\varepsilon )}\leq
R_{i}^{(1-1/\psi )(s+\varepsilon /2)}\text{ for large }i.
\end{equation*}

Otherwise, $(\theta _{i})$ decreases to $\psi $. As each $\theta _{i}\leq
\theta <1,$ the same is true for $\psi $ and furthermore, $R_{1}^{1/\theta
_{i}}\geq R_{i}^{1/\psi }$. Let $D$ be the upper Assouad dimension of $E$.
For small enough $R_{i}$ we have%
\begin{equation*}
N_{R^{1/\psi }}(B(x_{i},R_{i}^{1/\theta _{i}}))\leq R_{i}^{(1/\theta
_{i}-1/\psi )(D+\varepsilon )},
\end{equation*}%
hence%
\begin{eqnarray*}
N_{R^{1/\psi }}(B(x_{i},R_{i})) &\leq &N_{R^{1/\theta
_{i}}}(B(x_{i},R_{i}))N_{R^{1/\psi }}(B(x_{i},R_{i}^{1/\theta _{i}})) \\
&\leq &R_{i}^{(1-1/\theta _{i})(s+\varepsilon )+(1/\theta _{i}-1/\psi
)(D+\varepsilon )}.
\end{eqnarray*}%
Since $\theta _{i}\rightarrow \psi $, and $\varepsilon >0$ was arbitrary, we
again deduce that $\dimAscotpsi E=s.$

Case 2: $\psi =0$. As in the proof of Theorem \ref{thm:spectrum}, choose $%
0<\beta <\theta <1$ with $\log \theta /\log \beta \notin \mathbb{Q}$ and
suppose for a contradiction that 
\begin{equation*}
\min \{\dimLscottheta(E),\dimLscotbeta(E)\}\geq s+3\varepsilon \text{.}
\end{equation*}%
This inequality implies that for all $x$, small enough $R$ and $\gamma
=\theta ,\beta ,$%
\begin{equation}
M_{R^{1/\gamma }}(B(x,R))\geq R^{(1-1/\gamma )(s+5\varepsilon /2)}
\label{**}
\end{equation}%
Let $\eta >0$ and choose $n,m$ as in Claim \ref{thm:numTheo}. Appealing to
Lemma \ref{packing} we see that 
\begin{equation}  \label{eq:*}
M_{R_{i}^{1/\theta ^{n}\beta ^{m}}}(B(x_{i},R_{i}))\geq M_{R_{i}^{1/\theta
}}(B(x_{i},R_{i}-R_{i}^{1/\theta })\cdot A_{i,n}\cdot B_{i,m}
\end{equation}%
where 
\begin{equation*}
A_{i,n}=\prod\limits_{k=1}^{n-1}\inf_{y_{k}}M_{R_{i}^{1/\theta
^{k+1}}}(B(y_{k},R_{i}^{1/\theta ^{k}}-R_{i}^{1/\theta ^{k+1}})
\end{equation*}%
and 
\begin{equation*}
B_{i,m}=\prod\limits_{k=1}^{m}\inf_{z_{k}}M_{R_{i}^{1/\theta ^{n}\beta
^{k}}}(B(z_{k},R_{i}^{1/\theta ^{n}\beta ^{k-1}}-R_{i}^{1/\theta ^{n}\beta
^{k}}).
\end{equation*}

The doubling condition combined with property \eqref{**} implies that there
is a constant $c>0$ such that whenever $1\leq \alpha \leq 2$, $\gamma
=\theta $ or $\beta $ and $R$ is small enough (depending only on $%
\varepsilon ,\gamma $ and $c$), then%
\begin{equation*}
M_{\alpha R^{1/\gamma }}(B(x,R))\geq cM_{R^{1/\gamma }}(B(x,R))\geq
cR^{(1-1/\gamma )(s+5\varepsilon /2)}\geq \left( \frac{R}{R^{1/\gamma }}%
\right) ^{s+2\varepsilon }.
\end{equation*}%
Since 
\begin{equation*}
R_{i}^{1/\theta ^{j-1}}-R_{i}^{1/\theta ^{j}}\leq R_{i}^{1/\theta ^{j}}\leq
2(R_{i}^{1/\theta ^{j-1}}-R_{i}^{1/\theta ^{j}})^{1/\theta }
\end{equation*}%
and 
\begin{equation*}
R_{i}^{1/\theta ^{n}\beta ^{j-1}}-R_{i}^{1/\theta ^{n}\beta ^{j}}\leq
R_{i}^{1/\theta ^{n}\beta ^{j}}\leq 2(R_{i}^{1/\theta ^{n}\beta
^{j-1}}-R_{i}^{1/\theta ^{n}\beta ^{j}})^{1/\beta },
\end{equation*}%
it follows that if we simplify the notation by putting 
\begin{equation*}
P_{i,j}=R_{i}^{1/\theta ^{j-1}}-R_{i}^{1/\theta ^{j}}\text{ and }%
Q_{i,j}=R_{i}^{1/\theta ^{n}\beta ^{j-1}}-R_{i}^{1/\theta ^{n}\beta ^{j}},
\end{equation*}%
then 
\begin{equation*}
M_{R_{i}^{1/\theta ^{n}\beta ^{m}}}(B(x_{i},R_{i}))\geq \left(
\prod\limits_{j=1}^{n}\frac{P_{i,j}}{P_{i,j}^{1/\theta }}\prod%
\limits_{j=1}^{m}\frac{Q_{i,j}}{Q_{i,j}^{1/\beta }}\right) ^{s+2\varepsilon
}.
\end{equation*}%
It is helpful to isolate the first term of the numerator together with the
last term of the denominator and then pair up the remaining terms giving the
expression%
\begin{eqnarray*}
&&M_{R_{i}^{1/\theta ^{n}\beta ^{m}}}(B(x_{i},R_{i})) \\
&\geq &\left( \frac{R_{i}-R_{i}^{1/\theta }}{(R_{i}^{1/\theta ^{n}\beta
^{m-1}}-R_{i}^{1/\theta ^{n}\beta ^{m}})^{1/\beta }}\prod\limits_{j=2}^{n}%
\frac{P_{i,j}}{P_{i,j-1}^{1/\theta }}\frac{R_{i}^{1/\theta
^{n}}-R_{i}^{1/\theta ^{n}\beta }}{(R_{i}^{1/\theta ^{n-1}}-R_{i}^{1/\theta
^{n}})^{1/\theta }}\prod\limits_{j=2}^{m}\frac{Q_{i,j}}{Q_{i,j-1}^{1/\beta }}%
\right) ^{s+2\varepsilon }.
\end{eqnarray*}%
Using a Taylor series expansion for $(1-x)^{1/\theta }$ for $x$ near $0,$
one can check that 
\begin{equation*}
\frac{P_{i,j}}{P_{i,j-1}^{1/\theta }},\frac{Q_{i,j}}{Q_{i,j-1}^{1/\beta }}%
\text{ and }\frac{R_{i}^{1/\theta ^{n}}-R_{i}^{1/\theta ^{n}\beta }}{%
(R_{i}^{1/\theta ^{n-1}}-R_{i}^{1/\theta ^{n}})^{1/\theta }}\geq 1.
\end{equation*}%
Hence we deduce that%
\begin{equation}  \label{eq:**}
M_{R_{i}^{1/\theta ^{n}\beta ^{m}}}(B(x_{i},R_{i}))\geq \left( \frac{%
R_{i}-R_{i}^{1/\theta }}{(R_{i}^{1/\theta ^{n}\beta ^{m-1}}-R_{i}^{1/\theta
^{n}\beta ^{m}})^{1/\beta }}\right) ^{s+2\varepsilon }\geq \left( \frac{R_{i}%
}{2R_{i}^{1/\theta ^{n}\beta ^{m}}}\right) ^{s+2\varepsilon }
\end{equation}
once $R_{i}$ is small enough. But since $R_{i}^{1/\theta _{i}}\leq
R_{i}^{1/\theta ^{n}\beta ^{m}}$ we have 
\begin{equation*}
M_{R_{i}^{1/\theta _{i}}}(B(x_{i},R_{i}))\geq M_{R_{i}^{1/\theta ^{n}\beta
^{m}}}(B(x_{i},R_{i})).
\end{equation*}%
Combining these observations with \eqref{LSpSet} gives the inequality%
\begin{equation*}
R_{i}^{(1-1/\theta _{i})(s+\varepsilon )}\geq cR_{i}^{(1-1/\theta ^{n}\beta
^{m})(s+2\varepsilon )}.
\end{equation*}%
But as we saw in the conclusion of the proof of Theorem \ref{thm:spectrum},
it is not possible for this inequality to be true for $R_{i}$ tending to
zero and that contradiction completes the first part of the proof.

To see that \eqref{eq:limit} holds, assume $\dimLscottheta = t$. Repeat the
argument starting at \eqref{eq:*} with $m=0$ to obtain 
\begin{equation*}
M_{R^{1/\theta^n}}(B(x,R)) \geq c \left( \frac{R}{R^{1/\theta^n}}
\right)^{t-\varepsilon}
\end{equation*}
as in \eqref{eq:**}, for all $x\in E$ and $R>0$ small enough. This shows $%
\underline{\dim}_{\,A}^{\,=\theta^n} E \geq t-\varepsilon$ for all $%
\varepsilon>0$ and so $\underline{\dim}_{\,A}^{={\theta}^n} E \geq %
\dimLscottheta E$. According to \cite[Theorem 3.10]{FY1} the function $%
\dimLscottheta E$ is continuous for $\theta\in(0,1)$. Following the argument
found in \cite[\S 3.2]{FHHTY}, $\lim_{\theta\to 1}\dimLscottheta E$ exists
and hence $\lim_{\theta\to 1}\dimLscottheta E= \liminf_{\theta\to 1}%
\dimLscottheta E =\inf_{\theta\in(0, 1)}\dimLscottheta E$.
\end{proof}

\end{document}